\documentclass{article}
\usepackage{amssymb, amsmath, amsthm, mathtools, bbm, tikz-cd, stmaryrd, enumerate,cite}

\newcommand{\TT}{{T\oplus T^*}}

\newcommand{\RR}{\mathbb{R}}

\newcommand{\FF}{\mathcal{F}}
\newcommand{\bl}{[\![}
\newcommand{\br}{]\!]}
\newcommand{\bbl}{\bl}
\newcommand{\bbr}{\br}
\newcommand{\brac}{[\ ,\ ]}
\newcommand{\bbrac}{\bbl\ ,\ \bbr}
\newcommand{\bracd}{\bl\ ,\ \br}
\newcommand{\la}{{\langle}}
\newcommand{\ra}{{\rangle}}
\newcommand{\Tp}{{T_+}}
\newcommand{\Tm}{{T_-}}
\newcommand{\xp}{{x_+}}
\newcommand{\yp}{{y_+}}
\newcommand{\zp}{{z_+}}
\newcommand{\PP}{{\mathrm{P}_{\! +}}}
\newcommand{\PPt}{{\mathrm{P}_{\! -}}}
\newcommand{\PPB}{\mathrm{P}^{B}_{\! +}}
\newcommand{\PPtB}{\mathrm{P}^{B}_{\! -}}
\newcommand{\PPBpm}{{\mathrm{P}^{B}_{\! \pm}}}
\newcommand{\PPpm}{{\mathrm{P}_{\! \pm}}}
\newcommand{\id}{{\mathbbm{1}}}

\newcommand{\lc}{\mathring{\n}}
\newcommand{\im}{\mathrm{Im}\,}

\newcommand{\Lie}{\mathcal{L}}
\newcommand{\PS}{\mathcal{P}}
\newcommand{\ap}{\alpha}
\newcommand{\bt}{\beta}
\def\w{\wedge}
\newcommand{\p}{\partial}
\newcommand{\pt}{\tilde{\partial}}
\newcommand{\xt}{{x_-}}
\newcommand{\yt}{{y_-}}
\newcommand{\zt}{{z_-}}
\newcommand{\n}{\nabla}
\newcommand{\rd}{\mathrm{d}}

\newcommand{\se}{\Gamma}


\def\tl{\tilde}

\newtheorem{theorem}{Theorem}[section]
\newtheorem{proposition}[theorem]{Proposition}
\newtheorem{lemma}[theorem]{Lemma}
\newtheorem{corollary}[theorem]{Corollary}

\newtheorem*{theorem*}{Theorem}
\newtheorem*{lemma*}{Lemma}
\newtheorem*{proposition*}{Proposition}

\theoremstyle{definition}
\newtheorem{Def}[theorem]{Definition}

\theoremstyle{definition}
\newtheorem*{notation*}{Notation}
\newtheorem*{definition*}{Definition}

\theoremstyle{remark}

\newtheorem{Ex}[theorem]{Example}

\newenvironment{Rem}[1]{\par\noindent\textbf{{Remark:}}\space#1}{}

\newtheoremstyle{ref}{}{}{\itshape}{}{\bfseries}{.}{.5em}{#1 \thmnote{#3}}
\theoremstyle{ref}
\newtheorem*{reftheorem}{Theorem}
\newtheorem*{refproposition}{Proposition}
\newtheorem*{refdefinition}{Definition}

\input xy

\xyoption{all}

\DeclareMathOperator{\End}{End}

\title{Algebroid Structures on Para-Hermitian Manifolds}
\author{David Svoboda\\ \\
\it{Perimeter Institute For Theoretical Physics}\\
\it{31 Caroline St. N., N2L2Y5, Waterloo ON, Canada}\thanks{e-mail:dsvoboda@perimeterinstitute.ca}\ \footnote{\it This article may be downloaded for personal use only. Any other use requires prior permission of the author and AIP Publishing. This article appeared in \cite{Svoboda:2018rci} and may be found at  https://aip.scitation.org/doi/10.1063/1.5040263}}
\date{}

\begin{document}
\maketitle
\begin{abstract}
We present a global construction of a so-called D-bracket appearing in the physics literature of Double Field Theory (DFT) and show that if certain integrability criteria are satisfied, it can be seen as a sum of two Courant algebroid brackets. In particular, we show that the local picture of the extended space-time used in DFT fits naturally in the geometrical framework of para-Hermitian manifolds and that the data of an (almost) para-Hermitian manifold is sufficient to construct the D-bracket. Moreover, the twists of the bracket appearing in DFT can be interpreted in this framework geometrically as a consequence of certain deformations of the underlying para-Hermitian structure.
\end{abstract}
\newpage
\section{Introduction}
This paper is intended as a more in-depth analysis and an extension of the mathematical framework presented in the paper \cite{freidel2017generalised}, where an interested reader can find more details about the relevant physical background and motivation. We present a construction of a bracket operation called the D-bracket and argue that this bracket is a natural operation on the tangent bundle of an (almost) para-Hermitian manifold, outlining its various potential applications to mathematical problems, especially in deformation theory.

\subsection*{Motivation}
The motivation for the presented construction is largely coming from physics, namely Double Field Theory (DFT){\cite{Tseytlin:1990nb, Tseytlin:1990va, Siegel:1993xq,Siegel:1993th,Hull:2009mi}} (for an overview and more references see \cite{Aldazabal:2013sca,geissbuhler2013exploring}), which considers the following local picture of a so-called \textit{extended space-time}. Similiar constructions of doubled geometry also appear in \textit{Metastring Theory} and \textit{Born Geometry} \cite{freidel2014born,freidel2015metastring}.

Let $\{x^i,\tl{x}_j\}^{i=1\cdots n}_{j=n\cdots 2n}$ be a set of $2n$ local coordinates such that the tangent bundle frame $\{\p_i=\frac{\p}{\p x^i},\pt^j=\frac{\p}{\p \tl{x}_j}\}$ satisfies
\begin{align*}
\eta(\p_i,\pt^j)=\eta(\pt^j,\p_i)=\eta_i^j=\delta_i^j,\quad \eta(\p_i,\p_j)=\eta_{ij}=\eta(\pt^i,\pt^j)=\eta^{ij}=0,
\end{align*}
with respect to a constant metric $\eta$, expressing a certain duality between the coordinates $x^i$ and $\tl{x}_j$. Physical considerations then dictate \cite{Siegel:1993th,Siegel:1993xq,Hull:2009zb} that the tangent bundle of this extended space-time is equipped with a so-called \textbf{D-bracket} instead of the usual Lie bracket of vector fields. The D-bracket is defined with respect to the coordinate frame $\{\p_i,\pt^j\}$ as
\begin{align}\label{eq:D-bracket}
\bl X,Y \br^D=\left(X^I\p_IY^J-Y^I\p_IX^J+\eta_{IL}\eta^{KJ}Y^I\p_KX^L\right)\p_J,
\end{align}
where the capital indices run through $1,\cdots,2n$ so that a $2n$-component vector field
\begin{align}\label{intro-vectorfield}
X=
\begin{pmatrix}
X^i\\
\tl{X}_j
\end{pmatrix},
\end{align}
can be written as $X^I\p_I=X^i\p_i+\tl{X}_j\pt^j$ and $\eta_{IJ}$ is the inverse matrix of $\eta^{IJ}$.

The D-bracket in general does not satisfy the Jacobi identity,
\begin{align*}
\bl X,\bl Y,Z\br^D\br^D-\bl Y,\bl X,Z\br^D\br^D\neq \bl\bl X,Y\br^D,Z\br^D, 
\end{align*}
but it has been observed in \cite{Siegel:1993th,Siegel:1993xq} that the Jacobi identity is satisfied when the bracket is restricted to vector fields obeying a so-called \textbf{section condition}. This means that the coefficient functions $X^i(x,\tl{x})$ and $\tl{X}_j(x,\tl{x})$ of a generic vector field \eqref{intro-vectorfield} must only depend on one coordinate out of each pair $(x^k,\tl{x}_k)$ for every $k=1,\cdots,n$. In other words, the coordinate dependence of the vector field $X$ is restricted to a half-dimensional subspace. Different choices of such subspaces are called {\it polarizations} and should be thought of as restrictions of the extended space-time to an $n$-dimensional, {\it physical} space-time. The different polarizations are then related by \textit{T-duality transformations}, which manifest themselves as an exchange of the individual $x^k\leftrightarrow \tl{x}_k$ coordinates.

One particular polarization is given by setting
\begin{align}\label{eq:polarization}
\pt^k=0, \forall k=1,\cdots,n.
\end{align}
In this polarization, the following property of the D-bracket is observed  -- upon performing a formal exchange
\begin{align}\label{eq:exchange_partial_dx}
\pt^i\rightarrow dx^i,
\end{align}
one recovers a local coordinate expression for the Dorfman bracket:
\begin{align*}
\bl X+\ap,Y+\bt\br=[X,Y]+\Lie_X\bt-\imath_Y\rd\ap,
\end{align*}
where $\ap$ and $\bt$ are one-forms obtained by mapping the last $n$ components of the $2n$-component vector fields $X,Y$ as in \eqref{intro-vectorfield} via the map \eqref{eq:exchange_partial_dx}, i.e. in components we get
\begin{align*}
\ap=\tl{X}_i dx^i,\quad \bt=\tl{Y}_idx^i.
\end{align*}

In various physics applications, the D-bracket is modified by tensorial quantities called \textbf{fluxes} to obtain a so-called  {\it twisted} D-bracket:
\begin{align*}
\bl X,Y\br^{D,\mathcal{F}}=\bl X,Y\br^D+\mathcal{F}(X,Y).
\end{align*}
Certain twists of the D-bracket yield upon imposing the section condition well-known twisted brackets, for example in the polarization \eqref{eq:polarization} the twisted D-bracket reduces to the well-known $H$-twisted Dorfman bracket:
\begin{align}\label{eq:H-dorfman}
\bl X+\ap,Y+\bt\br_H=[X,Y]+\Lie_X\bt-\imath_Y\rd\ap+H(X,Y).
\end{align}
In general, however, the fluxes are given by very complicated expressions (see for example \cite[Eq. 5.16]{geissbuhler2013exploring}) and their unified geometrical interpretation is unclear.

Even though the above local coordinate description is typically sufficient in the physics applications, the question of how the local patches fit in a global geometrical picture is of interest to physicists and has been tackled in both the physics \cite{hohm2013towards,berman2014global,freidel2017generalised} and mathematics \cite{vaisman2012geometry,vaisman2013towards} literature. In this work we employ the approach of \cite{vaisman2012geometry,vaisman2013towards} and \cite{freidel2017generalised}, where the proposed global geometry is that of an (almost) para-Hermitian manifold.

\subsection*{Results}
We show that the above picture can be generalized to the case of a not necessarily flat (almost) para-Hermitian manifold, while the discussion above corresponds to the case when the manifold is flat. The flat limit will still be repeatedly referenced in relation to the usual results known in physics literature.

Let $(\PS,K,\eta)$ be an almost para-Hermitian manifold. In order to define the D-bracket, we introduce a notion of \textbf{adapted connections} (see Definition \ref{def:adapted_conn}), which is a class of connections satisfying certain compatibility conditions with the para-Hermitian data. Given such connection $\n$, the D-bracket is defined in the following way:
\begin{refdefinition}[\ref{def:D-bracket}]
Let $(\PS,\eta,K)$ be an almost para-Hermitian manifold and $\n$ an adapted connection. We define the \textbf{D-bracket} by
\begin{align}
\eta(\bl X,Y\br^D,Z)\coloneqq \eta(\n_{X}Y-\n_{Y}X,Z)+\eta(\n_{Z}X,Y).
\end{align}
\end{refdefinition}
Similar formula was used to define a bracket operation by Vaisman \cite{vaisman2012geometry} in the context of so called metric algebroids. Denote now the eigenbundles of the almost para-Hermitian structure $K$ by $T_\pm$. There exist natural vector bundle isomorphisms (see Definition \ref{eta_iso}):
\begin{align*}
\begin{aligned}
\rho_\pm:\ T\PS=&\Tp\oplus \Tm\rightarrow T_\pm\oplus T_\pm^*\\
X=&x_++\xt \mapsto x_\pm+\eta(x_\mp).
\end{aligned}
\end{align*}
When either of the distributions $T_\pm$ is integrable, we denote the corresponding integral foliations $\FF_\pm$. The isomorphisms then induce Courant algebroid brackets $\bracd_\pm$ on the tangent bundle $T\PS$:
\begin{align*}
\bbl X,Y\bbr_\pm\coloneqq\rho_\pm^{-1}\bbl \rho_\pm X,\rho_\pm Y\bbr_{\FF_\pm},
\end{align*}
where $\bracd_{\FF_\pm}$ denotes the standard Dorfman brackets on $(\TT)\FF_\pm$.
To relate the D-bracket defined in this way to the Dorfman bracket, we define the \textbf{$\PPpm$-projected brackets} associated to an arbitrary connection by
\begin{align*}
\eta(\bl X,Y\br^\n_\pm,Z)\coloneqq\eta(\n_{\PPpm X}Y-\n_{\PPpm Y}X,Z)+\eta(\n_{\PPpm Z}X,Y),
\end{align*}
where $\PPpm$ are projections onto $T_\pm$. These projected brackets then coincide with the Courant algebroid brackets $\bracd_{\FF_\pm}$ iff the connection $\n$ is appropriately adapted to the foliated manifold (see Definitions \ref{def:pnhermitian} and \ref{def:adapted_conn} for the notions of a p(n)-para-Hermitian manifold and p(n)-adapted connections):
\begin{reftheorem}[\ref{prop:metric-courant-TP}]
Let $(\PS,\eta,K)$ be a p(n)-para-Hermitian manifold and $\n$ a connection on $T\PS$. The $\PPpm$-projected bracket associated to $\n$ is equal to the Courant algebroid bracket $\bracd_+$,
\begin{align*}
\bracd_+^\n=\bracd_+,
\end{align*}
if and only if $\n$ is a p(n)-adapted connection.
\end{reftheorem}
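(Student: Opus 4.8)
The plan is to prove both implications by unwinding the two brackets in a local frame adapted to the para-Hermitian structure and comparing coefficient functions. First I would fix a local frame $\{e_a\} = \{e_i, \tilde{e}^j\}$ in which $e_i$ span $T_+$ and $\tilde{e}^j$ span $T_-$, normalized so that $\eta(e_i,\tilde{e}^j) = \delta_i^j$ and $\eta(e_i,e_j) = \eta(\tilde{e}^i,\tilde{e}^j) = 0$; the p(n)-para-Hermitian condition should, by Definition \ref{def:pnhermitian}, guarantee the existence of such a frame with the leaves of $\FF_+$ as coordinate submanifolds, so that $e_i = \partial_i$ and the bracket $\bbl\cdot,\cdot\bbr_{\FF_+}$ on $(\TT)\FF_+$ has its standard coordinate form. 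Under $\rho_+$, a vector field $X = x_+ + \xt$ goes to $x_+ + \eta(\xt)$, so $\bbl X,Y\bbr_+$ is computed by transporting the Dorfman bracket of $\rho_+X,\rho_+Y$ back through $\rho_+^{-1}$; I would write this out explicitly, getting the $T_+$-part as the leafwise Lie bracket of $x_+,y_+$ and the $T_-$-part (after applying $\eta^{-1}$) as $\Lie_{x_+}\eta(\yt) - \imath_{y_+}\rd\,\eta(\xt)$ restricted leafwise.

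Next I would expand $\eta(\bracd_+^\n X,Y,Z)$ using the defining formula
\[
\eta(\bl X,Y\br^\n_+,Z) = \eta(\n_{\PP X}Y - \n_{\PP Y}X,Z) + \eta(\n_{\PP Z}X,Y),
\]
and pair it against the three types of basis elements for $Z$ (namely $Z \in T_+$ versus $Z \in T_-$), which isolates the $T_-$-component and the $T_+$-component of $\bracd_+^\n X$ respectively, since $\eta$ pairs $T_+$ with $T_-$. This turns the tensorial identity $\bracd_+^\n = \bracd_+$ into a finite list of scalar equations involving the connection coefficients $\Gamma_{ab}^c$ of $\n$ in the adapted frame. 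The content of "$\n$ is p(n)-adapted" (Definition \ref{def:adapted_conn}) should be precisely the statement that certain blocks of these coefficients vanish or are symmetric — e.g. compatibility with $\eta$, preservation of the splitting $T_+ \oplus T_-$ in the relevant directions, and the leafwise torsion-freeness needed so that $\n_{x_+}y_+ - \n_{y_+}x_+$ reduces to the leafwise Lie bracket. I would verify that each scalar equation produced by $\bracd_+^\n = \bracd_+$ matches exactly one of the defining conditions of a p(n)-adapted connection, and conversely.

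For the forward direction (equality of brackets $\Rightarrow$ adaptedness), I would read the argument in reverse: assuming $\bracd_+^\n = \bracd_+$, specialize $X,Y,Z$ to frame vectors in the various $T_\pm$ combinations and solve for the connection coefficients, concluding that they satisfy the adaptedness constraints; the freedom to choose $Z$ in $T_+$ or $T_-$ independently is what makes the system determined enough to pin the coefficients down. For the converse (adaptedness $\Rightarrow$ equality), I would substitute the constraints back in and check the two brackets agree block by block, using that both sides are genuine tensors (bilinear over functions in the appropriate slots, which for the Dorfman/Courant bracket holds after the $\eta$-pairing and for $\bracd_+^\n$ is immediate from the connection being $C^\infty$-linear in the lower slot).

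The main obstacle I anticipate is bookkeeping rather than conceptual: one must be careful that the "$\PP$-projected" bracket is only $C^\infty$-linear in $Z$ and in a twisted sense in $X$, so the identification of coefficients must be done after contracting with $\eta$ and only against frame vectors, and one must track how $\eta(\xt)$ versus $\xt$ enter (the isomorphism $\rho_+$ mixes a vector-field slot with a one-form slot, and $\rd$ and $\Lie$ on the $\FF_+$ side must be matched with $\n$-expressions on the $T\PS$ side). A secondary subtlety is that the Courant bracket $\bracd_{\FF_+}$ is a leafwise object: to compare it with $\bracd_+^\n$, which is defined on all of $T\PS$, one needs the p(n)-condition to ensure the $T_+$-leaves are honest submanifolds and that the relevant derivatives are leafwise — this is exactly where the hypothesis "p(n)-para-Hermitian" (as opposed to merely almost para-Hermitian) is used, and I would make sure the proof invokes it at that point and nowhere spuriously.
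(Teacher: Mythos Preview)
Your proposal is correct and follows essentially the same strategy as the paper: decompose $X,Y,Z$ according to the splitting $T_+\oplus T_-$, compare $\eta(\bl X,Y\br_+^\n,Z)$ against the explicit expression \eqref{Dorf_LL2} for $\eta(\bl X,Y\br_+,Z)$ type by type, and observe that the resulting scalar identities are exactly the four conditions in Definition~\ref{def:adapted_conn}. The only difference is packaging: the paper works invariantly with arbitrary sections $x_\pm,y_\pm,z_\pm$ and never introduces a frame or connection coefficients $\Gamma^c_{ab}$, which sidesteps the tensoriality bookkeeping you flag as your main obstacle (since both sides are evaluated on general sections, no reduction-to-frame argument is needed). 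Your frame-based version would work but is slightly heavier; note in particular that condition~4), $\eta(T^\n(x_+,y_+),z_+)+\eta(\n_{z_+}x_+,y_+)=0$, is not simply ``leafwise torsion-freeness'' and comes from the component $\eta(\bl x_+,y_+\br_+^\n,z_+)=0$, where integrability of $T_+$ is used to kill the $N_+$ term --- make sure your enumeration of the scalar equations picks this one up.
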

The relationship between the Dorfman bracket and the ``doubled" D-bracket is therefore within our framework realized by projecting the vector fields in the arguments of the connections defining the brackets as opposed to restricting the coordinate dependence of the vector fields. It will become clear that this way of constructing the D-bracket allows for relaxation of the section condition approach taken in physics.

We now show how to recover usual results in special cases when $K$ is fully integrable and when the underlying pseudo-Riemannian manifold is flat (i.e. the Levi-Civita connection of $\eta$ has vanishing curvature tensor).
When $K$ is integrable, the D-bracket can be written as a sum of the brackets $\bracd_\pm$,
\begin{align*}
\bbl X,Y\bbr^D=\bbl X,Y\bbr_++\bbl X,Y\bbr_-,
\end{align*}
which recovers the definition used in \cite{vaisman2013towards}. Moreover, the integrability of $K$ implies that every local patch $U\subset \PS$ can be equipped with a set of local coordinates $\{x^i,\tl{x}_j\}^{i=1\cdots n}_{j=n+1\cdots 2n}$ called \textbf{adapted coordinates}, such that the the eigenbundles $T_+$ and $T_-$ are spanned by $\{\p_i=\frac{\p}{\p x^i}\}$ and $\{\pt^i=\frac{\p}{\p \tl{x}_i}\}$, respectively. If the manifold is flat, the adapted coordinates can be chosen such that $\eta(\p_i,\pt^j)=\delta_i^j$, $\eta(\p_i,\p_j)=\eta(\pt^i,\pt^j)=0$ and the maps $\rho_\pm$ realize the mapping \eqref{eq:exchange_partial_dx} used in physics:
\begin{align*}
\pt^i \overset{\rho_+}{\longrightarrow} dx^i, \quad \p_i\overset{\rho_-}{\longrightarrow} d\tl{x}_i.
\end{align*}
In such flat case, we also recover the usual local expression \eqref{eq:D-bracket} for the D-bracket.

We continue the discussion by introducing a natural endomorphism on the tangent bundle $T\PS$, which preserves the eigenbundle $T_-$ but changes $T_+$; this can be geometrically viewed as a deformation of the splitting of $T\PS$, which \textit{shears} $T_+$ in the direction of $T_-$. It turns out that such maps provide the right framework for understanding the fluxes and twisted brackets known in physics literature, in our setting of para-Hermitian manifolds. These maps are also closely related to shearing transformations of generalized geometry, as we note in Section \ref{sec:GG-relationship}.

We start by considering a skew map
\begin{align*}
B:T_+\rightarrow T_-,\quad \eta(BX,Y)=-\eta(X,BY).
\end{align*}
This is equivalent to a choice of a two-form $b_+\coloneqq\eta B\in \se(\Lambda^2 T_+^*)$ or a bi-vector $b_-\coloneqq B\eta^{-1}\in\se(\Lambda^2 T_-)$. $B$ then defines an endomorphism of the tangent bundle, denoted by $e^B$ and called the \textbf{$B$-transformation},  given in the adapted frame of $T\PS$ by
\begin{align*}
e^B=
\begin{pmatrix}
\id & 0 \\
B & \id
\end{pmatrix}.
\end{align*}
Such an endomorphism on the tangent bundle of a para-K\"ahler manifold induces a new almost para-Hermitian structure $K_B$:
\begin{align*}
K_B\coloneqq e^{-B}K e^B=
\begin{pmatrix}
\id & 0 \\
2B & -\id
\end{pmatrix},
\end{align*}
called the \textbf{B-transformation} of $K$. We show that the eigenbundles of any almost para-Hermitian structure are involutive with respect to its corresponding D-bracket; it is therefore natural to ask under which conditions the eigenbundles of $K_B$ are involutive under the D-bracket of $K$:
\begin{refproposition}[\ref{prop:weak-int-maurer-cartan}]
Let $(K_B,\eta)$ be a B-transformation of a para-Hermitian structure $(K,\eta)$ and let $\bracd^D$ be the D-bracket associated to $(K,\eta)$. The eigenbundles of $K_B$ are involutive under the D-bracket if and only if
\begin{align}\label{eq:intro-maurer-cartan}
\rd_+b_++(\Lambda^3\eta)[b_-,b_-]_-=0,
\end{align}
where $\rd_+$ is the Lie algebroid differential of $T_+$ and $\brac_-$ is the Schouten bracket of $T_-$.
\end{refproposition}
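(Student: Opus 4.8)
The plan is to identify the eigenbundles of $K_B$, dispose of one of them at once, and handle the other by pushing the computation through the isomorphisms $\rho_\pm$ into the two underlying Dorfman brackets. First, since $e^B$ fixes $\Tm$ pointwise and a vector $v=v_++v_-$ satisfies $K_Bv=v$ exactly when $e^Bv\in\Tp$, the $(+1)$- and $(-1)$-eigenbundles of $K_B=e^{-B}Ke^B$ are $e^{-B}\Tp=\{x_+-Bx_+:x_+\in\se(\Tp)\}$ and $\Tm$. The bundle $\Tm$ is an eigenbundle of the para-Hermitian structure $K$, so by the general fact that the eigenbundles of an almost para-Hermitian structure are involutive under its own D-bracket, $\Tm$ is automatically involutive under $\bl\ ,\ \br^D$. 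Hence the eigenbundles of $K_B$ are both involutive if and only if $e^{-B}\Tp$ is; and since $W=w_++w_-$ lies in $e^{-B}\Tp$ iff $w_-=-Bw_+$, this is the requirement that $(\bl X,Y\br^D)_-=-B(\bl X,Y\br^D)_+$ for all sections $X=x_+-Bx_+$, $Y=y_+-By_+$ with $x_+,y_+\in\se(\Tp)$.

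Because $(K,\eta)$ is para-Hermitian, $K$ is integrable and $\bl X,Y\br^D=\bbl X,Y\bbr_++\bbl X,Y\bbr_-$ with $\bbl X,Y\bbr_\pm=\rho_\pm^{\inv}\bbl\rho_\pm X,\rho_\pm Y\bbr_{\FF_\pm}$. The key point is that $\rho_\pm$ intertwine $e^B$ with the standard transformations of the two Courant algebroids: one computes $\rho_+(x_+-Bx_+)=x_+-\imath_{x_+}b_+$ with $b_+=\eta B\in\se(\Lambda^2T_+^*)$, and $\rho_-(x_+-Bx_+)=\eta(x_+)-\imath_{\eta(x_+)}b_-$ with $b_-=B\eta^{\inv}\in\se(\Lambda^2T_-)$ (so $b_-^{\sharp}=B\eta^{\inv}$). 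Thus $\rho_+$ sends sections of $e^{-B}\Tp$ to sections of the graph of the two-form $-b_+$ in $(\TT)\FF_+$, while $\rho_-$ sends them to sections of the graph of the bivector $-b_-$ in $(\TT)\FF_-$; accordingly $\bbl X,Y\bbr_+$ is controlled by the Dorfman bracket of $(-b_+)$-graph sections on $\FF_+$ and $\bbl X,Y\bbr_-$ by that of $(-b_-)$-graph sections on $\FF_-$.

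Expanding the two Dorfman brackets with Cartan calculus is routine. The first gives $\bbl x_+-\imath_{x_+}b_+,\,y_+-\imath_{y_+}b_+\bbr_{\FF_+}=[x_+,y_+]_{\FF_+}-\imath_{[x_+,y_+]}b_+-\imath_{y_+}\imath_{x_+}\rd_+b_+$, and applying $\rho_+^{\inv}$ (which carries a $T_+^*$-covector to the corresponding $\Tm$-vector via $\eta^{\inv}$) yields $\bbl X,Y\bbr_+=e^{-B}[x_+,y_+]_{\FF_+}-\eta^{\inv}(\imath_{y_+}\imath_{x_+}\rd_+b_+)$, i.e.\ with $\Tp$-part $[x_+,y_+]_{\FF_+}$ and $\Tm$-part $-B[x_+,y_+]_{\FF_+}-\eta^{\inv}(\imath_{y_+}\imath_{x_+}\rd_+b_+)$. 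Writing $\xi\coloneqq\eta(x_+)$, $\mu\coloneqq\eta(y_+)$, so that $b_-^\sharp\xi=Bx_+$ and $b_-^\sharp\mu=By_+$, the second Dorfman bracket equals $[b_-^\sharp\xi,b_-^\sharp\mu]_{\FF_-}-[\xi,\mu]_{b_-}$ for the Koszul bracket $[\ ,\ ]_{b_-}$, so $\bbl X,Y\bbr_-$ has $\Tm$-part $[Bx_+,By_+]_{\FF_-}$ and $\Tp$-part $w_+\coloneqq-\eta^{\inv}([\xi,\mu]_{b_-})$. Substituting into $(\bl X,Y\br^D)_-=-B(\bl X,Y\br^D)_+$, the $-B[x_+,y_+]_{\FF_+}$ terms cancel and, using $-Bw_+=b_-^\sharp([\xi,\mu]_{b_-})$, the condition reduces to
\[
[b_-^\sharp\xi,b_-^\sharp\mu]_{\FF_-}-b_-^\sharp([\xi,\mu]_{b_-})=\eta^{\inv}\!\big(\imath_{y_+}\imath_{x_+}\rd_+b_+\big)\qquad\text{for all }x_+,y_+\in\se(\Tp).
\]
The left-hand side is precisely the obstruction to $b_-^\sharp$ being a Lie algebroid morphism, which a standard identity expresses through the Schouten square $[b_-,b_-]_-$; applying $\eta$ to both sides and using the isomorphism $\Lambda^3\eta$ identifying $\Lambda^3T_-$ with $\Lambda^3T_+^*$ rewrites this equation as $\rd_+b_++(\Lambda^3\eta)[b_-,b_-]_-=0$, giving both implications.

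The step I expect to be most delicate is purely one of conventions and normalizations: verifying that $\rho_\pm$ genuinely intertwine $e^B$ with the standard $b_+$-field and $b_-$-bivector transformations (rather than those of $\pm b_\pm$, which would change a sign in \eqref{eq:intro-maurer-cartan}), keeping interior-product and $\eta$-raising conventions consistent so that the coefficient of $[b_-,b_-]_-$ comes out to be exactly $1$ for the normalization of the Schouten bracket used here, and invoking the precise Cartan identity relating $[b_-^\sharp\xi,b_-^\sharp\mu]-b_-^\sharp[\xi,\mu]_{b_-}$ to a contraction of $[b_-,b_-]_-$. One also has to justify the decomposition $\bl X,Y\br^D=\bbl X,Y\bbr_++\bbl X,Y\bbr_-$ in the present (not necessarily para-K\"ahler) setting, which follows from integrability of $K$; alternatively, one may avoid the Courant-algebroid language altogether and expand $\eta(\bl X,Y\br^D,Z)=\eta(\n_XY-\n_YX,Z)+\eta(\n_ZX,Y)$ directly in an adapted frame for a convenient adapted connection, at the cost of longer formulas.
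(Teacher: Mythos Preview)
Your argument is correct but follows a genuinely different route from the paper. The paper works directly with the canonical connection $\n^c=\PP\lc\PP+\PPt\lc\PPt$ in the defining formula $\eta(\bl X,Y\br^D,Z)=\eta(\n^c_XY-\n^c_YX,Z)+\eta(\n^c_ZX,Y)$: plugging in $\PPB X=x_++B(x_+)$ and using isotropy one finds $\eta(\n^c_{\PPB X}\PPB Y,\PPB Z)=\lc_{x_++B(x_+)}b(y_+,z_+)$, and the cyclic sum splits immediately into $\sum\lc_{x_+}b(y_+,z_+)=\rd_+b$ and $\sum\lc_{B(x_+)}b(y_+,z_+)$, the latter being identified with the Schouten square via the elementary Lemma~\ref{lem:schouten}. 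No Courant-algebroid machinery, Koszul brackets, or graph-section identities are invoked.

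Your approach instead exploits the decomposition $\bl\ ,\ \br^D=\bbl\ ,\ \bbr_++\bbl\ ,\ \bbr_-$ and pushes each piece through $\rho_\pm$ to the standard Courant algebroids over $\FF_\pm$, where the $B$-transformed eigenbundle becomes the graph of a two-form on one side and of a bivector on the other; the $\rd_+b_+$ term then falls out of the $b$-field-twisted Dorfman bracket, and the Schouten term from the Poisson-geometry identity $[b_-^\sharp\xi,b_-^\sharp\mu]-b_-^\sharp[\xi,\mu]_{b_-}=\tfrac{1}{2}[b_-,b_-]_-(\xi,\mu,\cdot)$. This is more structural and makes transparent \emph{why} both a differential-form and a Schouten contribution appear (they are the separate obstructions on the two Courant algebroids), and it connects directly to Dirac-structure deformation theory as the paper remarks afterward. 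The price is exactly what you flag: more bookkeeping with signs and normalizations, and reliance on identities that the paper avoids by computing with $\n^c$ in three lines. Note also that the paper's Section~5 convention is $K_B=e^BKe^{-B}$, so the $+1$-eigenbundle is $\{x_++Bx_+\}$ rather than $\{x_+-Bx_+\}$; this only flips intermediate signs and does not affect the final equation.
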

The equation \eqref{eq:intro-maurer-cartan} can therefore be seen as a natural condition for \textbf{compatibility} of $K_B$ with $K$ (see Definition \ref{def:compatible}).

Finally, to interpret the fluxes appearing in the physics literature, we restrict our attention to para-K\"ahler manifolds, which is a special class of para-Hermitian manifolds for which the fundamental form $\omega=\eta K$ is closed. We find that the D-bracket associated to $K_B$ is a twisted D-bracket:
\begin{refproposition}[\ref{prop:twistedDbrac}]
Let $K_B$ be a B-transformation of a para-K\"ahler structure $(\PS,\eta,K)$. The D-bracket associated to $K_B$ is given by
\begin{align*}
\eta(\bl X,Y\br^{D,B},Z)=\eta(\bl X,Y\br^D ,Z)-(\rd b_+)(X,Y,Z).
\end{align*}
where $\bl\ ,\ \br^D$ denotes the D-bracket of $K$.
\end{refproposition}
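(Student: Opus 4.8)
The plan is to evaluate the D-bracket of $K_B$ through a judiciously chosen adapted connection and then to isolate the $b_+$-dependence by a term-by-term comparison with the D-bracket of $K$. Because $\PS$ is para-K\"ahler, the Levi-Civita connection $\lc$ of $\eta$ satisfies $\lc K=0$, so $\lc$ is an adapted connection for $K$, it is torsion-free, and it preserves the eigenbundles $T_\pm$; I will use it to compute $\bl\ ,\ \br^D$. For $K_B=e^{-B}Ke^B$ the natural adapted connection is the conjugate
\[
\n^B_XY\coloneqq e^{-B}\bigl(\lc_X(e^BY)\bigr).
\]
The first task is to confirm that $\n^B$ is genuinely adapted for $K_B$: the identity $\n^BK_B=0$ is immediate from $\lc K=0$, and $\n^B\eta=0$ follows from $\lc\eta=0$ once one knows that $e^{\pm B}$ are $\eta$-isometries --- which in turn uses precisely that $B$ is $\eta$-skew and that $T_\pm$ are $\eta$-isotropic, since then $\eta(e^BX,e^BY)=\eta(X,Y)+\eta(BX,Y)+\eta(X,BY)+\eta(BX,BY)=\eta(X,Y)$, the last two cross terms cancelling resp.\ vanishing. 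The remaining clauses of Definition \ref{def:adapted_conn} are checked directly; if (as should be established earlier) the D-bracket does not depend on the choice of adapted connection, it is then enough that $\n^B$ be one such, and it legitimately computes $\bl\ ,\ \br^{D,B}$.

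With this choice, unwinding Definition \ref{def:D-bracket} gives
\[
\eta(\bl X,Y\br^{D,B},Z)=\eta(\n^B_XY-\n^B_YX,Z)+\eta(\n^B_ZX,Y),
\]
and, using that $e^{-B}$ is the $\eta$-adjoint of $e^B$, each term expands as $\eta(\n^B_UV,W)=\eta\bigl(\lc_U(e^BV),e^BW\bigr)=\eta(\lc_UV,W)+\eta\bigl((\lc_UB)V,e^BW\bigr)$. Here $(\lc_UB)V\in T_-$ (since $\lc$ preserves $T_\pm$ and $B$ maps into $T_-$), so its pairing against $e^BW-W\in T_-$ vanishes by isotropy of $T_-$, and $\lc\eta=0$ identifies $\eta((\lc_UB)V,W)$ with $(\lc_Ub_+)(V,W)$ for $b_+=\eta B$. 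Thus $\eta(\n^B_UV,W)=\eta(\lc_UV,W)\pm(\lc_Ub_+)(V,W)$, the sign being the one dictated by the convention attaching $\pm B$ to $e^{\pm B}$ in the matrix of $K_B$. Summing the three terms, the $\eta(\lc\,\cdot\,,\cdot)$ pieces reassemble into $\eta(\bl X,Y\br^D,Z)$ and the $b_+$-pieces into $\pm\bigl[(\lc_Xb_+)(Y,Z)+(\lc_Yb_+)(Z,X)+(\lc_Zb_+)(X,Y)\bigr]$, which by the standard formula expressing $\rd$ of a $2$-form through a torsion-free connection equals $\pm(\rd b_+)(X,Y,Z)$, with $b_+$ regarded as a $2$-form on $\PS$ via $\Lambda^2T_+^*\hookrightarrow\Lambda^2T^*\PS$. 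Fixing the sign yields $\eta(\bl X,Y\br^{D,B},Z)=\eta(\bl X,Y\br^D,Z)-(\rd b_+)(X,Y,Z)$, as claimed.

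Most of this is mechanical; the points needing care are (i) checking that $\n^B$ meets all the requirements of Definition \ref{def:adapted_conn} for $K_B$ --- or, equivalently, invoking the connection-independence of the D-bracket --- so that $\n^B$ may legitimately be used to compute $\bl\ ,\ \br^{D,B}$; (ii) the two structural facts that make the extra terms collapse, namely that $e^{\pm B}$ are $\eta$-isometries and that $\lc$ preserves $T_\pm$, the latter being exactly where the para-K\"ahler hypothesis $\lc K=0$ (rather than mere para-Hermiticity) is used; and (iii) tracking the sign of the $B$-transformation so the twist appears as $-\rd b_+$ rather than $+\rd b_+$. I expect (i) to be the only genuinely delicate step; the remainder is the torsion-free-connection expression for $\rd$ applied to $b_+=\eta B$.
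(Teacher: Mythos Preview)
Your argument is correct, but it proceeds by a different route than the paper. The paper invokes Proposition \ref{prop:Dbrac-contorsion}, which expresses the D-bracket of any almost para-Hermitian structure as $\bracd^{\lc}$ corrected by the $(+3,-0)$, $(+2,-1)$, $(+1,-2)$, $(+0,-3)$ components of $\rd\omega$; applied to $K_B$ one has $\rd\omega_B=2\rd b_+$ (since $\omega$ is closed), and Lemma \ref{lem:lc_b} is used to show that only the $(+3,-0)_B$ and $(+2,-1)_B$ components of $\rd b_+$ survive, so the correction is exactly $-\rd b_+$. Your approach instead works with the explicit conjugate connection $\n^B$ and identifies the cyclic sum of $\lc b_+$ terms with $\rd b_+$ via the standard torsion-free formula, bypassing the type decomposition entirely. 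Both arguments are really computing the bracket associated to the canonical connection of $K_B$; the paper leans on pre-established machinery, while your version is more self-contained.

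Regarding your concern (i): it is worth noting that your $\n^B$ is in fact \emph{equal} to the canonical connection $\n^{c,B}=\PPB\lc\PPB+\PPtB\lc\PPtB$ of $K_B$. Indeed, writing $Y=y_+^B+y_-^B$ with $y_+^B=e^Bu_+$ for some $u_+\in T_+$ and $y_-^B\in T_-$, one finds $\n^{c,B}_XY=e^B\lc_Xu_++\lc_Xy_-^B=e^B\lc_X(e^{-B}Y)$, using only that $\lc$ preserves $T_\pm$. This identification makes the verification of Definition \ref{def:adapted_conn} unnecessary: the D-bracket is, by the paper's own convention, computed via the canonical connection, and that is precisely what you are using. (If you do attempt to verify condition 4) of Definition \ref{def:adapted_conn} directly, you will find it involves $N_+^B$ and thus fails when $T_+^B$ is not Frobenius integrable; this is a subtlety of the paper's exposition rather than a flaw in your argument.)
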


The different components of $\rd b_+$ then give exactly the $H,Q$ and $R$ fluxes well-known in physics \cite{Shelton:2005cf,andriot2012non,blumenhagen2012bianchi,halmagyi2009non}. Additionally, the sum of the $H$ and $R$ fluxes appears on the left hand side of equation \eqref{eq:intro-maurer-cartan}. We therefore find that the twists of the D-bracket can be interpreted geometrically as a consequence of a deformation of the para-K\"ahler structure $K$ into an almost para-Hermitian structure $K_B$. The corresponding fluxes then give the obstruction to the compatibility of $K_B$ with $K$.

The paper is organised as follows. Section 2 gives an introduction to Lie and Courant algebroids. Section 3 includes a brief introduction to para-Hermitian geometry followed by a definition of a pair of Courant algebroids on the tangent bundle of the para-Hermitian manifold. In section 4 we introduce a formalism which uses adapted connections to define the D-bracket for any almost para-Hermitian manifold and finally in section 5 we discuss a certain deformation problem of para-K\"ahler manifolds, how it relates to the D-bracket and its relationship to fluxes. We conclude the paper in section 6 by outlining possible future research directions and implications to physics.

It has been brought to the author's attention that an e-print of a work on closely related subjects \cite{Chatzistavrakidis:2018ztm} has been posted very recently. In this work, the authors propose a model for the DFT extended spacetime as the total space of a cotangent bundle $T^*M$ of a Riemannian manifold $(M,g)$, which is in fact a canonical example of an (almost) para-Hermitian manifold\footnote{\cite{dructua2013riemannian} presents a construction of the almost para-Hermitian structure on $T^*M$ and discusses conditions under which this structure is integrable.}. The authors further also observe the existence of the pair of the canonical Courant algebroids discussed at the end of section \ref{couralgds}.
\section{Algebroids}
In this section, we recall the definitions of Lie and Courant algebroids. At the end of the section, we provide an important example motivating the discussion carried out in section \ref{sec:adapted}. For more details and examples of Lie and Courant algebroids, consult for example \cite{weinsteingeomodels,Roytenberg} for the former and \cite{Liu:1995lsa,Roytenberg} for the latter.
\subsection{Lie Algebroids}
\begin{Def}
Let $E\rightarrow M$ be a vector bundle. A \textbf{Lie algebroid} is a triple $(E, \brac_E,a)$, where $\brac_E$ is a skew bracket operation on $\se(E)$ and $a$ is a bundle map $E\rightarrow TM$ called the \textbf{anchor}, satisfying
\begin{align}
a([ X,Y]_E)&=[a(X),a(Y)].\label{anchoring}
\end{align}
Furthermore, the bracket is a derivation on $C^\infty(M)$,
\begin{align}
[ X,fY]_E&=a(X)[f]Y+f[ X,Y]_E.\label{liealg_leibniz}
\end{align}
\end{Def}
\begin{Ex}
For any manifold $M$, the \textbf{tangent}, \textbf{canonical} or \textbf{standard Lie algebroid} $(TM,\brac,\id_{TM})$ is given by the Lie bracket and identity anchor.
\end{Ex}
It is sometimes useful to think of Lie algebroids as generalisations of the tangent Lie algebroid. In particular, Lie algebroids provide a way of thinking of $\se(E)$ as derivations on $M$. We get another example of a Lie algebroid arising from considering an integrable distribution $\mathcal{D}\subset TM$:

\begin{Ex}\label{ex:alg2}
Let $\mathcal{D}\subset TM$ be an integrable distribution integrating to a foliation $\FF=\sqcup_i \mathcal{F}_i$, where $\FF_i$ are the leafs of $\FF$. Then $\mathcal{D}\rightarrow \FF$ is the tangent Lie algebroid of $\FF$. Considering $\mathcal{D}$ as a subbundle of $TM$, the Lie bracket on $\mathcal{D}=T\FF$ is the restriction of the Lie bracket on $TM$ to $\mathcal{D}$. 
\end{Ex}

\subsubsection{The Gerstenhaber algebra of a Lie algebroid}
Given a Lie algebroid $E$, we can uniquely extend the Lie algebroid bracket to sections of $\Lambda^\bullet(E)$. Such a bracket is called the (generalized) Schouten bracket, satisfies a graded version of Jacobi identity and gives $\Lambda^\bullet(E)$ the structure of a \textbf{Gerstenhaber algebra}. The Schouten bracket satisfies the following:
\begin{lemma}\label{lem:schouten}
Let $\bt$ be a bi-vector on a manifold $M$ and let $\n$ be a torsionless connection. Then the Schouten bracket of $\bt$ with itself is given by\footnote{Depending on conventions of exterior products, sometimes a normalization constant $\frac{1}{2}$ is added in front of $[\bt,\bt]$.}
\begin{align*}
[\bt,\bt](\lambda,\mu,\nu)=\mkern-18mu\sum_{Cycl.\ \lambda,\mu,\nu}\mkern-18mu \n_{\bt(\lambda)}\bt(\mu,\nu),
\end{align*}
where $\lambda,\mu,\nu \in \Omega^1(M)$.
\end{lemma}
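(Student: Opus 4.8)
The plan is to reduce the claimed identity to a computation in a single coordinate chart and to isolate the one place where torsion-freeness of $\n$ is actually needed.

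First I would note that the right-hand side is tensorial: for fixed $\bt$ the assignment $(\lambda,\mu,\nu)\mapsto\sum_{\mathrm{Cycl.}}(\n_{\bt(\lambda)}\bt)(\mu,\nu)$ is $C^\infty(M)$-linear in $\mu$ and $\nu$ because $\n_{\bt(\lambda)}\bt$ is again a bivector, and in $\lambda$ because $\bt(f\lambda)=f\,\bt(\lambda)$ and $\n$ is tensorial in its subscript slot. Hence both sides are $(0,3)$-tensors and it suffices to compare components in coordinates $(x^i)$. Writing $\bt=\tfrac12\bt^{ij}\p_i\w\p_j$, using $\bt(\lambda)=\bt^{li}\lambda_i\p_l$, and expanding $(\n_X\bt)(\mu,\nu)=X^l\big(\p_l\bt^{jk}+\Gamma^j_{lm}\bt^{mk}+\Gamma^k_{lm}\bt^{jm}\big)\mu_j\nu_k$ with $\Gamma$ the Christoffel symbols of $\n$, the right-hand side splits into a ``$\p$-part'' $\sum_{\mathrm{Cycl.}}\bt^{li}\lambda_i\,\p_l\bt^{jk}\,\mu_j\nu_k$ plus a ``$\Gamma$-part'' built from the Christoffel terms.

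Next I would identify the $\p$-part with $[\bt,\bt](\lambda,\mu,\nu)$: this is exactly the standard coordinate presentation of the Schouten--Nijenhuis bracket of a bivector with itself (with the normalization fixed as in the footnote), so nothing is left to prove there. The crux is to show the $\Gamma$-part vanishes. Writing $T_1^{ijk}:=\Gamma^j_{lm}\bt^{li}\bt^{mk}$ and $T_2^{ijk}:=\Gamma^k_{lm}\bt^{li}\bt^{jm}$, one checks $T_2^{ijk}=-T_1^{ikj}$ by relabeling dummy indices and using the antisymmetry of $\bt$ (no torsion needed here); and then — the single point where torsion-freeness is used — $\Gamma^j_{lm}=\Gamma^j_{ml}$ forces $T_1$ to be symmetric under exchange of its first and third indices, so $T_1^{ikj}=T_1^{jki}$. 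Since a cyclic sum is unchanged when the index pattern of the summand is cyclically permuted, $\sum_{\mathrm{Cycl.}}T_2^{ijk}=-\sum_{\mathrm{Cycl.}}T_1^{jki}=-\sum_{\mathrm{Cycl.}}T_1^{ijk}$, so the $\Gamma$-part $\sum_{\mathrm{Cycl.}}(T_1^{ijk}+T_2^{ijk})$ is identically zero, finishing the proof.

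The only step with any content is this last cancellation, and even that is conceptually transparent: the Schouten bracket sees $\bt$ only through its first derivatives, so replacing $\p$ by a torsion-free $\n$ introduces merely algebraic (Christoffel) terms, which are symmetric in two of the three slots being antisymmetrized and hence drop out. Equivalently one may fix $p\in M$, pass to affine normal coordinates for $\n$ at $p$ — these exist precisely because $\n$ is torsionless — observe that there $(\n_X\bt)(\mu,\nu)\big|_p=X^l\p_l\bt^{jk}\mu_j\nu_k\big|_p$, and read the statement off the coordinate formula for $[\bt,\bt]$. I do not anticipate any genuine obstacle beyond bookkeeping the index symmetries and pinning down the normalization convention.
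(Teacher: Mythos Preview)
Your proof is correct and follows essentially the same coordinate approach as the paper: both compute the Schouten bracket in local coordinates as the cyclic sum $\sum_{\mathrm{Cycl.}}\bt^{il}\p_l\bt^{jk}\lambda_i\mu_j\nu_k$ and then argue that the partial derivatives may be replaced by a torsionless covariant derivative. The paper does this last step in one sentence (``which can be rewritten using a torsionless connection into the desired form''), whereas you supply what the paper leaves implicit --- the tensoriality check on the right-hand side and the explicit index-symmetry argument showing the Christoffel contributions cancel under the cyclic sum --- so your write-up is, if anything, more complete.
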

\begin{proof}
Writing $\bt=\bt^{ij}\p_i\w\p_j$, we compute the Schouten bracket:
\begin{align*}
[\bt,\bt]=\left(\bt^{il}\p_l(\bt^{jk})+\bt^{jl}\p_l(\bt^{ki})+\bt^{il}\p_l(\bt^{jk})+\bt^{kl}\p_l(\bt^{ij}))\right)\p_i\w\p_j\w\p_k.
\end{align*}
All the terms can be brought to the same form by cyclic permutation and contracting in the one-forms $\lambda,\mu,\nu$ yields (omitting the normalization factors)
\begin{align*}
[\bt,\bt](\lambda,\mu,\nu)=\bt^{il}\p_l(\bt^{jk})\left(\sum_{Cycl.\ ijk}\mkern-18mu \lambda_i\mu_j\nu_k\right),
\end{align*}
which can be rewritten using a torsionless connection into the desired form.
\end{proof}

The data of the Gerstenhaber algebra on $\Lambda^\bullet(E)$ can be equivalently given by a Lie algebroid \textbf{exterior derivative} $\rd_E$, which is a degree $1$ derivation on $\Lambda^\bullet(E^*)$. It is uniquely determined by its action on $C^\infty(M)$ and $\se(E^*)$:
\begin{align}\label{d_oneform}
\begin{aligned}
(\rd_Ef)(X)&=a(X)[f]\\
(\rd_E\xi)(X,Y)&=a(X)\xi(Y)-a(Y)\xi(X)-\xi([ X,Y]_E),
\end{aligned}
\end{align}
i.e. $\rd_E: C^\infty(M)\rightarrow E^*$ is given by $a^*\rd$, where $\rd$ is the de-Rham differential on $M$.

The Lie algebroid structure $(E, \brac,a)$ also comes with a natural (generalized) Lie derivative on $E^*$ along sections of $E$, given by \begin{align}\label{eq:lieder}
\Lie^E_X\xi\coloneqq \rd_E\imath_X\xi+\imath_X\rd_E\xi,\quad X\in \se(E),\ \xi \in \se(E^*)
\end{align}
where $\imath_X$ is the contraction induced by the duality pairing between $E$ and $E^*$. Lastly, the bundle $E\oplus E^*$ is equipped with a natural inner product $\langle\ ,\ \rangle$, given by
\begin{align}\label{eq:pairing}
\langle X+\ap,Y+\bt\rangle = \imath_X\bt+\imath_Y\ap.
\end{align}

\subsection{Courant Algebroids}
\begin{Def}\label{def:courant}
Let $E\rightarrow M$ be a vector bundle. A \textbf{Courant algebroid} is a quadruple $(E,a,\langle\ , \ \rangle_E,\bbl\ ,\ \bbr)$, where $a$ is bundle map $E\rightarrow TM$ called the \textbf{anchor}, $\langle\ , \ \rangle_E:\se(E)\times\se(E)\rightarrow C^\infty(M)$ is a non-degenerate symmetric pairing and $\bbl\ ,\ \bbr:\se(E)\times\se(E)\rightarrow \se(E)$ is a bracket operation called the \textbf{Dorfman bracket}\footnote{The Dorfman bracket is sometimes called the Dorfman derivative or generalized Lie derivative}, such that
\begin{enumerate}
\item $a(X)\langle Y,Z\rangle_E=\la\bbl X,Y\bbr,Z\ra_E+\la Y,\bbl X,Z\bbr\ra_E$
\item $\langle\bbl X,X\bbr,Y\rangle_E=\frac{1}{2}a(Y)\langle X,X\rangle_E$
\item $\bbl X \bbl Y,Z\bbr\bbr=\bl\bl X,Y\br Z\br+\bl Y,\bl X,Z\br\br$,
\end{enumerate}
\end{Def}

\begin{Rem}
This structure is sometimes called a \textbf{Leibniz algebroid}, reflecting the fact that the operation $\bracd$ satisfies the property $(3)$, often called the Leibniz property or the Jacobi identity.
\end{Rem}
\begin{Rem}
A Courant algebroid can be also defined using the \textbf{Courant bracket}, which is a skew version of the Dorfman bracket, $
\bbl X,Y \bbr_{Cour.} \coloneqq \frac{1}{2}(\bbl X,Y\bbr-\bbl Y,X\bbr)$. This bracket, however, fails to satisfy the Jacobi identity. In the following, we use the Dorfman bracket, but the entire discussion can be carried out using the Courant bracket as well.
\end{Rem}
The following example shows how a particular type of Courant algebroid arises from a Lie algebroid:
\begin{Ex}
Let $(E,\brac_E,a)$ be a Lie algebroid. Then $(E\oplus E^*,a\oplus 0, \langle\ , \ \rangle, \bbl\ ,\ \bbr)$, where $\langle\ , \ \rangle$ is given by \eqref{eq:pairing} and $\bbl\ ,\ \bbr$ is defined as 
\begin{align}\label{dorfbrac}
\bbl X+\ap,Y+\bt\bbr=&[X,Y]_E+\Lie^E_X\bt-\Lie^E_Y\ap+\rd_E\langle \ap(Y)\rangle,
\end{align}
is a Courant algebroid. For the tangent Lie algebroid $(TM,\brac,\id_{TM})$, this Courant algebroid is called the \textbf{standard Courant algebroid}.
\end{Ex}

\begin{notation*}
In the following we denote $TM\oplus T^*M \coloneqq (\TT)M$ and a section $e$ of this bundle will be written as $e=X+\ap$, i.e. the vector field part is denoted by a capital Roman letter while the one-form part is denoted by a lower-case Greek letter. The natural projection onto $TM$ given by $X+\ap \mapsto X$ will be denoted by $\pi_T$.
\end{notation*}

We end our discussion with an important observation about the standard Courant algebroid:
\begin{proposition}\label{ex:dorfman_connec}
Let $((\TT)M,\pi_T,\la \ ,\ \ra,\bbrac)$ be the standard Courant algebroid on a manifold $M$ and $\n$ any torsionless connection on $TM$. The Dorfman bracket $\bbrac$ is given by
\begin{align}
\la \bbl{e_1},e_2\bbr,e_3\ra=\la \n_{\pi_T(e_1)}e_2-\n_{\pi_T(e_2)}e_1,e_3\ra+\la\n_{\pi_T(e_3)}e_1,e_2\ra,
\end{align}
for $e_i,\ i=1,\cdots,3$ sections of $\TT(M)$.
\end{proposition}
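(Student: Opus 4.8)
The plan is to prove the identity by a direct coordinate-free expansion of both sides, writing $e_1=X+\ap$, $e_2=Y+\bt$, $e_3=Z+\gamma$ with $X,Y,Z\in\se(TM)$ and $\ap,\bt,\gamma\in\Omega^1(M)$, and letting $\n$ also denote the connection induced on $T^*M$ (hence on $(\TT)M$) by the Leibniz rule $(\n_X\ap)(Y)=X[\ap(Y)]-\ap(\n_XY)$. Because the pairing \eqref{eq:pairing} couples only the vector part of one slot with the one-form part of the other, the right-hand side breaks up into a single contraction of a vector field against $\gamma$ together with a handful of one-form evaluations; note also that both sides are $C^\infty(M)$-linear in $e_3$, which is a convenient consistency check, though the computation below does not rely on it.

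First I would isolate the purely vectorial contribution. Since $\pi_T(e_1)=X$ and $\pi_T(e_2)=Y$, expanding $\n_Xe_2-\n_Ye_1=(\n_XY-\n_YX)+(\n_X\bt-\n_Y\ap)$ and pairing against $e_3$ produces the term $\gamma(\n_XY-\n_YX)=\gamma([X,Y])$, using that $\n$ is torsionless. This reproduces exactly the contraction of the $[X,Y]$-part of the Dorfman bracket $\bbl X+\ap,Y+\bt\bbr=[X,Y]+\Lie_X\bt-\imath_Y\rd\ap$ against $e_3$.

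Next I would collect the remaining one-form terms, namely $(\n_X\bt)(Z)-(\n_Y\ap)(Z)$ coming from $\la\n_Xe_2-\n_Ye_1,e_3\ra$, and $\bt(\n_ZX)+(\n_Z\ap)(Y)$ coming from $\la\n_Ze_1,e_2\ra$, and show they assemble into $(\Lie_X\bt)(Z)-(\imath_Y\rd\ap)(Z)$. The two elementary identities needed, each a consequence of torsion-freeness, are $(\Lie_X\bt)(Z)=(\n_X\bt)(Z)+\bt(\n_ZX)$, obtained from $(\Lie_X\bt)(Z)=X[\bt(Z)]-\bt([X,Z])$ together with $[X,Z]=\n_XZ-\n_ZX$, and $\rd\ap(Y,Z)=(\n_Y\ap)(Z)-(\n_Z\ap)(Y)$, obtained similarly from the standard Cartan formula for $\rd\ap$. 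Substituting these regroups the collected one-form terms into precisely $(\Lie_X\bt)(Z)-(\imath_Y\rd\ap)(Z)$; adding back the vectorial part gives $\la[X,Y]+\Lie_X\bt-\imath_Y\rd\ap,\;Z+\gamma\ra=\la\bbl e_1,e_2\bbr,e_3\ra$, which is the asserted formula.

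No step here is genuinely difficult; the only thing to watch is the bookkeeping — tracking whether $\n$ acts on $TM$ or through the induced connection on $T^*M$, and invoking torsion-freeness only in the two places indicated, with no appeal to metric compatibility (which is neither assumed nor needed). It is worth remarking afterwards that the right-hand side, once identified with the left-hand side, is manifestly independent of the choice of torsionless $\n$, and that this proposition is the prototype for Definition \ref{def:D-bracket}, where the canonical pairing is replaced by $\eta$ and the connection is only required to be adapted rather than torsionless.
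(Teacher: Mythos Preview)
Your proof is correct and follows essentially the same approach as the paper: both decompose $e_i$ into vector and one-form parts, use torsion-freeness to get $\gamma([X,Y])$ from the vector contribution, and then regroup the one-form terms via the torsion-free identities for $\Lie_X\bt$ and $\rd\ap$. The only cosmetic difference is that the paper writes the Dorfman bracket in the equivalent form $[X,Y]+\Lie_X\bt-\Lie_Y\ap+\rd[\ap(Y)]$ rather than $[X,Y]+\Lie_X\bt-\imath_Y\rd\ap$, but the computations are otherwise line-for-line parallel.
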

\begin{proof}
To check formula \eqref{eq:ex_dorf_conn}, we write $e_i$ as $e_1=X+\ap$, $e_2=Y+\bt$ and $e_3=Z+\gamma$:
\begin{align*}
\langle \bbl X\!+\!\ap,Y\!+\!\bt\bbr,Z\!+\!\gamma\rangle&=\langle\n_X(Y\!+\!\bt)-\n_Y(X\!+\!\ap),Z\!+\!\gamma\rangle\!+\!\langle \n_Z(X\!+\!\ap),Y\!+\!\bt\rangle\\
&=\langle [X,Y],\gamma\rangle+\langle\n_X\bt-\n_Y\ap,Z\rangle+\langle\n_ZX,\bt\rangle+\langle\n_Z\ap,Y\rangle\\
&=\langle [X,Y],\gamma\rangle+\langle \Lie_X\bt-\Lie_Y\ap+\rd[\ap(Y)],Z\rangle,
\end{align*}
where we used $\langle\Lie_X\bt,Z\rangle=X\langle \bt,Z\rangle-\langle \bt,[X,Z]\rangle$.
\end{proof}

\section{Para-Hermitian Geometry}
We now recall some basic results of para-complex and para-Hermitian geometry. For more details see \cite{Cruceanu} and references therein. Canonical examples of (almost) para-Hermitian manifolds are given by the total space of the tangent and cotangent bundle of a manifold \cite{dructua2013riemannian,vilcu2011hyperhermitian}. Further examples of para-Hermitian and para-K\"ahler manifolds can be found for example in \cite{vaisman2013towards,bilagrangian}, and a classification of almost para-Hermitian manifolds is given in \cite{gadea1991classification}. For a discussion on the existence of para-Hermitian vector bundles, see \cite{bejan1993existence}.
\begin{Def}
Let $E\rightarrow M$ be a rank $2n$ vector bundle. $K\in \End(E)$ is called a \textbf{para-complex structure} on $E$ if $K^2=\id$ and the $\pm 1$ eigenbundles $E_\pm \subset E$ of $K$ have the same rank. A symmetric, non-degenerate pairing $\eta \in \se(E^*\otimes E^*)$ is called para-Hermitian if $\eta(K\cdot,K\cdot)=-\eta$. The pair $(K,\eta)$ is then called a \textbf{para-Hermitian structure} on  $E$.
\end{Def}
\begin{Rem}
The above definition forces $\eta$ to be of signature $(n,n)$ and the eigenbundles $E_\pm$ are necessarily maximally isotropic: $\eta(E_\pm,E_\pm)=0$ and $\text{Rank}(E)=n$.
\end{Rem}
\begin{Ex}\label{ex:TT}
For any manifold $M$, the \textbf{generalized tangent bundle} $(\TT)M$ is equipped with a natural para-Hermitian structure given by
\begin{align*}
K=
\begin{pmatrix}
\id_T & 0 \\
0 & -\id_{T^*}
\end{pmatrix}, \quad
\eta(X+\ap,Y+\bt)=\ap(Y)+\bt(X).
\end{align*}
\end{Ex}
\begin{Def}
An \textbf{almost para-Hermitian manifold} $\PS$ is a manifold whose tangent bundle carries a para-Hermitian structure. 
\end{Def}
\begin{Rem}
If $(\PS,K,\eta)$ is an almost para-Hermitian manifold, then $K$ is an almost para-complex structure on $T\PS$ and $\eta$ is a pseudo-Riemannian metric of split signature, forcing $\PS$ to be even-dimensional. Furthermore, the tensor $\omega\coloneqq \eta K$ is skew
\begin{align*}
\omega(X,Y)=\eta(KX,Y)=-\eta(X,KY)=-\omega(Y,X),
\end{align*}
and nondegenerate (because $\eta$ is nondegenerate). Therefore, $\omega$ is an almost symplectic form, sometimes called the \textbf{fundamental form}.  It follows that the $\pm 1$ eigenbundles $T_\pm\subset T\PS$ of $K$ are isotropic with respect to both $\eta$ and $\omega$.
\end{Rem}
\begin{Def}
Let $(\PS,K,\eta)$ be an almost para-Hermitian manifold. The \textbf{para-Hermitian projections} $\PPpm$ are defined by
\begin{align}\label{eq:projections}
\PPpm\coloneqq\frac{1}{2}(\id\pm K):\ T\PS\rightarrow T_\pm
\end{align}
The projections satisfy $\PPpm^{\! 2}=\PPpm$, $\im(\PPpm)=T_\pm$ and $\PP+\PPt=\id$.
\end{Def}
\begin{notation*}
The data $(\PS,K,\eta)$, $(\PS,\eta,\omega)$ and $(\PS,K,\omega)$ on an almost para-Hermitian manifold are equivalent and so we may use the different triples interchangeably. We will denote the splitting of vector fields corresponding to the $\pm 1$ eigenbundles of $K$ by small letters with $\pm$ subscripts, e.g. $X=x_++x_-$, where $x_+=\PP(X)$ and $\xt=\PPt(X)$.
\end{notation*}

The metric $\eta$ induces the following isomorphism.
\begin{lemma}
Let $(\PS,\eta,K)$ be an almost para-Hermitian manifold. Let also $T_\pm^*$ be the dual vector bundles to $T_\pm$. The bundles $T_\pm$ and $T_\mp^*$ are then isomorphic,
\begin{align*}
T_\pm \ {\simeq}\ T_\mp^*,
\end{align*}
via contraction with $\eta$:
\begin{align*}
\eta: T_\pm &\rightarrow T^*_\mp\\
x_\pm &\mapsto \eta(x_\pm),
\end{align*}
where we used the shorthand notation $\eta(x_\pm)\coloneqq\eta(x_\pm,\cdot)$.
\end{lemma}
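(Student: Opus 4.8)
The statement asserts that the map $\eta\colon T_\pm \to T^*_\mp$, $x_\pm \mapsto \eta(x_\pm,\cdot)$, is a vector bundle isomorphism. The plan is to verify this fibrewise and then note that linearity and smoothness are automatic, since $\eta$ is a smooth tensor field. Since both $T_\pm$ and $T^*_\mp$ have rank $n$ (as recorded in the Remark following the definition of a para-Hermitian structure), it suffices to show that the linear map $x_\pm \mapsto \eta(x_\pm,\cdot)|_{T_\mp}$ is injective on each fibre; surjectivity and hence bijectivity then follow by dimension count.

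First I would check that the map actually lands in $T^*_\mp$ rather than merely in $T^*\PS$. A priori $\eta(x_\pm,\cdot)$ is a covector on all of $T\PS = T_+\oplus T_-$, but because the eigenbundles are isotropic, $\eta(x_\pm, y_\pm)=0$ for all $y_\pm \in T_\pm$, so $\eta(x_\pm,\cdot)$ annihilates $T_\pm$ and therefore restricts to a well-defined element of $T^*_\mp$ (identifying $T^*_\mp$ with the annihilator of $T_\pm$ inside $T^*\PS$, using the splitting $T\PS = T_+ \oplus T_-$). Second, for injectivity: suppose $x_\pm \in T_\pm$ satisfies $\eta(x_\pm, y_\mp)=0$ for all $y_\mp \in T_\mp$. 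Combined with $\eta(x_\pm, y_\pm)=0$ for all $y_\pm \in T_\pm$ (isotropy again), this gives $\eta(x_\pm, Y)=0$ for every $Y \in T\PS$, hence $x_\pm = 0$ by nondegeneracy of $\eta$. Thus the map is injective on fibres, and by equality of ranks it is a fibrewise isomorphism; being induced by a smooth tensor, it is a smooth bundle map with smooth inverse, completing the proof.

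I do not expect any real obstacle here — the argument is a routine application of isotropy of the eigenbundles together with nondegeneracy of $\eta$. The only point requiring a little care is the identification of $T^*_\mp$ with the annihilator $\mathrm{Ann}(T_\pm) \subset T^*\PS$; once the direct sum decomposition $T\PS = T_+\oplus T_-$ is fixed, this identification is canonical, and one should state explicitly that it is this identification under which $\eta$ realizes the isomorphism $T_\pm \simeq T^*_\mp$. Everything else is immediate.
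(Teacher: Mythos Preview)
Your argument is correct and follows the same line as the paper's own proof: use isotropy of $T_\pm$ to see that $\eta(x_\pm,\cdot)$ lands in $T^*_\mp$, then invoke nondegeneracy of $\eta$ (together with equality of ranks) to conclude the map is an isomorphism. The paper's version is simply terser, stating these two facts without spelling out the fibrewise injectivity or the annihilator identification you make explicit.
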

\begin{proof}
Both $\Tp$ and $\Tm$ are isotropic with respect to $\eta$, therefore $\eta(x_\pm,\cdot)\subset T_\mp^*$. Because $\eta$ is non-degenerate, this is an isomorphism and $\eta(T_\pm)=T^*_\mp$.
%
\end{proof}
This leads to the following vector bundle isomorphisms, central to our construction \cite{freidel2017generalised}:
\begin{Def}\label{eta_iso}
Let $(\PS,\eta,\omega)$ be an almost para-Hermitian manifold. Define the maps $\rho_\pm$ by
\begin{align}
\begin{aligned}
\rho_\pm:\ T\PS=&\Tp\oplus \Tm\rightarrow T_\pm\oplus T_\pm^*\\
X=&x_++\xt \mapsto x_\pm+\eta(x_\mp).
\end{aligned}
\end{align}
\end{Def}
If $T_+$ is Frobenius integrable, $T_+\oplus T_+^*\simeq (\TT)\FF_+$ and so $\rho_+$ maps
\begin{align*}
\rho_+: T\PS\rightarrow (\TT)\FF_+.
\end{align*}
Similarly, if $T_-$ is Frobenius integrable, then
\begin{align*}
\rho_-:T\PS\rightarrow (\TT)\FF_-.
\end{align*}

\subsection{Integrability}
The integrability of $K$ is similarly to the complex case governed by the \textbf{Nijenhuis tensor}
\begin{align}\label{eq:nijenhuis}
\begin{aligned}
N_K(X,Y)&\coloneqq\frac{1}{4}\left( [X,Y]+[KX,KY]-K([KX,Y]+[X,KY])\right)\\
&=\frac{1}{4}\left( (\n_{KX}K)Y+(\n_XK)KY-(\n_{KY}K)X-(\n_YK)KX\right)\\
&=\PP[\PPt X,\PPt Y]+\PPt[\PP X,\PP Y]),
\end{aligned}
\end{align}
where $\n$ is any torsionless connection and $X,Y,Z$ are any vector fields of $T\PS$. We will also employ the following notation:
\begin{align*}
N_K(X,Y,Z)\coloneqq\eta(N_K(X,Y),Z),\quad N_\pm(X,Y,Z)\coloneqq N_K(\PPpm X,\PPpm Y,\PPpm Z).
\end{align*}
It follows that
\begin{align*}
N_K(X,Y,Z)=N_+(X,Y,Z)+N_-(X,Y,Z).
\end{align*}

We say that $K$ is integrable if $N_K=0$. From \eqref{eq:nijenhuis} it is apparent that $K$ is integrable if and only if both $\Tp$ and $\Tm$ are simultaneously Frobenius integrable, i.e. $\brac$-involutive distributions in $T\PS$. However, unlike in complex geometry, the integrabilities of $\Tp$ and $\Tm$ are independent: $T_+$ can be integrable while $T_-$ is not and vice versa. This gives rise to the notion of ``half-integrability'':
\begin{Def}\label{def:pnhermitian}
Let $(\PS,K,\eta)$ be an almost para-Hermitian manifold with $T_\pm$ the $\pm 1$ eigenbundles of $K$. If $\Tp$ (resp., $\Tm$) is an integrable distribution, we say $(\PS,K,\eta)$ is a \textbf{$p$-para-Hermitian} (\textbf{$n$-para-Hermitian}) manifold. If $(\PS,K,\eta)$ is both $p$-para-Hermitian and $n$-para-Hermitian manifold, we simply call it \textbf{para-Hermitian}.
\end{Def}

\subsection{Type Decomposition}
The splitting of the tangent bundle of any para-complex manifold gives rise to a decomposition of tensors analogous to the $(p,q)$-decomposition in complex geometry. Denote $\Lambda^{(+k,-0)}(T^*\mathcal{P})\coloneqq \Lambda^k(\Tp^*)$ and $\Lambda^{(+0,-k)}(T^*\mathcal{P})\coloneqq\Lambda^k(\Tm^*)$. The splitting is then
\begin{align}\label{eq_plusminus_decomp}
\Lambda^k (T^*\mathcal{P})=\bigoplus_{k=m+n}\Lambda^{(+m,-n)}(T^*\mathcal{P}),
\end{align}
with corresponding sections denoted as $\Omega^{(+m,-n)}(\mathcal{P})$. We note that the fundamental form of a para-Hermitian manifold is a $(+1,-1)$ form, since both $T_\pm$ are Lagrangian with respect to $\omega$.

\subsection{Classes of Para-Hermitian Manifolds}
We now introduce various important classes of (almost) para-Hermitian manifolds. Let $\lc$ be the Levi-Civita connection of the pseudo-Riemannian metric $\eta$. We define the tensor
\begin{align*}
\Phi(X,Y,Z)\coloneqq \eta((\lc_XK)Y,Z)=\lc_X\omega(Y,Z),
\end{align*}
which has the following property:
\begin{lemma}\label{lem:Phi}
The tensor $\Phi$ satisfies
\begin{align*}
\Phi(X,KY,KZ)=\Phi(X,Y,Z),\quad \Phi(X,\PP Y,\PPt Z)=\Phi(X,\PPt Y,\PP Z)=0,
\end{align*}
for any vector fields $X,Y,Z$. Equivalently,
\begin{align*}
\lc_X\omega(KX,KY)=\lc_X\omega(Y,Z),\quad \lc_X\omega(\PP Y,\PPt Z)=\lc_X\omega(\PPt Y,\PP Z)=0.
\end{align*}
\end{lemma}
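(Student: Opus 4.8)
The plan is to exploit two structural facts: that $\omega$ is skew and that $\lc$ is metric-compatible with $\eta$ (equivalently, $\lc\eta = 0$), together with the algebraic relation $\eta(K\cdot,K\cdot) = -\eta$ which encodes that $K$ is para-Hermitian with respect to $\eta$. First I would establish the first identity $\Phi(X,KY,KZ) = \Phi(X,Y,Z)$. Since $\Phi(X,Y,Z) = \eta((\lc_X K)Y,Z)$, I would start from the Leibniz rule for $\lc_X$ applied to the tensor equation $\eta(KY,KZ) = -\eta(Y,Z)$. Differentiating and using $\lc\eta=0$ gives $\eta((\lc_X K)Y, KZ) + \eta(KY, (\lc_X K)Z) = 0$, i.e. $\Phi(X,Y,KZ) = -\eta(KY,(\lc_X K)Z)$. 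Separately, differentiating $K^2 = \id$ yields $(\lc_X K)K + K(\lc_X K) = 0$, so $(\lc_X K)$ anticommutes with $K$; hence $\eta(KY, (\lc_X K)Z) = -\eta(KY, K(\lc_X K)K^{-1}Z)$—wait, more cleanly: $K(\lc_X K) = -(\lc_X K)K$ means $(\lc_X K)(KZ) = -K(\lc_X K)Z$. Substituting $Z \to KZ$ into $\Phi(X,Y,KZ) = -\eta(KY,(\lc_X K)Z)$ and using this anticommutation, I expect to land on $\Phi(X,KY,KZ) = \Phi(X,Y,Z)$ after one more application of $\eta(K\cdot,K\cdot)=-\eta$.

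Next, for the vanishing statements $\Phi(X,\PP Y,\PPt Z) = \Phi(X,\PPt Y,\PP Z) = 0$, I would use the first identity. Recall $\PPpm = \tfrac12(\id \pm K)$, so $K\PP = \PP$ and $K\PPt = -\PPt$. Applying the first identity with $Y \to \PP Y$, $Z \to \PPt Z$: $\Phi(X, \PP Y, \PPt Z) = \Phi(X, K\PP Y, K\PPt Z) = \Phi(X, \PP Y, -\PPt Z) = -\Phi(X,\PP Y,\PPt Z)$, forcing it to be zero; the other case is symmetric. The equivalent reformulations in terms of $\lc_X\omega$ follow immediately from $\Phi(X,Y,Z) = \lc_X\omega(Y,Z)$, which itself comes from $\omega = \eta K$ and $\lc\eta = 0$: $(\lc_X\omega)(Y,Z) = \eta((\lc_X K)Y,Z)$.

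I do not anticipate a genuine obstacle here; the lemma is an exercise in tensor calculus. The one point requiring care is bookkeeping the anticommutation $(\lc_X K)K = -K(\lc_X K)$ correctly alongside the sign in $\eta(K\cdot,K\cdot) = -\eta$, since two sign flips are in play and it is easy to get an overall sign wrong in the first identity—I would double-check it by testing on the split form where $K = \mathrm{diag}(\id,-\id)$ and $\lc_X K$ is purely off-diagonal (which it must be, being $K$-anticommuting). Once the first identity is secured, the rest is a two-line consequence via the projector eigenvalue relations $K\PPpm = \pm\PPpm$.
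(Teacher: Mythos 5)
Your proposal is correct and follows essentially the same route as the paper: both rest on the anticommutation $(\lc_XK)K=-K(\lc_XK)$ obtained from differentiating $K^2=\id$, combined with $\eta(K\cdot,K\cdot)=-\eta$, and both derive the vanishing of the mixed components from the first identity via $K\PPpm=\pm\PPpm$. The paper just takes the slightly more direct path of expanding $\Phi(X,KY,KZ)=\eta((\lc_XK)KY,KZ)$ in one line rather than differentiating the compatibility relation $\eta(K\cdot,K\cdot)=-\eta$, but the ingredients and the sign bookkeeping are identical.
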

\begin{proof}
Because $K^2=\id$, $\lc K$ and $K$ anticommute and so
\begin{align*}
\Phi(X,KY,KZ)&=\eta((\lc_XK)KY,KZ)=-\eta(K(\lc_XK)Y,KZ)=\eta(\lc_XK)Y,Z)\\
&=\Phi(X,Y,Z).
\end{align*}
This implies $\Phi(X,\PP Y,\PPt Z)=-\Phi(X,\PP Y,\PPt Z)=0$, hence the second equality.
\end{proof}
$\Phi$ can also be used to express the Nijenhuis tensor:
\begin{align}
N_\pm(X,Y,Z)&=\frac{1}{2}\left[\Phi(\PPpm X,\PPpm Y,\PPpm Z)-\Phi(\PPpm Y,\PPpm X,\PPpm Z)\right]
\end{align}

We now introduce the following nomenclature \cite{Ivanov}:
\begin{Def}
Let $(\PS,\eta,\omega)$ be an almost para-Hermitian manifold.

\begin{itemize}
\item If $\Phi(X,Y,Z)+\Phi(Y,X,Z)=0$, or equivalently if $\Phi(X,Y,Z)$ is fully skew, we call $(\PS,\eta,\omega)$ a \textbf{nearly para-K\"ahler manifold}.
\item If $\rd\omega=0$, we call $(\PS,\eta,\omega)$ an \textbf{almost para-K\"ahler manifold}, and simply a \textbf{para-K\"ahler manifold} if $N_K=0$.
\end{itemize} 
\end{Def}

\begin{Rem}
A para-K\"ahler manifold $(\PS,\eta,\omega)$ can be seen as a symplectic manifold with a preferred choice of Lagrangian distributions $T_\pm$, the unique Lagrangians of $\omega$ isotropic with respect to $\eta$. Such symplectic manifolds are called bi-Lagrangian. For more details, see \cite{bilagrangian,homogeneous-parakahler}.
\end{Rem}

We have the following series of properties and relationships between the special cases of para-Hermitian manifolds, starting from a statement analogous to Hermitian/K\"ahler geometry.
\begin{lemma}\label{lem:herm-kahl}
Let $(\PS,K,\eta)$ be an almost para-Hermitian manifold. Then $(\PS,K,\eta)$ is para-K\"ahler if and only if $\lc K=0$ (or equivalently $\lc \omega=0$), where $\lc$ is the Levi-Civita connection of $\eta$.
\end{lemma}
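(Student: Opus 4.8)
The plan is to translate everything into a type-decomposition statement about the tensor $\Phi(X,Y,Z)=\eta((\lc_XK)Y,Z)=\lc_X\omega(Y,Z)$. Since $\eta$ is nondegenerate, the three conditions $\lc K=0$, $\lc\omega=0$ and $\Phi\equiv 0$ are equivalent, so it suffices to prove that $\Phi\equiv 0$ if and only if $(\PS,K,\eta)$ is para-K\"ahler, i.e.\ if and only if $\rd\omega=0$ and $N_K=0$. The direction ``$\lc K=0\Rightarrow$ para-K\"ahler'' is immediate: if $\Phi\equiv 0$ then $\rd\omega(X,Y,Z)=\Phi(X,Y,Z)+\Phi(Y,Z,X)+\Phi(Z,X,Y)=0$, using the cyclic-sum formula for a $2$-form with respect to the torsionless connection $\lc$, and each $N_\pm$ vanishes since it is assembled from $\Phi$ by the formula preceding the lemma.

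For the converse, assume $\rd\omega=0$ and $N_K=0$. First I would record two structural facts about $\Phi$. By Lemma~\ref{lem:Phi} it vanishes on mixed types in its last two slots, so $\Phi(X,Y,Z)=\Phi(X,\PP Y,\PP Z)+\Phi(X,\PPt Y,\PPt Z)$, and after decomposing the first slot as well, $\Phi$ is determined by the four ``type components'' $\Phi(\PP X,\PP Y,\PP Z)$, $\Phi(\PPt X,\PP Y,\PP Z)$, $\Phi(\PP X,\PPt Y,\PPt Z)$, $\Phi(\PPt X,\PPt Y,\PPt Z)$. Second, because $\omega$ is skew, $\Phi(X,\cdot,\cdot)$ is antisymmetric in its last two arguments.

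The two ``pure'' components are killed by integrability: since $N_+(X,Y,Z)=\tfrac12[\Phi(\PP X,\PP Y,\PP Z)-\Phi(\PP Y,\PP X,\PP Z)]$, the hypothesis $N_+=0$ says $\Phi(\PP X,\PP Y,\PP Z)$ is symmetric in its first two slots; being also skew in its last two, it must vanish, because a $3$-tensor that is symmetric in one index pair and antisymmetric in an overlapping pair is zero (chase the two symmetries around the three slots). Likewise $\Phi(\PPt X,\PPt Y,\PPt Z)=0$ from $N_-=0$. The two ``mixed'' components are killed by $\rd\omega=0$: evaluating the cyclic-sum identity on $\rd\omega(\PPt X,\PP Y,\PP Z)=0$, the two cyclic companions $\Phi(\PP Y,\PP Z,\PPt X)$ and $\Phi(\PP Z,\PPt X,\PP Y)$ each have mixed type in their last two slots and so vanish by Lemma~\ref{lem:Phi}, leaving $\Phi(\PPt X,\PP Y,\PP Z)=0$; the symmetric evaluation gives $\Phi(\PP X,\PPt Y,\PPt Z)=0$. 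All four components vanish, hence $\Phi\equiv 0$ and $\lc K=0$.

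The main obstacle is organizational rather than computational: one must notice that the four type-components of $\Phi$ separate into pure ones, controlled by $N_K=0$, and mixed ones, controlled by $\rd\omega=0$, and then deploy Lemma~\ref{lem:Phi}, the cyclic-sum formula for $\rd\omega$, and the elementary symmetry observation in the right order. The only point requiring genuine care is the cyclic-sum identity $\rd\omega(X,Y,Z)=\Phi(X,Y,Z)+\Phi(Y,Z,X)+\Phi(Z,X,Y)$, which relies on $\lc$ being torsionless; everything else is bookkeeping over the eigenbundle decomposition $T\PS=\Tp\oplus\Tm$.
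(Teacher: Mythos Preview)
Your argument is correct. The forward direction is immediate, and in the converse you correctly observe that Lemma~\ref{lem:Phi} reduces $\Phi$ to four type components, then use $N_\pm=0$ together with the symmetric/antisymmetric trick to kill the pure components and the cyclic expansion of $\rd\omega$ to kill the mixed ones. The symmetry chase $T(a,b,c)=T(b,a,c)=-T(b,c,a)=\cdots=-T(a,b,c)$ is exactly the right elementary fact.

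As for comparison: the paper does not actually prove this lemma. It simply remarks that the proof is analogous to the Hermitian/K\"ahler case and cites \cite[Theorem 5.5]{Moroianu}. Your proof is therefore more, not less, than what the paper provides: it is a self-contained argument that uses precisely the tools the paper has already set up (Lemma~\ref{lem:Phi} and the expression of $N_\pm$ in terms of $\Phi$). The classical K\"ahler proof in Moroianu proceeds via an explicit formula relating $\lc\omega$, $\rd\omega$ and $N$ written out in a local frame; your type-decomposition argument is cleaner in the para-Hermitian setting because the eigenbundle projections $\PPpm$ are globally defined and the key vanishing $\Phi(X,\PP Y,\PPt Z)=0$ has already been isolated. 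Either route works; yours integrates better with the surrounding text.
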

\begin{proof}
The proof is similar to the proof of the analogous statement in complex geometry. See for example \cite[Theorem 5.5]{Moroianu}.
\end{proof}

Of course, any almost para-K\"ahler manifold is nearly para-K\"ahler. Furthermore, we have the following
\begin{lemma}
The Nijenhuis tensor of a nearly para-K\"ahler manifold is fully skew.
\end{lemma}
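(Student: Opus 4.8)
The plan is to compute the Nijenhuis tensor using the identity $N_\pm(X,Y,Z)=\tfrac12[\Phi(\PPpm X,\PPpm Y,\PPpm Z)-\Phi(\PPpm Y,\PPpm X,\PPpm Z)]$ established just above, together with the defining property of a nearly para-K\"ahler manifold, namely that $\Phi(X,Y,Z)+\Phi(Y,X,Z)=0$, i.e. $\Phi$ is skew in its first two slots. First I would observe that on a nearly para-K\"ahler manifold the antisymmetrization in the first two arguments that appears in the formula for $N_\pm$ is redundant: since $\Phi(\PPpm Y,\PPpm X,\PPpm Z)=-\Phi(\PPpm X,\PPpm Y,\PPpm Z)$, we get $N_\pm(X,Y,Z)=\Phi(\PPpm X,\PPpm Y,\PPpm Z)$, and hence $N_K(X,Y,Z)=\Phi(\PP X,\PP Y,\PP Z)+\Phi(\PPt X,\PPt Y,\PPt Z)$.

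Next I would argue that $N_K$ so expressed is totally skew-symmetric. Skewness in the first two arguments is immediate from the skewness of $\Phi$ in its first two slots (which survives the projections since $\PPpm$ are applied slotwise). By Lemma \ref{lem:Phi}, $\Phi$ is also symmetric under swapping its last two arguments after applying $K$, and in particular $\Phi(X,\PP Y,\PPt Z)=\Phi(X,\PPt Y,\PP Z)=0$, so each summand $\Phi(\PPpm X,\PPpm Y,\PPpm Z)$ already lives in the ``pure type" $\Lambda^{(\pm 3)}$ piece. On a nearly para-K\"ahler manifold $\Phi$ is \emph{fully} skew, so each of the two summands $\Phi(\PP\cdot,\PP\cdot,\PP\cdot)$ and $\Phi(\PPt\cdot,\PPt\cdot,\PPt\cdot)$ is separately totally skew-symmetric, being the restriction of a totally skew tensor to a subbundle; adding the two pure-type pieces then gives a totally skew $N_K$.

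In fact the cleanest route is simply: on a nearly para-K\"ahler manifold $\Phi$ is totally antisymmetric by hypothesis, and $N_K(X,Y,Z)=\Phi(\PP X,\PP Y,\PP Z)+\Phi(\PPt X,\PPt Y,\PPt Z)$ is then a sum of two totally antisymmetric tensors (precomposition of a skew tensor with the same projection in each slot preserves skewness), hence totally antisymmetric. I would present it in roughly that order: (i) reduce $N_\pm$ to $\Phi(\PPpm X,\PPpm Y,\PPpm Z)$ using the skewness in the first two slots, (ii) conclude $N_K=\Phi(\PP\cdot,\PP\cdot,\PP\cdot)+\Phi(\PPt\cdot,\PPt\cdot,\PPt\cdot)$, (iii) note each term is totally skew because $\Phi$ is and the same projector sits in every slot. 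The only subtlety worth a sentence is step (iii): one should check that applying the \emph{same} endomorphism $\PPpm$ to all three arguments of a totally antisymmetric trilinear form yields again a totally antisymmetric form — which is clear, since any permutation of the arguments permutes the inserted $\PPpm$'s identically. There is no real obstacle here; it is a short type-decomposition argument and the main thing is to invoke Lemma \ref{lem:Phi} and the definition of nearly para-K\"ahler correctly.
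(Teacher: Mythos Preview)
Your proposal is correct and follows essentially the same route as the paper: reduce $N_\pm$ to (a sign times) $\Phi(\PPpm\cdot,\PPpm\cdot,\PPpm\cdot)$ using the first-two-slot skewness of $\Phi$, and then invoke that on a nearly para-K\"ahler manifold $\Phi$ is fully skew, so its pullback by the same projector in every slot is again fully skew. The only cosmetic difference is that the paper re-derives the identity $N_\pm=\pm\Phi(\PPpm\cdot,\PPpm\cdot,\PPpm\cdot)$ from the Levi-Civita expression for the Lie bracket, whereas you start directly from the displayed formula $N_\pm=\tfrac12[\Phi(\PPpm X,\PPpm Y,\PPpm Z)-\Phi(\PPpm Y,\PPpm X,\PPpm Z)]$; the substance is the same.
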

\begin{proof}
\begin{align*}
N_\pm(X,Y,Z)=\eta([\PPpm X,\PPpm Y],\PPpm Z)&=\eta((\lc_{\PPpm X}\PPpm) Y-(\lc_{\PPpm Y}\PPpm) X,\PPpm Z)\\
&=\pm \Phi(\PP X,\PP Y,\PP Z),
\end{align*}
therefore $N$ is fully skew.
\end{proof}

%

The following lemma describes the relationship between integrability of $K$ and the $(+3,-0)$ and $(+0,-3)$ parts of $\rd \omega$.

\begin{lemma}\label{lem:30integr}
Let $(\PS,K,\omega)$ be an almost para-Hermitian manifold. Then the following formulas hold:
\begin{align*}
(\rd\omega)^{+3,-0}(X,Y,Z)&=\mkern-18mu\sum_{Cycl.\ X,Y,Z}\mkern-18muN_+(X,Y,Z)\\
(\rd\omega)^{+0,-3}(X,Y,Z)&=-\mkern-18mu\sum_{Cycl.\ X,Y,Z}\mkern-18mu N_-(X,Y,Z).
\end{align*}
Therefore, $T_+$ (resp., $T_-$) being integrable is a sufficient condition for $\rd\omega^{(+3,-0)}$ (resp., $\rd\omega^{(+0,-3)}$) to vanish. If $(\PS,K,\omega)$ is nearly para-K\"ahler, then it is also a necessary condition.
\end{lemma}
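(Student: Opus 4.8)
The plan is to compute the relevant type components of $\rd\omega$ directly using the invariant formula for the exterior derivative of a $2$-form, together with the fact that $\omega$ is a $(+1,-1)$-form, so that $\omega(\PP X,\PP Y)=\omega(\PPt X,\PPt Y)=0$ for all $X,Y$. Concretely, I would start from
\begin{align*}
\rd\omega(X,Y,Z)=X\omega(Y,Z)-Y\omega(X,Z)+Z\omega(X,Y)-\omega([X,Y],Z)+\omega([X,Z],Y)-\omega([Y,Z],X),
\end{align*}
and then substitute $X\to \PP X$, $Y\to \PP Y$, $Z\to \PP Z$ to isolate the $(+3,-0)$ component. Since $\omega$ restricted to two $T_+$-arguments vanishes, all the ``derivative'' terms $\PP X\,\omega(\PP Y,\PP Z)$ etc. drop out, and we are left with $-\omega([\PP X,\PP Y],\PP Z)+\omega([\PP X,\PP Z],\PP Y)-\omega([\PP Y,\PP Z],\PP X)$. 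Because $T_+$ is isotropic for $\omega$, only the $T_-$-component of each bracket survives when paired against a $T_+$-vector; and by the Nijenhuis formula $\PPt[\PP X,\PP Y]=N_K(X,Y)$ when all arguments are already in $T_+$. Rewriting $\omega(N_K(\PP X,\PP Y),\PP Z)=\eta(KN_K(\PP X,\PP Y),\PP Z)=-\eta(N_K(\PP X,\PP Y),\PP Z)=-N_+(X,Y,Z)$ (using $N_K$ valued in $T_-$, on which $K=-\id$), one collects the three terms into $\sum_{Cycl.}N_+(X,Y,Z)$, up to checking signs and the skew-symmetry bookkeeping. The $(+0,-3)$ case is identical with $\PP\to\PPt$, the only change being that on $T_+$ one has $K=+\id$, which flips the overall sign and produces the stated $-\sum_{Cycl.}N_-$.

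For the sufficiency claim, integrability of $T_+$ means precisely $\PPt[\PP X,\PP Y]=0$, i.e. $N_+=0$, so the formula just derived gives $\rd\omega^{(+3,-0)}=0$ immediately; likewise for $T_-$. For the converse under the nearly para-K\"ahler hypothesis, I would invoke the lemma already proved above that in the nearly para-K\"ahler case $N_\pm(X,Y,Z)=\pm\Phi(\PPpm X,\PPpm Y,\PPpm Z)$ and that $N$ is fully skew. Full skewness means the cyclic sum $\sum_{Cycl.}N_+(X,Y,Z)$ equals $3N_+(X,Y,Z)$ (or, being careful, that $N_+$ is already totally antisymmetric so the cyclic sum vanishes iff $N_+$ itself vanishes). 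Hence $\rd\omega^{(+3,-0)}=0$ forces $N_+=0$, which is exactly integrability of $T_+$; symmetrically for $T_-$.

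I expect the main obstacle to be purely bookkeeping: getting the signs and the normalization factor of $\tfrac14$ in the Nijenhuis tensor to line up with the cyclic-sum convention in the statement, and making sure that the passage from ``the cyclic sum of $N_+$ vanishes'' to ``$N_+$ vanishes'' is justified only in the nearly para-K\"ahler case (where $N$ is totally skew) and not claimed in general. A secondary subtlety is being careful that $\omega([\PP X,\PP Y],\PP Z)$ really only sees $\PPt[\PP X,\PP Y]$: this uses $\omega(\Tp,\Tp)=0$, which is the isotropy of $T_+$ for $\omega$ noted in the remarks, so it should be stated explicitly. Everything else is a routine unravelling of definitions.
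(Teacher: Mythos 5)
Your proposal is correct and follows essentially the same route as the paper's proof: apply the Cartan formula to projected arguments, use the $\omega$-isotropy of $T_\pm$ to kill the derivative terms and reduce the bracket terms to $N_\pm$ via $\omega=\eta K$, and invoke full skewness of $N$ in the nearly para-K\"ahler case for the converse. Your explicit sign-tracking through $K=\mp\id$ on $T_\mp$ and your care about when the cyclic sum controls $N_\pm$ itself are sound and, if anything, more detailed than the paper's version.
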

\begin{proof}
We make use of the Cartan formula for the exterior derivative of a two-form:
\begin{align*}
\rd \omega (X,Y,Z)=\mkern-18mu\sum_{Cycl.\ X,Y,Z}\mkern-18mu X[\omega(Y,Z)]-\omega([X,Y],Z).
\end{align*}
Because $\rd \omega^{+3,-0}(X,Y,Z)=\rd\omega(\xp,\yp,\zp)$,
\begin{align*}
(\rd\omega)^{+3,-0}(X,Y,Z)=\mkern-18mu\sum_{Cycl.\ \xp,\yp,\zp}\mkern-18mu\eta([\xp,\yp],\zp)=\mkern-18mu\sum_{Cycl.\ X,Y,Z}\mkern-18mu N_+(X,Y,Z),
\end{align*}
where we used $\omega=\eta K$. If $(\PS,\eta,K)$ is nearly para-K\"ahler, then $N$ is fully skew and therefore $\rd \omega^{+3,-0}(X,Y,Z)=N_+(X,Y,Z)$.
\end{proof}

\subsection{The Courant Algebroids of Para-Hermitian Foliations}\label{couralgds}
We will now explore the tangent Lie algebroid on $T_+$ of a $p$-para-Hermitian manifold (see Definition \ref{def:pnhermitian}) and its the corresponding standard Courant algebroid. We then transport this courant algebroid to the tangent bundle $T\PS$, using the isomorphism $\rho_+$. This construction was presented in slightly different language in \cite{vaisman2013towards}; here we recall this construction and fill in various details important for later discussion.

Let $(\PS,\eta,\omega)$ be a $p$-para-Hermitian manifold (analogous discussion holds if $\PS$ is an $n$-para-Hermitian)and denote the foliation corresponding to $\Tp$ by $\mathcal{F}_+$. As described in the Example \ref{ex:alg2}, $(\Tp,\brac_+,\id_{T_+})$ is the tangent Lie algebroid of $\FF_+$, with the $\brac_+$ simply the restriction of the Lie bracket to $T_+$ and anchor the identity on $\Tp$. The additional differential structure given by the exterior derivative $\rd_+$, the Lie derivative $\Lie^+$ and the Dorfman bracket $\bbrac_+$, are given as follows.

Writing $\brac_+$ as $\brac$ to simplify the notation, we can express the action of $\rd_+$ and $\Lie^+$ on vectors in $\se(T_-)$ instead of one-forms in $\se(\Tp^*)$ using the isomorphism $\rho_+$ (see Definition \ref{eta_iso}). The Dorfman bracket of the Lie algebroid $T_+$, defined naturally on $T_+\oplus T_+^*$, can be consequently expressed on $T\PS=T_+\oplus T_-$ instead:
\begin{align*}
\bl x_++\xt,y_++\yt\br_+&=[x_+,y_+]+\eta^{-1}\left[\Lie^+_{x_+}\eta(\yt)-\Lie^+_{y_+}\eta(\xt)+\rd_+\eta(\xt,y_+)\right].
\end{align*}
This can be rewritten by contracting with $Z=\zp+\zt$:
\begin{align}
\begin{aligned}\label{Dorf_LL2}
\eta(\bl x_+\!\!+\!\xt,y_+\!\!+\!\yt\br_+,z_+\!+&\zt\!)\!=\!x_+[\eta(\yt,z_+\!)]\!-\!y_+[\eta(\xt,z_+\!)]\!+\!z_+[\eta(\xt,y_+\!)]\\
&\!+\!\eta([x_+,y_+],\zt)\!+\!\eta(\xt,[y_+,z_+])\!-\!\eta(\yt,[x_+,z_+]).
\end{aligned}
\end{align}
Therefore, \eqref{Dorf_LL2} defines a bracket on $T\PS$ satisfying
\begin{align*}
\rho_+ \bbl X,Y\bbr_+=\bbl \rho_+ X,\rho_+ Y\bbr_{\FF_+},
\end{align*}
where $\bracd_{\FF_+}$ is the standard Dorfman bracket on $(\TT)\FF_+$. In fact, $\rho_+$ induces a Courant algebroid structure on $T\PS$:
\begin{proposition}\label{prop:courant_alg_L}\sloppy{
Let $(\PS,\eta,K)$ be a $p$-para-Hermitian manifold.
Then $(T\PS,\PP,\eta,\bracd_+)$, where $\bracd_+$ is defined by \eqref{Dorf_LL2} and $\PP$ is the para-Hermitian projection \eqref{eq:projections}, is a Courant algebroid. Moreover, $\rho_+$ is an isomorphism of Courant algebroids
\begin{align*}
(T\PS,\PP,\eta,\bracd_+) \overset{\rho_+}{\longrightarrow}((\TT)\FF_+,\pi_+,\la\ ,\ \ra,\bbrac_{\FF_+}).
\end{align*}
This means that
\begin{align*}
\rho_+\bl X,Y\br_+=\bl \rho_+ X,\rho_+ Y\br_{\FF_+},\ \ \eta(X,Y)=\la \rho_+ X,\rho_+ Y\ra,\ \ \rho_+\PP(X)=\pi_+\rho_+(X).
\end{align*}
Analogous statement holds for $n$-para-Hermitian manifolds.}
\end{proposition}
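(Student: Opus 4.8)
The plan is to \emph{transport} the standard Courant algebroid structure on $(\TT)\FF_+$ back to $T\PS$ along the vector bundle isomorphism $\rho_+$ of Definition \ref{eta_iso}, and then to recognise the transported data as precisely $(\PP,\eta,\bracd_+)$. Since $\PS$ is $p$-para-Hermitian, $\Tp$ is Frobenius integrable, so by Example \ref{ex:alg2} the triple $(\Tp,\brac_+,\id_{\Tp})$ is the tangent Lie algebroid of $\FF_+$; hence $(\TT)\FF_+\simeq\Tp\oplus\Tp^*$ carries the standard Courant algebroid structure of Definition \ref{def:courant}, with anchor $\pi_+$, pairing \eqref{eq:pairing}, and Dorfman bracket \eqref{dorfbrac}. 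The structure obtained by pulling all of this back through the bijection $\rho_+$ is a Courant algebroid by construction, so it suffices to prove the three intertwining identities in the statement: these identify the transported anchor, pairing, and bracket with $\PP$, $\eta$, and $\bracd_+$ respectively, and thereby exhibit $\rho_+$ as an isomorphism of Courant algebroids.

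First I would dispose of the anchor and pairing identities, both immediate from the definition of $\rho_+$. For $X=\xp+\xt$ one has $\rho_+X=\xp+\eta(\xt)$, whence $\pi_+\rho_+(X)=\xp=\PP(X)$ and also $\rho_+\PP(X)=\rho_+(\xp)=\xp$, so $\rho_+\PP=\pi_+\rho_+$. For the pairing, with $Y=\yp+\yt$ formula \eqref{eq:pairing} gives $\la\rho_+X,\rho_+Y\ra=\eta(\xt)(\yp)+\eta(\yt)(\xp)=\eta(\xt,\yp)+\eta(\xp,\yt)$, which equals $\eta(X,Y)$ because $\Tp$ and $\Tm$ are $\eta$-isotropic and $\eta$ is symmetric.

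The substantive step is the bracket identity $\rho_+\bbl X,Y\bbr_+=\bbl\rho_+X,\rho_+Y\bbr_{\FF_+}$, i.e.\ that transporting the Dorfman bracket \eqref{dorfbrac} of the Lie algebroid $\Tp$ through $\rho_+$ reproduces \eqref{Dorf_LL2}. Here $\rho_+^{\inv}$ acts as $\id\oplus\eta^{\inv}$ on $\se(\Tp\oplus\Tp^*)$, so under the dictionary $\xi\leftrightarrow\eta^{\inv}(\xi)$ supplied by the isomorphism $\eta:\Tm\to\Tp^*$ the operations $\rd_+$ and $\Lie^+$ on $\se(\Tp^*)$ turn into operations on $\se(\Tm)$. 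Spelling out \eqref{dorfbrac} for $E=\Tp$ gives $\bbl\rho_+X,\rho_+Y\bbr_{\FF_+}=[\xp,\yp]+\Lie^+_{\xp}\eta(\yt)-\Lie^+_{\yp}\eta(\xt)+\rd_+\eta(\xt,\yp)$; applying $\rho_+^{\inv}$, pairing the result with $Z=\zp+\zt$, and using the Cartan-type identities $\la\Lie^+_{\xp}\eta(\yt),\zp\ra=\xp[\eta(\yt,\zp)]-\eta(\yt,[\xp,\zp])$ and $\la\rd_+\eta(\xt,\yp),\zp\ra=\zp[\eta(\xt,\yp)]$ together with $\eta([\xp,\yp],\zp)=0$ recovers term by term the six summands on the right-hand side of \eqref{Dorf_LL2}. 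This is exactly the computation already carried out immediately before \eqref{Dorf_LL2}.

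The only place requiring care — bookkeeping rather than a genuine obstacle — is this last step: one must keep straight that $\rho_+^{\inv}$ sends $\xp+\xi\in\se(\Tp\oplus\Tp^*)$ to $\xp+\eta^{\inv}(\xi)\in\se(\Tp\oplus\Tm)$, that the duality contraction of $\xi\in\Tp^*$ with $\zp$ equals $\eta(\eta^{\inv}(\xi),\zp)$, and that pairing the $\Tp$-valued part $[\xp,\yp]$ of the bracket with $Z$ picks up only its $\eta$-pairing with $\zt$; once this dictionary is fixed all signs are forced. Finally, the $n$-para-Hermitian case follows verbatim after exchanging $\Tp$ with $\Tm$, $\PP$ with $\PPt$, $\FF_+$ with $\FF_-$, and $\rho_+$ with $\rho_-$, since then $(\Tm,\brac_-,\id_{\Tm})$ is the tangent Lie algebroid of $\FF_-$.
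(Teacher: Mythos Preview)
Your proposal is correct and follows essentially the same approach as the paper: verify the three intertwining identities for $\rho_+$ (anchor, pairing, bracket), and then transport the Courant algebroid axioms from $((\TT)\FF_+,\pi_+,\la\ ,\ \ra,\bbrac_{\FF_+})$ to $(T\PS,\PP,\eta,\bracd_+)$. The only cosmetic difference is that the paper treats the bracket identity as holding ``by construction'' (since $\bracd_+$ was \emph{defined} in \eqref{Dorf_LL2} precisely as the $\rho_+$-pullback of $\bbrac_{\FF_+}$), whereas you re-derive it via the Cartan-type identities; both amount to the computation already recorded just before \eqref{Dorf_LL2}.
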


\begin{proof}
We prove the statement for the $p$-para-Hermitian case, for the $n$-para-Hermitian case the proof is identical. We first show the morphism property of $\rho_+$.

$\rho_+\bl X,Y\br_+=\bl \rho_+ X,\rho_+ Y\br_{\FF_+}$ is satisfied by construction of $\bracd_+$,
\begin{align*}
\langle \rho_+(X),\rho_+(Y) \rangle=\langle x_++\eta(\xt),y_++\eta(\yt)\rangle=\eta(x_+,\yt)+\eta(\xt,y_+)=\eta(X,Y),
\end{align*}
and $\rho_+\PP(X)=\pi_+\rho_+(X)$ is trivial.

The fact that $\eta$, $\PP$ and $\bracd_+$ satisfy the axioms of a Courant algebroid is now a direct consequence of the fact that $\la\ ,\ \ra$, $\pi_+$ and $\bbrac_{\FF_+}$ do. Explicitly, let $e_i,\ i=1,\cdots,3$ be sections of $\Tp\oplus \Tp^*$ and $X_i,\ i=1,\cdots,3$ be sections of $T\PS$ such that $\rho(X_i)=e_i$. Using that $\bbrac_{\FF_+}$ satisfies $(1)$ in Definition \ref{def:courant}, we have
\begin{align*}
\pi_+(e_1)\langle e_2,e_3\ra=\PP(X_1)\eta(X_2,X_3)=&\la \bbl e_1,e_2\bbr_{\FF_+},e_3\ra+\la e_2,\bbl e_1,e_3\bbr_{\FF_+}\ra\\
=&\eta(\bl X_1,X_2\br_+,X_3)+\eta(X_2,\bl X_1,X_3\br_+),
\end{align*}
which shows that $\bracd_+$ also satisfies the property $(1)$. Similarly for the property $(2)$. The property $(3)$ follows from the same argument;
\begin{align*}
\bbl{e_1},\bbl{e_2},e_3\bbr_{\FF_+}\bbr_{\FF_+}=\bbl{e_1},\rho_+(\bl{X_2},X_3\br_+)\bbr_{\FF_+}=\rho(\bl{X_1},\bl{X_2},X_3\br_+\br_+),
\end{align*}
which is equal to $\rho(\bl\bl{X_1},X_2\br_+X_3\br_++\bl{X_2},\bl{X_1},X_3\br_+)$ by an analogous calculation.
\end{proof}

\begin{notation*}
In the following we will denote the Dorfman brackets of $(\TT)\FF_\pm$ by $\bracd_{\FF_\pm}$ and their counterparts on $T\PS$ by $\bracd_\pm$.
\end{notation*}
\section{Adapted Connections, Courant algebroids and the D-bracket}\label{sec:adapted}
In the Section \ref{couralgds} we have seen that as long as one of the distributions $T_\pm$ of an almost para-Hermitian manifold is integrable, the tangent bundle $T\PS$ acquires the structure of a Courant algebroid $(T\PS,\PPpm,\eta,\bracd_\pm)$. Because this Courant algebroid is isomorphic to the standard Courant algebroid over the foliation of an integrable distribution $((\TT)\FF_\pm,\pi_T,\la\ ,\ \ra,\bracd)$, we can use the result of Proposition \ref{ex:dorfman_connec} to express the bracket $\bracd_\pm$ using a torsionless connection on $\FF_\pm$. Moreover, such expression for the bracket is still well defined even when none of the distributions $T_\pm$ are integrable, but the bracket is no longer a Courant algebroid bracket; in particular, it fails to satisfy the Jacobi identity\footnote{Such bracket defines a metric algebroid \cite{vaisman2012geometry}}. This procedure can then be generalized to define the D-bracket for a non-integrable almost para-Hermitian manifold. As we will see in Section \ref{sec:fluxes}, this proves to be essential for introducing the twisted D-bracket, because the fluxes twisting the bracket arise precisely as a consequence of non-integrability.
%

We start by recalling the result of Proposition \ref{ex:dorfman_connec} and applying it to the setting of para-Hermitian manifolds, leading to the following definitions
\begin{Def}\label{def:associated_bracket}
Let $(\PS,\eta,K)$ be an almost para-Hermitian manifold and $\n$ a connection on $T\PS$. We define the \textbf{bracket associated to $\n$} by
\begin{align}\label{Dbrac_unprojected}
\eta(\bl X,Y\br^{\n},Z)\coloneqq \eta(\n_{X}Y-\n_{Y}X,Z)+\eta(\n_{Z}X,Y),
\end{align}
and the \textbf{$\PPpm$-projected brackets associated to $\n$} by
\begin{align}\label{dorfman_projections}
\eta(\bl X,Y\br_\pm^{\n},Z)\coloneqq \eta(\n_{\PPpm(X)}Y-\n_{\PPpm(Y)}X,Z)+\eta(\n_{\PPpm(Z)}X,Y).
\end{align}
\end{Def}
An immediate consequence of $\PP+\PPt=\id$ is then
\begin{align}\label{eq:bracketsum}
\bracd^\n=\bracd^\n_++\bracd^\n_-.
\end{align}
We also define
\begin{Def}\label{def:adapted_conn}
Let $(\PS,\eta,K)$ be an almost para-Hermitian manifold and let $\n$ be a connection such that
\begin{align}\label{eq:adapted-con-properties}
\begin{aligned}
&1)\ \n_\xp\eta=0\\
&2)\ \n_{x_+} y_- \subset T_-,\ \forall x_+ \in \se(T_+),y_- \in \se(T_-)\\
&3)\ \eta(T^\n(x_+,y_+),z_-)=0\\
&4)\ \eta(T^\n(x_+,y_+),z_+)+\eta(\n_{z_+}x_+,y_+)=0,
\end{aligned}
\end{align}
where $T^\n$ is the torsion tensor of $\n$. We call $\n$ a \textbf{p-adapted connection} of the almost para-Hermitian manifold $(\PS,\eta,K)$. Analogously, we call $\n$ an \textbf{n-adapted connection} if conditions $1)-4)$ are satisfied with the roles of $T_\pm$ exchanged. If $\n$ is both p- and n-adapted, we call $\n$ simply an \textbf{adapted connection}.
\end{Def}
\begin{Ex}
Even though the properties $1)-4)$ of Definition \ref{def:adapted_conn} are hard to understand geometrically, there exists a special case of adapted connections which are much simpler: any para-Hermitian connection $\n$ ($\n \eta=\n K=0$) whose torsion along the eigenbundles $T_\pm$ vanishes, $T^\n(x_\pm,y_\pm)=0$, is an adapted connection.
\end{Ex}
The following theorem justifies the Definitions \ref{def:associated_bracket} and \ref{def:adapted_conn}.

\begin{theorem}\label{prop:metric-courant-TP}
Let $(\PS,\eta,K)$ be a p(n)-para-Hermitian manifold and $\n$ a connection on $T\PS$. The $\PPpm$-projected bracket associated to $\n$ is equal to the Courant algebroid bracket $\bracd_+$,
\begin{align*}
\bracd_+^\n=\bracd_+,
\end{align*}
if and only if $\n$ is a p(n)-adapted connection.
\end{theorem}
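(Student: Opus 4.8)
The plan is to compare the coordinate-free formula \eqref{Dorf_LL2} defining $\bracd_+$ with the connection formula \eqref{dorfman_projections} defining $\bracd_+^\n$, term by term, and to extract exactly the conditions $1)$--$4)$ of Definition \ref{def:adapted_conn} as the necessary and sufficient requirements for the two to agree. Since $\bracd_+^\n$ by \eqref{dorfman_projections} only ever differentiates along $\PP$-directions, it is natural to plug in $X=\xp+\xt$, $Y=\yp+\yt$, $Z=\zp+\zt$ and expand $\eta(\bl X,Y\br_+^\n,Z)=\eta(\n_{\xp}Y-\n_{\yp}X,Z)+\eta(\n_{\zp}X,Y)$ into its $T_+$-valued and $T_-$-valued pieces, using $\PP+\PPt=\id$ and the isotropy of $T_\pm$ under $\eta$ at every stage. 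The target expression \eqref{Dorf_LL2} has six terms: three "derivative" terms $\xp[\eta(\yt,\zp)]$ (and cyclic) and three "bracket" terms $\eta([\xp,\yp],\zt)$, $\eta(\xt,[\yp,\zp])$, $\eta(\yt,[\xp,\zp])$ (with signs). The strategy is to rewrite each derivative term $\xp[\eta(\yt,\zp)]$ using metric compatibility along $T_+$ (condition $1)$: $\n_{\xp}\eta=0$) as $\eta(\n_{\xp}\yt,\zp)+\eta(\yt,\n_{\xp}\zp)$, and each Lie bracket $[\xp,\yp]$ as $\n_{\xp}\yp-\n_{\yp}\xp-T^\n(\xp,\yp)$; then collect all the resulting pieces and match against the expansion of $\eta(\bl X,Y\br_+^\n,Z)$.

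Concretely I would proceed as follows. First, assume $\n$ is $p$-adapted and verify $\bracd_+^\n=\bracd_+$. Expand $\eta(\n_{\xp}Y,Z)=\eta(\n_{\xp}\yp,\zp)+\eta(\n_{\xp}\yp,\zt)+\eta(\n_{\xp}\yt,\zp)+\eta(\n_{\xp}\yt,\zt)$; the last term vanishes by condition $2)$ ($\n_{\xp}\yt\in\se(T_-)$) together with isotropy of $T_-$. Do likewise for the other two summands of $\bracd_+^\n$. The terms of the form $\eta(\n_{\xp}\yt,\zp)$ (and cyclic) are precisely what metric compatibility turns the derivative terms of \eqref{Dorf_LL2} into, after moving a $\n_{\xp}\zp$ onto $\yt$; the crucial cancellation here is that $\eta(\n_{\xp}\zp,\yt)$ appears both from differentiating $\xp[\eta(\zp,\yt)]$ and from the $\eta(\n_{\xp}Y,Z)$-expansion, and they must match with opposite signs against the torsion terms. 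The terms of the form $\eta(\n_{\xp}\yp,\zp)$ reassemble, using condition $3)$ (to drop $\eta(T^\n(\xp,\yp),\zt)=0$ so only the $T_+$-part of the Lie bracket survives in the $\eta(\cdot,\zt)$ slot) and condition $4)$ (which trades $\eta(\n_{\zp}\xp,\yp)$ for $-\eta(T^\n(\xp,\yp),\zp)$, exactly converting a covariant-derivative term into a Lie-bracket term $\eta([\xp,\yp],\zt)$), into the three bracket terms of \eqref{Dorf_LL2}. For the converse, I would run the same expansion without assuming the conditions: equality $\bracd_+^\n=\bracd_+$ for all $X,Y,Z$ forces the "extra" pieces to vanish, and by choosing $X,Y,Z$ lying purely in $T_+$ or purely in $T_-$ (and pairing against suitably chosen $Z$) one isolates each of $1)$--$4)$ separately — e.g. taking $X,Y\in\se(T_+)$, $Z\in\se(T_-)$ isolates condition $3)$; taking $Y\in\se(T_-)$ and comparing the $T_-\times T_-$ block isolates condition $2)$ and, together with the derivative terms, condition $1)$; and the remaining $T_+\times T_+\times T_+$ data gives condition $4)$.

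The main obstacle is purely bookkeeping: keeping track of which of the $2^3=8$ "type" components of each of the six triple-products is killed by isotropy, by condition $2)$, or by condition $3)$, and then verifying that the surviving terms pair up into the Cartan-type expression \eqref{Dorf_LL2} with the correct signs — in particular that the antisymmetrization inherent in $\n_{\xp}\yp-\n_{\yp}\xp$ interacts correctly with the cyclic structure of \eqref{Dorf_LL2} and with the torsion substitution in condition $4)$. A secondary subtlety is that condition $1)$ is only imposed for derivatives along $T_+$, so metric compatibility may only be used on the derivative terms $\xp[\cdots]$, never introduced artificially elsewhere; one must be careful not to assume $\n\eta=0$ globally. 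Once the type-bookkeeping is organized (for instance by splitting the verification into the "$\eta(\cdot,\zt)$-block" and the "$\eta(\cdot,\zp)$-block"), the identity falls out, and the converse direction is just the observation that the argument is reversible component by component.
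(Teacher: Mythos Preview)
Your proposal is correct and follows essentially the same route as the paper: a direct forward computation using conditions $1)$--$4)$ together with isotropy of $T_\pm$ to reduce the connection formula to \eqref{Dorf_LL2}, and a converse obtained by restricting $X,Y,Z$ to pure $T_\pm$-types to isolate each condition in turn. One small slip to fix in execution: condition $4)$ does not produce the bracket term $\eta([\xp,\yp],\zt)$ (that comes from condition $3)$); rather, combined with integrability of $T_+$, it forces the purely-$T_+$ block $\eta(\bl\xp,\yp\br_+^\n,\zp)$ to vanish, matching the corresponding vanishing on the Dorfman side.
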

\begin{proof}
We prove the statement for $\bracd^{\n}_+$, the proof for $\bracd^{\n}_-$ is analogous. For the forward implication, we use the properties \eqref{eq:adapted-con-properties}, the integrability of $T_+$ and the fact that both $T_+$ and $T_-$ are isotropic with respect to $\eta$ to directly compute the expression for $\bracd^{\n}_+$:
\begin{align*}
\eta(\bl X,Y\br^{\n}_+,Z)&=\eta(\n_\xp Y-\n_\yp X,Z)+\eta(\n_\zp X,Y)\\
&=\eta([x_+,y_+],\zt)+\eta(\n_{x_+}\yt-\n_{y_+}\xt,z_+)\\
&+\eta(\n_{z_+}x_+,\yt)+\eta(\n_{z_+}\xt,y_+)\\
&=\eta([x_+,y_+],\zt)+x_+[\eta(\yt,z_+)]-\eta(\yt,\n_{x_+}z_+)-y_+[\eta(\xt,z_+)]\\
&+\eta(\xt,\n_{y_+}z_+)+\eta(\n_{z_+}x_+,\yt)+z_+[\eta(\xt,y_+)]-\eta(\xt,\n_{z_+}y_+)\\
&=x_+[\eta(\yt,z_+)]-y_+[\eta(\xt,z_+)]+z_+[\eta(\xt,y_+)]\\
&+\eta([x_+,y_+],\zt)+\eta(\xt,[y_+,z_+])-\eta(\yt,[x_+,z_+]),
\end{align*}
which coincides with the expression \eqref{Dorf_LL2} for the Dorfman bracket $\bracd_+$. Therefore, if $\n$ is adapted, we get $\bracd^\n_+=\bracd_+$.

We now show the converse statement, i.e. that the formula \eqref{dorfman_projections} gives the brackets $\brac_+$ only if $\n$ is adapted. On one hand, we have by \eqref{Dorf_LL2}
\begin{align*}
\eta(\bl \xp,\yp\br_+,\zt)=\eta([\xp,\yp],\zp)=0,
\end{align*}
but on the other hand, by \eqref{dorfman_projections},
\begin{align*}
\eta(\bl \xp,\yp\br_+^\n,\zt)=\eta(\n_\xp\yp-\n_\yp\xp,\zt),
\end{align*}
which means $\eta(T^\n(\xp,\yp),\zt)=0$. Similarly, we find
\begin{align*}
\eta(\bl \xp,\yt\br_+,\zt)=0=\eta(\bl \xp,\yt\br^\n_+,\zt)=\eta(\n_\xp\yt,\zt),
\end{align*}
which implies $\n_\xp\yt \in \se(T_-)$. Considering another component of the bracket, we find
\begin{align*}
\eta(\bl \xp,\yt\br_+,\zp)&=\xp\eta(\yt,\zp)-\eta(\yt,[\xp,\zp])\\
&=\xp\eta(\yt,\zp)-\eta(\yt,\n_\xp\zp-\n_\zp\xp),
\end{align*}
since $\eta(T^\n(\xp,\yp),\zt)=0$. This is by assumption equal to
\begin{align*}
\eta(\bl \xp,\yt\br^\n_+,\zp)=\eta(\n_\xp\yt,\zp)+\eta(\n_\zp\xp,\yt),
\end{align*}
which means that
\begin{align*}
\xp\eta(\yt,\zp)-\eta(\yt,\n_\xp\zp)=\eta(\n_\xp\yt,\zp),
\end{align*}
i.e. $\n_\xp \eta=0$. Lastly, we consider
\begin{align*}
\eta(\bl \xp,\yt\br^\n_+,\zp)=0&=\eta(\n_\xp\yp-\n_\yp\xp,\zp)+\eta(\n_\zp\xp,\yp)\\
&=\eta(T^\n(\xp,\yp),\zp)+N_+(\xp,\yp,\zp)+\eta(\n_\zp\xp,\yp),
\end{align*}
and because $T_+$ is integrable, we find $\eta(T^\n(\xp,\yp),\zp)+\eta(\n_\zp\xp,\yp)=0$.
\end{proof}
 
Theorem  \ref{prop:metric-courant-TP} in particular tells us that the brackets $\bracd_\pm$ can be obtained using for \textit{any} adapted connection $\n$. in particular shows that when $T_+$ is integrable, the bracket $\bracd_+^\n$ is related to the standard Dorfman bracket $\bracd_{\FF_+}$ on $(\TT)\FF_+$ by
\begin{align*}
\bl X,Y\br^\n_+=\rho_+^{-1}\bbl \rho_+ X,\rho_+ Y\bbr_{\FF_+},
\end{align*}
which means that $(T\PS,\PPpm,\eta,\bracd_\pm^\n)$ is the Courant algebroid of Proposition \ref{prop:courant_alg_L}. Moreover, the bracket $\brac_\pm$ can be obtained via the formula \eqref{dorfman_projections} using \textit{any} adapted connection $\n$.

The D-bracket is now defined as the (unprojected) bracket associated to an adapted connection:
\begin{Def} \label{def:D-bracket}
Let $(\PS,\eta,K)$ be an almost para-Hermitian manifold and $\n$ an adapted connection. We define the \textbf{D-bracket} by
\begin{align}\label{D-brac-con}
\eta(\bl X,Y\br^D,Z)\coloneqq \eta(\n_{X}Y-\n_{Y}X,Z)+\eta(\n_{Z}X,Y).
\end{align}
\end{Def}

The following corollary justifies our definition:
\begin{corollary}
On a para-Hermitian manifold, the D-bracket can be expressed as
\begin{align}\label{eq:D-brac-rho}
\bl X,Y \br^D\coloneqq \rho^{-1}_+\bbl \rho_+ X,\rho_+ Y \bbr_{\FF_+}+\rho^{-1}_-\bbl \rho_- X,\rho_- Y \bbr_{\FF_-},
\end{align}
where $\FF_\pm$ are the integral foliations of the eigenbundles of $K$ and $\bracd_{\FF_\pm}$ are the corresponding Dorfman brackets. If the manifold is also flat, then $\bracd^D$ takes the form \eqref{eq:D-bracket}:
\begin{align*}
\bl X,Y \br^D=\left(X^I\p_IY^J-Y^I\p_IX^J+\eta_{IL}\eta^{KJ}Y^I\p_KX^L\right)\p_J.
\end{align*}
\end{corollary}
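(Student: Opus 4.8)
The plan is to prove the two claims separately, since they are logically independent: the first is a restatement of the definition using the Courant-algebroid structure, and the second is the flat-coordinate computation.

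For the first claim, equation \eqref{eq:D-brac-rho}, I would invoke Theorem \ref{prop:metric-courant-TP} together with the decomposition \eqref{eq:bracketsum}. On a para-Hermitian manifold both $T_+$ and $T_-$ are integrable, so the hypotheses of Theorem \ref{prop:metric-courant-TP} hold for both signs, and an adapted connection $\n$ is by definition both p- and n-adapted. Hence $\bracd^\n_+=\bracd_+$ and $\bracd^\n_-=\bracd_-$. Combining this with \eqref{eq:bracketsum}, $\bracd^D=\bracd^\n=\bracd^\n_++\bracd^\n_-=\bracd_++\bracd_-$. Then I would simply cite Proposition \ref{prop:courant_alg_L}, which identifies $\bracd_\pm$ with $\rho_\pm^{-1}\bbrac_{\FF_\pm}\circ(\rho_\pm\times\rho_\pm)$, to obtain \eqref{eq:D-brac-rho}. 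This part is essentially bookkeeping: the content was already proven in Theorem \ref{prop:metric-courant-TP} and Proposition \ref{prop:courant_alg_L}.

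For the second claim, I would work in the flat adapted coordinates $\{x^i,\tl x_j\}$ in which, as asserted in the Results section, $\eta(\p_i,\pt^j)=\delta_i^j$, $\eta(\p_i,\p_j)=\eta(\pt^i,\pt^j)=0$, and $K$ acts as $+1$ on the $\p_i$ and $-1$ on the $\pt^j$. Flatness means the Levi-Civita connection $\lc$ of $\eta$ has vanishing Christoffel symbols in these coordinates, and by Lemma \ref{lem:herm-kahl} (since flat para-Hermitian forces para-Kähler, hence $\lc K=0$) the Levi-Civita connection is itself adapted — so I may take $\n=\lc$, which simply annihilates all coordinate frame fields. I would then expand the right side of Definition \ref{def:D-bracket} with $\n=\lc$: writing $X=X^I\p_I$, $Y=Y^J\p_J$, $Z=Z^K\p_K$, the Leibniz rule gives $\n_X Y = X^I\p_I(Y^J)\,\p_J$ since $\n\p_J=0$, and similarly for the other terms. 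Substituting into $\eta(\bracd^D,Z)=\eta(\n_XY-\n_YX,Z)+\eta(\n_ZX,Y)$ and using $\eta_{IJ}$ to raise the free index (i.e. $\eta(\p_I,\p_J)=\eta_{IJ}$, with inverse $\eta^{IJ}$) yields, after relabelling dummy indices, exactly $\bracd^D = \bigl(X^I\p_IY^J-Y^I\p_IX^J+\eta_{IL}\eta^{KJ}Y^I\p_KX^L\bigr)\p_J$, which is \eqref{eq:D-bracket}.

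The main obstacle — such as it is — is the bookkeeping in the last step: one must be careful that the term $\eta(\n_Z X,Y)$ produces the flux-like term $\eta_{IL}\eta^{KJ}Y^I\p_KX^L$ with the correct index placement, since this is where $Z$ is eliminated by the non-degeneracy of $\eta$ (the identity $\eta(W,Z)=\eta(W^J\p_J,Z^K\p_K)$ forces $W^J = \eta^{JK}\cdot$(coefficient of $Z^K$) after stripping $Z$). Concretely, $\eta(\n_Z X,Y)=Z^K\p_K(X^L)\,\eta_{LM}Y^M$, so this contributes to the $\p_J$-component of $\bracd^D$ the quantity $\eta^{JK}\eta_{LM}Y^M\p_K X^L$, matching the stated term after renaming. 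I would also remark that the identification $\pt^i\mapsto dx^i$, $\p_i\mapsto d\tl x_i$ under $\rho_\pm$ follows immediately from Definition \ref{eta_iso} and $\eta(\pt^i)=\eta(\pt^i,\cdot)=dx^i$ in these coordinates, so that the two displayed formulas in the corollary are consistent. Everything here is a direct computation with no genuine difficulty once the flat adapted coordinates and the choice $\n=\lc$ are in place.
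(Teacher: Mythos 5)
Your proof is correct. The first half is exactly the paper's argument: Theorem \ref{prop:metric-courant-TP} applied to both signs plus the decomposition \eqref{eq:bracketsum} (the extra citation of Proposition \ref{prop:courant_alg_L} just makes explicit what $\bracd_\pm$ means). For the second half you take a genuinely different, and arguably cleaner, route: the paper derives the flat formula by passing through the first identity and substituting the coordinate expressions for the Lie bracket, Lie derivative and exterior derivative into the Dorfman brackets $\bracd_{\FF_\pm}$, whereas you compute directly from Definition \ref{def:D-bracket} with $\n=\lc$ in flat adapted coordinates, where all Christoffel symbols vanish. Your index bookkeeping for the term $\eta(\n_ZX,Y)$ is right. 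The one step to tighten is the justification that $\lc$ may be used as the adapted connection: as phrased, invoking Lemma \ref{lem:herm-kahl} via ``flat forces para-K\"ahler'' is slightly circular. The clean argument is that once one has adapted coordinates in which $\eta$ is constant and $T_\pm$ are coordinate-spanned, $\lc$ annihilates the coordinate frame, hence $\lc\eta=\lc K=0$ and $T^{\lc}=0$, so $\lc$ is adapted by the Example following Definition \ref{def:adapted_conn}. Both you and the paper rely without proof on the existence of such coordinates (flatness alone gives coordinates with constant $\eta$; that they can simultaneously be chosen adapted to the two foliations is the unproven part), so you are at the same level of rigor as the original.
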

\begin{proof}
The first statement follows from Theorem \ref{prop:metric-courant-TP} and \eqref{eq:bracketsum}. The second statement follows from the fact that a flat pseudo-Riemannian manifold $(M^{2n},\eta)$ is locally isomorphic to $\RR^{2n}$ and we can choose local coordinates so that $\eta$ is constant. Using the relationship to the Dorfman brackets $\bracd_{\FF_\pm}$ along with the coordinate expressions for the Lie bracket, Lie derivative and exterior derivative then yields the result.
\end{proof}

Using slightly different language, the definition of a D-bracket on a para-Hermitian manifold by \eqref{eq:D-brac-rho} has been presented in \cite{vaisman2013towards}.

We close the discussion with the following remarks
\begin{Rem}
Analogously to the Courant bracket, which is a skew version of the Dorfman bracket, we can define a skew version of the D-bracket, the C-bracket:
\begin{align}\label{C-bracket}
\bl X,Y\br^C\coloneqq \frac{1}{2}(\bl X,Y\br^D-\bl Y,X\br^D).
\end{align}
\end{Rem}

\begin{Rem}
On a para-Hermitian manifold, the C-bracket \eqref{C-bracket} can be written as a bracket operation on $T_+\oplus T_-\simeq(\TT)\FF_+$:
\begin{align*}
\bbl x+\ap,y+\bt\bbr^C&=[x,y]+\Lie_x\bt-\Lie_y\ap+\frac{1}{2}\rd_+(\langle \ap,y\ra-\la\bt, x\ra)\\
&+[\ap,\bt]^*+\Lie^*_\ap y-\Lie^*_\bt x +\frac{1}{2}\rd_- (\langle x,\bt \ra-\la y,\ap\ra),
\end{align*}
where $[\ap,\bt]^*\coloneqq \eta[\eta^{-1}\ap,\eta^{-1}\bt]$ and $\Lie^*_\ap y\coloneqq \eta^{-1}\Lie_{\eta^{-1}\ap}\eta(y)$. This is formally reminiscent of the bracket defined for a Lie bialgebroid in \cite{Liu:1995lsa} and because both the Lie bialgebroid bracket and the C-bracket are defined on a doubled space, the two brackets are sometimes confused. We would like to emphasize here that the brackets are, however, very different; in particular the Lie algebroids $T_+$ and $T_-\simeq T_+^*$ do not form a Lie bialgebroid and the C-bracket is not a Courant algebroid bracket on $T_+\oplus T_+^*$, because it fails to satisfy the analog of the Jacobi identity for the skew bracket. This has also been noted in \cite{vaisman2012geometry}, where a similar construction of the C-bracket using a metric connection has been presented for affine para-K\"ahler manifolds. There, the structure $(T\PS,\eta,\id,\bracd^C)$ has been given the name \textbf{metric algebroid}, reflecting the fact that the C-bracket is not a Courant algebroid bracket.
\end{Rem}

\subsection{The Canonical Connection}
We now define the canonical connection, which provides a concrete example of an adapted connection. Since any adapted connection gives the same bracket, an explicit example of such connection is especially useful in calculations involving the D-bracket. The importance of the canonical connection for the presented construction has been described in \cite{freidel2017generalised}, where it is used for defining the $\PPpm$-projected brackets.

\begin{Def}\label{def:canonical_conn}
Let $(\PS,\eta,K)$ be an almost para-Hermitian manifold and $\lc$ the Levi-Civita connection of $\eta$. We define the \textbf{canonical connection} $\n^c$ by
\begin{align}\label{eq:canonical_conn_contorsion}
\eta(\n^c_XY,Z)=\eta(\lc_XY,Z)-\frac{1}{2}\lc_X\omega(Y,KZ).
\end{align}
\end{Def}
It can be easily checked that the canonical connection is a para-Hermitian connection (i.e. $\n^c \eta=\n^c \omega=0$). This implies that $\n^c$ preserves the eigenbundles of $K$:
\begin{align*}
\n^c_X y_\pm \in \se(T_\pm),
\end{align*}
for $y_+\in \se(T_\pm)$. This property becomes explicit when $\n^c$ is rewritten in the form
\begin{align}\label{eq:canonica_conn_PP}
\n^c_XY=\PP\lc_X \PP Y+\PPt\lc_X\PPt Y.
\end{align}

We can now use $\n^c$ to write the D-bracket explicitly in terms of $\lc$ and $\rd \omega$:
\begin{proposition}\label{prop:Dbrac-contorsion}
The D-bracket of an almost para-Hermitian manifold is given by
\begin{align*}
\eta(\bbl X,Y\bbr^D,Z)=\eta(\bbl X,Y\bbr^{\lc},Z)-\frac{1}{2}[\ &\rd\omega^{(+3,-0)}(X,Y,Z)+\rd\omega^{(+2,-1)}(X,Y,Z)\\
-&\rd\omega^{(+0,-3)}(X,Y,Z)-\rd\omega^{(+1,-2)}(X,Y,Z)\ ],
\end{align*}
where $\bracd^{\lc}$ is the bracket associated to $\lc$.
\end{proposition}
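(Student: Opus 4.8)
The plan is to compute the D-bracket through the canonical connection $\n^c$ of Definition \ref{def:canonical_conn}. Since $\n^c$ is an adapted connection, Definition \ref{def:D-bracket} gives $\eta(\bl X,Y\br^D,Z)=\eta(\n^c_X Y-\n^c_Y X,Z)+\eta(\n^c_Z X,Y)$. Substituting the contorsion formula \eqref{eq:canonical_conn_contorsion}, namely $\eta(\n^c_A B,C)=\eta(\lc_A B,C)-\tfrac12\lc_A\omega(B,KC)$, into the three terms, the Levi-Civita pieces assemble (by Definition \ref{def:associated_bracket}) into $\eta(\bl X,Y\br^{\lc},Z)$, and one is left with the contorsion contribution
\begin{align*}
-\tfrac12\big[\lc_X\omega(Y,KZ)-\lc_Y\omega(X,KZ)+\lc_Z\omega(X,KY)\big].
\end{align*}
So the entire content of the proof is to identify this expression with $-\tfrac12$ times the indicated combination of $\rd\omega$-components evaluated on $(X,Y,Z)$.

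To do that, I would first use that $\lc_A\omega(B,C)=\Phi(A,B,C)$ by the definition of $\Phi$, together with Lemma \ref{lem:Phi}, which says that $\lc_A\omega$ has vanishing $(+1,-1)$ part in its last two slots; hence $\lc_A\omega(B,KC)=\Phi(A,b_+,c_+)-\Phi(A,b_-,c_-)$. Splitting the contorsion contribution accordingly rewrites it as $-\tfrac12(S_+-S_-)$ with $S_\pm=\Phi(X,y_\pm,z_\pm)-\Phi(Y,x_\pm,z_\pm)+\Phi(Z,x_\pm,y_\pm)$. Now expand the \emph{first} argument of every $\Phi$ into its $T_+$ and $T_-$ parts. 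In $S_+$ the fully-$(+)$ terms, $\Phi(x_+,y_+,z_+)-\Phi(y_+,x_+,z_+)+\Phi(z_+,x_+,y_+)$, are exactly $\rd\omega(x_+,y_+,z_+)=\rd\omega^{(+3,-0)}(X,Y,Z)$ by the Cartan-type identity $\rd\omega(A,B,C)=\Phi(A,B,C)-\Phi(B,A,C)+\Phi(C,A,B)$ valid for the torsionless $\lc$; the leftover terms $\Phi(x_-,y_+,z_+)-\Phi(y_-,x_+,z_+)+\Phi(z_-,x_+,y_+)$ match $\rd\omega^{(+2,-1)}(X,Y,Z)=\rd\omega(x_-,y_+,z_+)+\rd\omega(x_+,y_-,z_+)+\rd\omega(x_+,y_+,z_-)$ once Lemma \ref{lem:Phi} is used to discard the mixed $\Phi$-terms inside each of the three summands. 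Treating $S_-$ by the symmetric argument gives $S_-=\rd\omega^{(+0,-3)}(X,Y,Z)+\rd\omega^{(+1,-2)}(X,Y,Z)$, and $-\tfrac12(S_+-S_-)$ is then precisely the asserted formula.

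The main obstacle is the type-decomposition bookkeeping in the second step: one has to expand $\rd\omega^{(+2,-1)}$ and $\rd\omega^{(+1,-2)}$ as sums over the three positions of the distinguished $T_-$ (respectively $T_+$) argument, and then run each of the resulting nine $\Phi$-terms through Lemma \ref{lem:Phi} to determine which survive. Once it is recorded that $\lc_A\omega$ is of pure type $(+2,-0)\oplus(+0,-2)$, everything collapses onto the Cartan formula for $\rd\omega$ with respect to the torsionless $\lc$, and no further input is needed.
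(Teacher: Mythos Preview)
Your proposal is correct and follows exactly the approach the paper indicates: the paper's proof is a single sentence pointing to the definition of the D-bracket together with the contorsion formula \eqref{eq:canonical_conn_contorsion} for $\n^c$, and you have carried out precisely that computation in detail. Your use of Lemma \ref{lem:Phi} to see that $\lc_A\omega$ has only $(+2,-0)$ and $(+0,-2)$ parts, together with the Cartan identity $\rd\omega(A,B,C)=\Phi(A,B,C)-\Phi(B,A,C)+\Phi(C,A,B)$ for the torsionless $\lc$, is the right bookkeeping to match the contorsion terms with the stated components of $\rd\omega$.
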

\begin{proof}
This formula is easily derived from the definition of the D-bracket and the expression \eqref{eq:canonical_conn_contorsion} for the canonical connection.
\end{proof}

On a para-K\"ahler manifold, the canonical connection coincides with the Levi-Civita connection of $\eta$:
\begin{lemma}
Let $(\PS,\eta,K)$ be a para-K\"ahler manifold. Then the canonical connection coincides with the Levi-Civita connection of $\eta$, $\n^c=\lc$, hence $\bracd^D=\bracd^{\lc}$.
\end{lemma}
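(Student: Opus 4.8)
The plan is to reduce the claim to the Kähler-type characterization in Lemma~\ref{lem:herm-kahl}. First I would recall that on a para-Kähler manifold $\lc K=0$, equivalently $\lc\omega=0$, so that $\lc_X\omega(Y,Z)=0$ for all vector fields $X,Y,Z$; in particular the correction term $-\frac{1}{2}\lc_X\omega(Y,KZ)$ in the defining formula \eqref{eq:canonical_conn_contorsion} of the canonical connection vanishes identically. Hence $\eta(\n^c_XY,Z)=\eta(\lc_XY,Z)$ for every vector field $Z$, and nondegeneracy of $\eta$ forces $\n^c_XY=\lc_XY$, i.e.\ $\n^c=\lc$.

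For the statement about brackets: by Definition~\ref{def:D-bracket} the D-bracket is the bracket associated to an adapted connection, and the canonical connection is adapted; since $\n^c=\lc$, the bracket associated to $\n^c$ is exactly the bracket $\bracd^{\lc}$ associated to $\lc$, so $\bracd^D=\bracd^{\lc}$. Equivalently, one can read this directly off Proposition~\ref{prop:Dbrac-contorsion}: the four components of $\rd\omega$ appearing there are type-components of the antisymmetrization of $\lc\omega$, and hence all vanish when $\lc\omega=0$.

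There is essentially no obstacle here; the only point worth making explicit is that expressing $\bracd^D$ as the bracket of $\lc$ presupposes that $\lc$ (equivalently $\n^c$) is an adapted connection, which is precisely the fact — already recorded in this section — that the canonical connection is adapted.
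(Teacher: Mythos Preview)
Your argument is correct and matches the paper's own proof, which simply cites \eqref{eq:canonical_conn_contorsion}, Lemma~\ref{lem:herm-kahl}, and Proposition~\ref{prop:Dbrac-contorsion}; you have merely spelled out the one-line deduction in more detail. Your closing caveat is also well placed: the adaptedness of $\n^c$ is exactly what the paper records at the start of this subsection.
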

\begin{proof}
This is a direct consequence of \eqref{eq:canonical_conn_contorsion}, Lemma \ref{lem:herm-kahl} and Proposition \ref{prop:Dbrac-contorsion}.
\end{proof}

\begin{Rem}
The canonical connection appears in \cite{Ivanov}, where the authors introduce a class of para-Hermitian connections $\n^t$ parametrized by $t\in \RR$. This class also includes the Chern connection and the Bismut connection of a para-Hermitian manifold. The canonical connection is given by $\n^{t=0}$ and all connection in this class degenerate to the canonical connection on a nearly para-K\"ahler manifold.
\end{Rem}

\subsection{A Comment About The Section Condition}
In the physics literature, it is observed that even though the D-bracket is not a Courant algebroid bracket, this can be locally fixed by imposing the \textbf{section condition}, i.e. restricting the dependence of the vector field to half-dimensional submanifolds called polarisations.
The most commonly discussed polarisation is the one given by $\tl{x}_i=const.$ for all $i=1,\cdots,n$, or equivalently by setting
\begin{align*}
\pt^i=0,\quad\forall\ i=1,\cdots,n.
\end{align*}
In our framework of para-Hermitian manifolds, instead of understanding the section condition locally as an independence on certain subset of local coordinates, equations \eqref{Dbrac_unprojected} and \eqref{dorfman_projections} provide an approach to impose the section condition globally.


The following result shows that on flat manifolds our approach coincides with globally imposing the section condition in the usual manner by restricting the dependence of the vector fields to half of the adapted coordinates, i.e. by only considering vector fields parallel along one of the foliations:
\begin{proposition}
Let $(\PS,\eta,K)$ be a flat para-Hermitian manifold and let $X_+$ and $Y_+$ be vector fields that are parallel along $\FF_-$. Then
\begin{align*}
\bl X_+,Y_+\br^D=\bl X_+,Y_+\br_+,
\end{align*}
or equivalently
\begin{align*}
\bl X_+,Y_+\br_-=0.
\end{align*}
\end{proposition}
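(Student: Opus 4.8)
The plan is to use the $\PPpm$-projected bracket formula \eqref{dorfman_projections} together with the canonical connection $\n^c$, which is adapted (hence computes the D-bracket) and satisfies the explicit form \eqref{eq:canonica_conn_PP}. Since the D-bracket splits as $\bracd^D=\bracd_+^{\n}+\bracd_-^{\n}$ by \eqref{eq:bracketsum}, and by Theorem \ref{prop:metric-courant-TP} the piece $\bracd_+^{\n}$ already equals the Courant bracket $\bracd_+$, the two displayed claims are equivalent, and it suffices to show $\bl X_+,Y_+\br_-^{\n}=0$ when $X_+,Y_+\in\se(T_+)$ are parallel along $\FF_-$, i.e.\ $\n_{z_-}X_+=\n_{z_-}Y_+=0$ for all $z_-\in\se(T_-)$.

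First I would write out $\eta(\bl X_+,Y_+\br_-^{\n},Z)$ from \eqref{dorfman_projections}: it is $\eta(\n_{\PPt X_+}Y_+-\n_{\PPt Y_+}X_+,Z)+\eta(\n_{\PPt Z}X_+,Y_+)=\eta(\n_{0}Y_+-\n_{0}X_+,Z)+\eta(\n_{z_-}X_+,Y_+)$, since $\PPt X_+=\PPt Y_+=0$ as $X_+,Y_+\in T_+$ and $\PPt Z=z_-$. The first two terms vanish identically, and the last term $\eta(\n_{z_-}X_+,Y_+)$ vanishes by the hypothesis that $X_+$ is parallel along $\FF_-$. Hence $\bl X_+,Y_+\br_-^{\n}=0$, and therefore $\bl X_+,Y_+\br^D=\bl X_+,Y_+\br_+^{\n}=\bl X_+,Y_+\br_+$ by Theorem \ref{prop:metric-courant-TP}.

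The one point requiring care is the meaning of ``parallel along $\FF_-$'': on a flat para-Hermitian manifold one has adapted coordinates $\{x^i,\tl x_j\}$ in which $T_+$ is spanned by $\{\p_i\}$, $T_-$ by $\{\pt^j\}$, and (by the flatness lemma) $\eta$ is constant, so the canonical connection is the flat coordinate connection. A vector field $X_+=X_+^i\p_i$ is parallel along $\FF_-$ precisely when $\pt^j X_+^i=0$, i.e.\ its components depend only on the $x$-coordinates — this is exactly the section condition $\pt^i=0$ of the physics literature, recovered here as a genuine covariant-parallelism statement rather than a coordinate restriction. I would remark that flatness (hence the existence of such adapted coordinates with constant $\eta$, and the identification of $\n^c$ with a flat connection) is what makes ``parallel along $\FF_-$'' a polarisation-type condition; in the argument itself only the covariant conditions $\n_{z_-}X_+=\n_{z_-}Y_+=0$ are used, so no hard computation is involved — the statement is essentially immediate once the correct adapted connection is plugged in.

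I do not expect any real obstacle: the main subtlety is purely expository, namely connecting the intrinsic phrasing ``parallel along $\FF_-$'' with the coordinate section condition, and making sure one invokes an adapted connection (the canonical one) so that both $\bracd^D=\bracd_+^{\n}+\bracd_-^{\n}$ and $\bracd_+^{\n}=\bracd_+$ may be used. The computation collapses to two lines.
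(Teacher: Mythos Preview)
There is a misreading of the hypothesis that invalidates your argument as written. In the paper's conventions, lowercase $x_+=\PP X$ denotes the $T_+$-component of a vector $X$, but the capital letters $X_+,Y_+$ in this proposition are \emph{not} sections of $T_+$: they are general vector fields on $\PS$ whose subscript merely records the parallelism condition along $\FF_-$. In adapted coordinates the paper writes $X_+=X^i(x)\p_i+\tl X_j(x)\pt^j$, so $\PPt X_+=\tl X_j(x)\pt^j$ is generically nonzero. Your step ``$\PPt X_+=\PPt Y_+=0$ as $X_+,Y_+\in T_+$'' is therefore unjustified, and the first two terms of the $\PPt$-projected bracket do not disappear for the reason you give. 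With your reading the statement would be nearly trivial and would not require flatness at all, which should already signal that something is off.

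The repair is immediate once the hypothesis is read correctly. Taking ``parallel along $\FF_-$'' to mean $\n^c_{z_-}X_+=\n^c_{z_-}Y_+=0$ for all $z_-\in\se(T_-)$ (which, on a flat manifold where $\n^c=\lc$ has vanishing Christoffel symbols in adapted coordinates, is exactly the section condition $\pt^j X^I=0$), \emph{all three} terms in
\[
\eta(\bl X_+,Y_+\br_-^{\n^c},Z)=\eta(\n^c_{\PPt X_+}Y_+-\n^c_{\PPt Y_+}X_+,Z)+\eta(\n^c_{\PPt Z}X_+,Y_+)
\]
vanish by that hypothesis, since each involves $\n^c$ along a $T_-$-direction acting on $X_+$ or $Y_+$. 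This is precisely what the paper does, only in coordinates: it shows $\n^c_{\PPt X_+}Y_+=0$ by first killing the derivatives of the coefficient functions via $\pt^i[Y^j(x)]=0$ and then killing the remaining $\n^c_{\PPt X_+}\p_i$ and $\n^c_{\PPt X_+}\pt^j$ using flatness of $\lc$.
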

\begin{proof}
Let $(x_i,\tl{x}^j)$ be the adapted local coordinates, where $x^i$ are coordinates along $\FF_+$. Then $X_+$ is locally expressed as
\begin{align*}
X_+=X^i(x)\p_i+\tl{X}_j(x)\pt^j,
\end{align*}
and similarly for $Y_+$. We will now use the canonical connection in the definition of the D-bracket. Because $\n^c_{\PPt X_+} Y^j(x)=\PPt(X_+)[Y^j(x)]=\tl{X}_i\pt^i[Y^j(x)]=0$ (similarly for $\tl{Y}_j(x)$), we have
\begin{align*}
\n^c_{\PPt X}Y_+=Y^i(x)(\n^c_{\PPt Y}\p_i)+\tl{Y}_j(\n^c_{\PPt Y}\pt^j).
\end{align*}
Using the definition of $\n^c$, we find
\begin{align*}
\n^c_{\PPt X}\p_i=\lc_{\PPt X}\p_i=0,
\end{align*}
because $\lc$ is flat and the Christoffel symbols of $\lc$ vanish. Similarly, $\n^c_{\PPt X}\pt^j=0$, which shows that $\n^c_{\PPt X}Y_+=0$. Recalling \eqref{dorfman_projections} yields $\bracd_-=0$.
\end{proof}
We comment that this global approach could be a step towards relaxing the section condition partially or altogether, which is deemed desirable in various discussions about the foundations of the DFT (see for example \cite{geissbuhler2011double,aldazabal2011effective,berman2014supersymmetry}, for a different approach \cite{lee2016towards}).

\section{Fluxes and Deformations of Para-K\"ahler Manifolds}\label{sec:fluxes}
In this section we show how to acquire the twisted D-bracket using the results of the previous sections. More concretely, we define a certain class of deformations of the para-Hermitian structure $K$ and show that in the case when $K$ is para-K\"ahler, the D-bracket corresponding to the deformed structure is the D-bracket twisted by fluxes described in the DFT literature. The fluxes appear here as obstructions to integrability with respect to the D-bracket of $K$ (see Definition \ref{def:weak-int}).

This shows that the language of fluxes used in DFT (and in general in string theory) can be included in the framework of para-Hermitian manifolds and that the twisted bracket arises as a consequence of a deformation of the underlying geometry. For works discussing the fluxes and twisted brackets from a different point of view, see \cite{Shelton:2005cf,andriot2012non,blumenhagen2012bianchi,halmagyi2009non,Aldazabal:2013sca,geissbuhler2013exploring} and references therein. A mathematical overview of related concepts is given in \cite{kosmann:quasi}.

\subsection{B-transformation of a Para-Hermitian Structure}
We first define the notion of a B-transformation for any almost para-Hermitian manifold:
\begin{Def}
Let $(\PS,\eta,K)$ be an almost para-Hermitian manifold. \\ A \textbf{B-transformation} is an endomorphism of $T\PS$, given in the splitting $T_+\oplus T_-$ by
\begin{align*}
e^{B_+}\coloneqq
\begin{pmatrix}
\id & 0 \\
B_+ & \id
\end{pmatrix} \in \End(T\PS)
\end{align*}
where $B_+:T_+\rightarrow T_-$ is a skew map such that $\eta(B_+X,Y)=-\eta(X,B_+Y)$. The induced map on the endomorphisms of $T\PS$, given by conjugation, is also called a \textbf{${B_+}$-transformation} and in particular the ${B_+}$-transformation of $K$ is given by
\begin{align*}
K\overset{e^{B_+}}{\longmapsto} K_{B_+}=e^{B_+} K e^{-{B_+}}.
\end{align*}
We also say that $K_{{B_+}}$ is the \textbf{${B_+}$-transformation} of $K$. 
\end{Def}
It is easy to see that ${B_+}$ can be given by either a two-form $b_+$ or a bivector $b_-$:
\begin{align}\label{eq:b-beta}
\eta(BX,Y)=b_+(X,Y)=b_-(\eta (X),\eta (Y)),
\end{align}
where $b_+$ is of type $(+2,-0)$ and $b_-$ is of type $(+0,-2)$, so we can write $b_+(X,Y)=b_+(\xp,\yp)$.

Similarly, we can define a map $B_-:T_-\rightarrow T_+$ given by a type $(+0,-2)$ two-form $\bt_-$ or a $(+2,-0)$ bivector $\bt_+$. The $B_-$-transformation then takes the form
\begin{align}\label{rem:Bminus}
e^{B_-}\coloneqq
\begin{pmatrix}
\id & B_- \\
0 & \id
\end{pmatrix} \in \End(T\PS).
\end{align}
Without loss of generality, we will continue the discussion for $B_+$. The case when both such transformations are performed simultaneously is left for future work.

\begin{Rem}
\sloppy{
As we will see later, the B-transformation by $B_+$ corresponds to a $b$-field transformation of $(\TT)\FF_+$ and a $\beta$-field transformation of $(\TT)\FF_-$, while the transformation by $B_-$ corresponds to a $\beta$-field transformation of $(\TT)\FF_+$ and a $b$-field transformation on $(\TT)\FF_-$.}

In physics, the manifold $\FF_+$ represents the usual space-time, while $\FF_-$ represents a \textit{dual space-time}. We therefore denote the corresponding two-forms and bivectors from the point of view of the space-time $\FF_+$: $b_\pm$ for $B_+$ and $\bt_\pm$ for $B_-$.
\end{Rem}

\begin{notation*}
From now on we will only discuss the B-transformation given by $B_+$, denoted simply as $B_+=B$. Moreover, since both $b_\pm$ give the same data of the map $B$ and are in the adapted coordinate frame given by the same coefficient functions,
\begin{align*}
b_+=b_{ij}dx^i\w dx^j,\quad b_-=b_{ij}\pt^i\w \pt^j
\end{align*}
we will denote $b_\pm\coloneqq b$ where no confusion is possible.
\end{notation*}

In the splitting $T_+\oplus T_-$, $K_B$ is given by
\begin{align*}
K_{B}=
\begin{pmatrix}
\id & 0 \\
2B & -\id
\end{pmatrix},
\end{align*} 
where we observe that $K_{B}^2=\id$. Moreover, because of the skewness property of $B$, $K_{B}$ also satisfies $\eta(K_{B}\cdot,K_{B}\cdot)=-\eta$ and therefore the B-transformation of $K$ is a new almost para-Hermitian structure  on $\PS$. The action of $e^B$ on $K$ can also be seen as a change of the fundamental form:
\begin{align}\label{omegaB}
\omega\overset{e^B}{\longmapsto} \omega_B=\eta K_B=\omega+2b.
\end{align}

The bigrading of tensors \eqref{eq_plusminus_decomp} with respect to $K_B$ will be denoted by $(+p,-k)_B$. The corresponding projections act on vectors by $\PPBpm\coloneqq \frac{1}{2}(\id \pm K_B)$:
\begin{align*}
\PPB(X)=x_++B(x_+),\quad \PPtB(X)=\xt-B(x_+).
\end{align*}
Because $B$ maps $T_+$ to $T_-$, $\im(\PPtB)=T_-$, but $\im(\PPB)=T_+^B\neq T_+$, i.e $K$ and $K_B$ share the $-1$ eigenbundle, but the $+1$ eigenbundles are different. This means that even if $K$ is integrable, $K_B$ need not be. We therefore view $K_B$ as a deformation of $K$. It turns out that the D-bracket gives rise to a Maurer-Cartan type equation relevant to this deformation problem.

\subsection{Weak Integrability and the Maurer-Cartan Equation}
We now introduce the notion of weak integrability for an isotropic distribution on an almost para-Hermitian manifold, where in contrast to the usual notion of Frobenius integrability, the Lie bracket is replaced by the D-bracket.

\begin{Def}\label{def:weak-int}
Let $(\PS,\eta,K)$ be an almost para-Hermitian manifold and $\bracd^D$ the associated D-bracket. We say an $\eta$-isotropic distribution $\mathcal{D}$ is \textbf{weakly integrable} (with respect to $K$) if it is involutive under the D-bracket of $K$, i.e.
\begin{align*}
\bl\mathcal{D},\mathcal{D}\br^D\subset \mathcal{D}.
\end{align*}
\end{Def}
The eigenbundles $T_\pm$ of $K$ are always weakly integrable with respect to $K$:
\begin{corollary}
Let $(\PS,\eta,K)$ be an almost para-Hermitian manifold. Then the eigenbundles $T_\pm$ of $K$ are weakly integrable with respect to $K$ itself.
\end{corollary}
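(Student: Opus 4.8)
The plan is to compute the D-bracket with the connection best suited to the decomposition $T\PS=T_+\oplus T_-$, namely the canonical connection $\n^c$. By the discussion following Definition~\ref{def:canonical_conn} every adapted connection yields the same D-bracket, and $\n^c$ is one such connection, so I may substitute $\n=\n^c$ into Definition~\ref{def:D-bracket}. The key structural input I will use is that $\n^c$ is a para-Hermitian connection and therefore preserves the eigenbundles of $K$: one has $\n^c_X y_\pm\in\se(T_\pm)$ for every vector field $X$.

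Concretely, I would take $x_+,y_+,z_+\in\se(T_+)$ and expand
\[
\eta(\bl x_+,y_+\br^D,z_+)=\eta(\n^c_{x_+}y_+-\n^c_{y_+}x_+,z_+)+\eta(\n^c_{z_+}x_+,y_+).
\]
Each of $\n^c_{x_+}y_+$, $\n^c_{y_+}x_+$ and $\n^c_{z_+}x_+$ is again a section of $T_+$, and $T_+$ is $\eta$-isotropic (in fact Lagrangian); hence every one of the three pairings on the right-hand side vanishes, so that $\eta(\bl x_+,y_+\br^D,z_+)=0$ for all $z_+\in\se(T_+)$.

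To finish, I would invoke non-degeneracy of $\eta$ together with the fact that $T_+$ is maximally isotropic, so that $T_+^{\perp_\eta}=T_+$ (equivalently, $\eta$ restricts to an isomorphism $T_-\simeq T_+^*$). Vanishing of $\eta(\bl x_+,y_+\br^D,\,\cdot\,)$ on all of $T_+$ then forces $\bl x_+,y_+\br^D\in\se(T_+)$, i.e.\ $\bl T_+,T_+\br^D\subset T_+$. The statement for $T_-$ is obtained by the verbatim same argument with the roles of $T_+$ and $T_-$ interchanged, using that $\n^c$ also preserves $T_-$.

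There is essentially no deep obstacle; the only point that needs attention is the choice of connection. Definition~\ref{def:adapted_conn} only demands $\n_{x_+}y_-\in\se(T_-)$ and says nothing a priori about $\n_{x_+}y_+$, so for a generic adapted connection the three terms above do not visibly collapse — it is the extra rigidity of the para-Hermitian connection $\n^c$ (which preserves \emph{both} eigenbundles) that makes the computation immediate. As a cross-check one could instead feed arguments from $T_+$ into Proposition~\ref{prop:Dbrac-contorsion}: the mixed-type components $\rd\omega^{(+2,-1)}$, $\rd\omega^{(+0,-3)}$, $\rd\omega^{(+1,-2)}$ vanish identically on vectors lying in $T_+$, and the surviving pieces $\eta(\bl x_+,y_+\br^{\lc},z_+)$ and $-\tfrac12\rd\omega^{(+3,-0)}(x_+,y_+,z_+)$ cancel once both are rewritten through the tensor $\Phi$ and Lemma~\ref{lem:30integr} is applied; but the $\n^c$ computation above is shorter and more transparent.
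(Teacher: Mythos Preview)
Your proof is correct and follows essentially the same route as the paper: use the canonical connection $\n^c$ in the defining formula for the D-bracket, invoke that $\n^c$ preserves the eigenbundles $T_\pm$, and conclude by $\eta$-isotropy of $T_\pm$. Your version is simply a more detailed spelling-out of the paper's one-sentence argument (including the explicit passage from $\eta(\bl x_+,y_+\br^D,z_+)=0$ to $\bl x_+,y_+\br^D\in T_+$ via maximal isotropy), together with an optional cross-check through Proposition~\ref{prop:Dbrac-contorsion} that the paper does not mention.
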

\begin{proof}
This is an consequence of the definition of D-bracket \eqref{D-brac-con}, along with the fact that the canonical connection $\n^c$, preserves the eigenbundles $T_\pm$, which are isotropic with respect to $\eta$.
\end{proof}

However, the eigenbundles of a B-transformed para-Hermitian structure are in general not weakly integrable with respect to $K$. Because the weak integrability is a relative notion depending on a choice of a \textit{reference} almost para-Hermitian structure which defines the D-bracket, we introduce the following
\begin{Def}\label{def:compatible}
Let $(K,\eta)$ and $(K',\eta)$ be two almost para-Hermitian structures on a manifold $\PS$. We say \textbf{$K'$ is compatible with $K$} if the eigenbundles of $K'$ are weakly integrable with respect to $K$.
\end{Def}
Therefore, any almost para-Hermitian structure is always compatible with itself.
We will now study the compatibility of $K_B$ with $K$. Since $K_B$ is completely determined by $(K,\eta)$ and $b$, its compatibility with $K$ can be expressed purely in terms of $b$.

We first prove the following formula:
\begin{lemma}\label{lem:formula-KB}
Let $K_B$ be a B-transformation of a para-Hermitian structure $(\eta, K)$ and let $\PPB=\frac{1}{2}(\id+K_B)$ be the projection onto the $+1$-eigenbundle of $K_B$. Then
\begin{align*}
\eta(\bl \PPB X,\PPB Y\br^D,\PPB Z)=\rd_+b+(\Lambda^3\eta)[b,b]_-,
\end{align*}
where $\rd_+$ is the Lie algebroid differential of $T_+$, $\brac_-$ is the Schouten bracket of $T_-$ and $b$ denotes both the two-form and bivector \eqref{eq:b-beta} corresponding to B.
\end{lemma}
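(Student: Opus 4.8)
The plan is to compute $\eta(\bl \PPB X,\PPB Y\br^D, \PPB Z)$ directly using the canonical connection $\n^c$ of the para-K\"ahler structure $(K,\eta)$, which by the Lemma preceding Proposition~\ref{prop:Dbrac-contorsion} equals the Levi-Civita connection $\lc$; hence $\bracd^D = \bracd^{\lc}$ and I may freely use that $\lc$ is torsionless and metric. First I would substitute $\PPB X = x_+ + B x_+$, $\PPB Y = y_+ + By_+$, $\PPB Z = z_+ + Bz_+$ into the defining formula \eqref{D-brac-con}. Because the D-bracket is $\RR$-bilinear in each slot (the connection piece) this expands into eight terms; the purely $+$ term $\eta(\bl x_+,y_+\br^D,z_+)$ contributes $N_+(x_+,y_+,z_+)$ plus cyclic, which vanishes since $K$ is para-K\"ahler, while the purely $-$ term $\eta(\bl Bx_+,By_+\br^D, Bz_+)$ vanishes because $T_-$ is $\eta$-isotropic and $\n^c$ preserves $T_-$. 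So the content sits in the six mixed terms.

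Next I would organize the mixed terms by how many factors of $B$ appear. The terms with a single $B$ will, after using $\eta(Bx_+,y_+) = b_+(x_+,y_+)$ and the Cartan-type identity $\rd_+b_+(x_+,y_+,z_+) = \sum_{\mathrm{Cycl.}} x_+[b_+(y_+,z_+)] - b_+([x_+,y_+],z_+)$ (valid since $T_+$ is integrable), assemble precisely into $\rd_+ b_+(x_+,y_+,z_+)$; here I will need that $\n^c$ is metric and preserves the eigenbundles so that contractions like $\eta(\n^c_{x_+} By_+, z_+)$ can be turned into derivatives of $\eta(By_+,z_+)$ modulo terms lying in the wrong eigenbundle, and that the torsion-freeness of $\lc$ converts $\n^c_{x_+}y_+ - \n^c_{y_+}x_+$ into $[x_+,y_+]$. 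The terms with two factors of $B$ (one $B$ sitting in the argument of the connection and another inside it, or two inside) involve expressions of the shape $\eta(\n^c_{x_+}(By_+), Bz_+)$, which I rewrite via $b_- = B\eta^{\inv}$ as a covariant derivative of the bivector $b_-$ paired into one-forms $\eta(x_+),\eta(y_+),\eta(z_+)$; summing cyclically and invoking Lemma~\ref{lem:schouten} (with the torsion-free connection $\lc$) identifies this with the Schouten bracket $[b_-,b_-]_-$ evaluated on those one-forms, i.e. with $(\Lambda^3\eta)[b,b]_-$ after raising with $\eta$.

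The main obstacle will be bookkeeping the mixed terms correctly: making sure the single-$B$ contributions really close up to the full antisymmetrized $\rd_+b_+$ (rather than $\rd_+b_+$ plus spurious torsion or curvature terms) and that the double-$B$ contributions do not produce extra terms proportional to $\lc_X\omega$, which would spoil the identification — this is exactly where para-K\"ahlerness ($\lc K = 0$, Lemma~\ref{lem:herm-kahl}) is essential, since it kills all terms in which a derivative hits the projector $\PP$. A secondary subtlety is tracking the combinatorial factors: the $[b,b]_-$ term must come out with the normalization implicit in \eqref{eq:intro-maurer-cartan}, so I would fix conventions once (the $\frac12$ issue flagged in the footnote to Lemma~\ref{lem:schouten}) and check the coefficient by specializing to the flat model where $b = b_{ij}dx^i\w dx^j$ and everything reduces to the familiar coordinate expressions for $\rd_+$ and the Schouten bracket. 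Once both pieces are matched, the four-term cancellation of everything else gives the stated identity.
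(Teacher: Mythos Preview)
Your overall plan is workable in the para-K\"ahler case, but the lemma is stated for a \emph{para-Hermitian} structure $(K,\eta)$, not a para-K\"ahler one. The identification $\n^c=\lc$ (and hence $\bracd^D=\bracd^{\lc}$) that you invoke at the outset is therefore not available, and your assertion that ``para-K\"ahlerness is essential'' is incorrect: the paper establishes the identity in the stated para-Hermitian generality, and it is in that generality that the lemma is then used in Proposition~\ref{prop:weak-int-maurer-cartan}.

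The paper sidesteps both the eight-term expansion and the para-K\"ahler hypothesis by computing the single building block $\eta(\n^c_{\PPB X}\PPB Y,\PPB Z)$ directly. Using only that $\n^c=\PP\lc\PP+\PPt\lc\PPt$ preserves the eigenbundles, that $T_\pm$ are $\eta$-isotropic, and that $\lc$ is metric, one obtains in two lines that this scalar equals $\lc_{\xp+B\xp}b(\yp,\zp)$; the D-bracket is then simply the cyclic sum of three such terms. The $\lc_{\xp}$ pieces assemble into $\rd_+b$ (torsion-freeness of $\lc$ plus integrability of $T_+$), and the $\lc_{B\xp}$ pieces into $(\Lambda^3\eta)[b,b]_-$ via Lemma~\ref{lem:schouten}. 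At no point is $\lc K=0$ used. The trick you are missing is precisely this passage from $\n^c$ to $\lc$ at the level of the scalar $\eta(\n^c_\cdot\cdot,\cdot)$ rather than at the level of the connection itself: it delivers a torsion-free connection where one is needed (for the Schouten identification) without forcing $\n^c=\lc$. Your route could in principle be salvaged for para-Hermitian $(K,\eta)$ by keeping $\n^c$ in the eight-term expansion and converting to $\lc$ term by term, but the paper's computation is considerably shorter.
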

\begin{proof}
First, we recall that $\n^c=\PP \lc\PP+\PPt \lc \PPt$ and note that
\begin{align*}
\eta(\n^c_{\PPB X}\PPB Y,\PPB Z)&=\eta(\lc_{\PPB X}\yp,B(\zp))+\eta(\lc_{\PPB X}B(\yp),\zp)\\
&=b(\zp,\lc_{\PPB X}\yp)+\lc_{\PPB X}b(\yp,\zp)+b(\lc_{\PPB X}\yp,\zp)\\
&=\lc_{\xp+B(\xp)}b(\yp,\zp),
\end{align*}
where we used $\eta(BX,Y)=b(X,Y)$. We now use $\n^c$ for the D-bracket along with the above calculation,
\begin{align*}
\eta(\bl \PPB X,\PPB Y\br^D,\PPB Z)=\mkern-18mu\sum_{Cycl.\ \xp,\yp,\zp}\mkern-18mu\lc_{\xp}b(\yp,\zp)+\lc_{B(\xp)}b(\yp,\zp),
\end{align*}
and applying Lemma \ref{lem:schouten} along with the relationship \eqref{eq:b-beta} yields the result.
\end{proof}
An immediate consequence of the above is
\begin{proposition}\label{prop:weak-int-maurer-cartan}
Let $(K_B,\eta)$ be a B-transformation of a para-Hermitian structure $(K,\eta)$. Then $K_B$ is compatible with $K$ if and only if
\begin{align}\label{eq:maurer-cartan}
\rd_+b+(\Lambda^3\eta)[b,b]_-=0.
\end{align}
\end{proposition}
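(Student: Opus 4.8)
The plan is to deduce the Proposition directly from Lemma~\ref{lem:formula-KB} by turning the compatibility condition into a pointwise scalar identity via an isotropy argument. Recall that, by Definition~\ref{def:compatible}, $K_B$ is compatible with $K$ exactly when both eigenbundles of $K_B$ are weakly integrable with respect to the D-bracket of $K$. So the first step is to dispose of the $-1$-eigenbundle: since $K$ and $K_B$ share their $-1$-eigenbundle $T_-$, and the preceding corollary shows that the eigenbundles of $K$ are always weakly integrable with respect to $K$ itself, the condition $\bl T_-,T_-\br^D\subset T_-$ holds automatically. Hence $K_B$ is compatible with $K$ if and only if its $+1$-eigenbundle $T_+^B=\im\PPB$ is weakly integrable with respect to $K$, i.e. $\bl\PPB X,\PPB Y\br^D\in T_+^B$ for all $X,Y\in\se(T\PS)$.

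Next I would convert this membership statement into a scalar one. Because $B$ is skew, $T_+^B$ is $\eta$-isotropic, and together with $T_-$ it gives a splitting $T\PS=T_+^B\oplus T_-$ into two Lagrangian subbundles; consequently $\eta$ restricts to a nondegenerate pairing between $T_-$ and $T_+^B$. From this one gets the key observation: a vector $W\in T\PS$ lies in $T_+^B$ if and only if $\eta(W,T_+^B)=0$ (writing $W=w_1+w_2$ along the splitting, $\eta(W,Z)=\eta(w_2,Z)$ for $Z\in T_+^B$, and this vanishes for all such $Z$ iff $w_2=0$). Applying this with $W=\bl\PPB X,\PPB Y\br^D$ and test vector $\PPB Z$, the condition above becomes equivalent to
\begin{align*}
\eta(\bl\PPB X,\PPB Y\br^D,\PPB Z)=0\qquad\text{for all }X,Y,Z\in\se(T\PS).
\end{align*}

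Finally I would invoke Lemma~\ref{lem:formula-KB}, which identifies the left-hand side with $\rd_+b+(\Lambda^3\eta)[b,b]_-$; vanishing of this expression for all arguments is precisely equation~\eqref{eq:maurer-cartan}, and the proof is complete. I do not expect a serious obstacle here: the content is essentially packaged in Lemma~\ref{lem:formula-KB}, and the only step requiring a moment of care is the Lagrangian-splitting characterization $W\in T_+^B\iff\eta(W,T_+^B)=0$, which is what allows one to test the D-bracket only against $\PPB Z$ rather than against arbitrary sections of $T\PS$. One should also double-check the bookkeeping that the $-1$-eigenbundle part is genuinely automatic, i.e. that $\PPtB X=\xt-B(\xp)$ indeed lies in $T_-$ for every $X$, which is immediate since $B$ maps $T_+$ into $T_-$.
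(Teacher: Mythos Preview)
Your proof is correct and follows essentially the same approach as the paper: reduce compatibility to weak integrability of $T_+^B$ (since the shared $-1$-eigenbundle $T_-$ is automatically weakly integrable), use maximal isotropy of $T_+^B$ to rewrite this as the vanishing of $\eta(\bl\PPB X,\PPB Y\br^D,\PPB Z)$, and then invoke Lemma~\ref{lem:formula-KB}. Your write-up is in fact more explicit than the paper's, which compresses the isotropy argument into a single clause.
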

\begin{proof}
$T_+^B$ is weakly integrable with respect to $K$ if and only if
\begin{align*}
\eta(\bl \PPB X,\PPB Y\br^D,\PPB Z)=0,
\end{align*}
since $T_+^B$ is maximally isotropic with respect to $\eta$ and $K_B$ and $K$ share the $-1$-eigenbundle. The result of lemma \ref{lem:formula-KB} then yields the equation \eqref{eq:maurer-cartan}.
\end{proof}



In light of Proposition \ref{prop:weak-int-maurer-cartan}, we argue that the notion of weak integrability is a more natural condition to consider in the context of deformations of para-Hermitian manifolds than Frobenius integrability since it is equivalent to the Maurer-Cartan type equation \eqref{eq:maurer-cartan}. Exploring the geometrical meaning of weak integrability in more detail as well as its relationship to Frobenius integrability is an interesting problem for future work.

\begin{Rem}
The presented results share many similarities with the deformation theory of Dirac structures \cite{gualtieri2017deformation}. This is not entirely surprising, since the distribution $T_+^B$ can be seen as an almost Dirac structure for either of the two Courant algebroids $(T\PS,\eta,\PPpm,\bracd_\pm)$.
\end{Rem}

\subsection{Para-K\"ahler Manifolds and Fluxes}
We now restrict our discussion to the case when $(K,\eta)$ is para-K\"ahler, meaning that the fundamental form $\omega=\eta K$ is closed. In this case the compatibility of $K_B$ with $K$ is easily related to the Frobenius integrability. Moreover, the D-bracket of $(K_B,\eta)$ is the twisted D-bracket and the fluxes are given by the obstruction to the compatibility of $K_B$ with $K$.

\begin{lemma}\label{lem:3-0B}
Let $(K_B,\eta)$ be a B-transformation of a para-K\"ahler structure. Then
\begin{align*}
\rd_+b+(\Lambda^3\eta)[b,b]_-=\rd b^{(+3,-0)_B},
\end{align*}
where $(+3,-0)_B$ denotes the bigrading \eqref{eq_plusminus_decomp} with respect to $K_B$.
\end{lemma}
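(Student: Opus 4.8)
The plan is to expand the left-hand side $\rd_+b+(\Lambda^3\eta)[b,b]_-$ using the flat Levi-Civita connection available because $(K,\eta)$ is para-K\"ahler, and to recognize the result as the totally $(+3,-0)_B$-component of $\rd b$, where $b=b_+$ is regarded as a $2$-form on $T\PS$. First I would recall that on a para-K\"ahler manifold $\n^c=\lc$ and $\lc K=0$ (Lemma \ref{lem:herm-kahl}), so $\lc$ preserves both $T_+$ and $T_-$; in particular the Lie algebroid differential $\rd_+$ of $T_+$ can be computed with $\lc$ restricted to $T_+$, giving
\begin{align*}
(\rd_+b)(\xp,\yp,\zp)=\mkern-18mu\sum_{Cycl.\ \xp,\yp,\zp}\mkern-18mu \lc_{\xp}b(\yp,\zp),
\end{align*}
exactly as in the ordinary Cartan formula but with only the $T_+$-directions appearing. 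Likewise, the Schouten bracket term is handled by Lemma \ref{lem:schouten} applied to the bivector $b_-$ of type $(+0,-2)$: using $\lc$ (which is torsionless) and the isomorphism $\eta:T_+\to T_-^*$ from \eqref{eq:b-beta}, $(\Lambda^3\eta)[b,b]_-(\xp,\yp,\zp)$ becomes $\sum_{Cycl.}\lc_{B(\xp)}b(\yp,\zp)$ after lowering indices with $\eta$ and using $b(\eta(\cdot),\eta(\cdot))=b_-$. Thus the left-hand side equals $\sum_{Cycl.\ \xp,\yp,\zp}\bigl(\lc_{\xp}+\lc_{B(\xp)}\bigr)b(\yp,\zp)$, which is precisely the expression computed inside the proof of Lemma \ref{lem:formula-KB}.

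Next I would identify this with $\rd b$ evaluated on $\PPB X,\PPB Y,\PPB Z$. Since $\PPB(X)=\xp+B(\xp)$ and $b=b_+$ annihilates any slot lying in $T_-$, one has $b(\PPB Y,\PPB Z)=b(\yp,\zp)$, and the Cartan formula for the genuine de Rham differential $\rd b$ on $T\PS$ gives
\begin{align*}
(\rd b)(\PPB X,\PPB Y,\PPB Z)=\mkern-18mu\sum_{Cycl.}\mkern-8mu (\PPB X)\bigl[b(\PPB Y,\PPB Z)\bigr]-b\bigl([\PPB X,\PPB Y],\PPB Z\bigr).
\end{align*}
Rewriting the directional derivatives and the Lie brackets with the torsionless $\lc$, all the ``Christoffel'' contributions that would involve $\lc_{\cdot}(\PPB\cdot)$ either drop because $b$ kills $T_-$-components or reassemble into $\lc_{\xp+B(\xp)}b(\yp,\zp)$; the bracket terms contribute nothing new since $[\PPB X,\PPB Y]$ hits $b$ only through its $T_+$-part, and those pieces are exactly accounted for by the symmetrization of the $\lc b$ terms. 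Collecting everything yields $(\rd b)(\PPB X,\PPB Y,\PPB Z)=\sum_{Cycl.}(\lc_{\xp}+\lc_{B(\xp)})b(\yp,\zp)$, i.e. the left-hand side, and since $(\rd b)(\PPB X,\PPB Y,\PPB Z)$ is by definition the $(+3,-0)_B$-component $\rd b^{(+3,-0)_B}(X,Y,Z)$, the lemma follows.

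The main obstacle I anticipate is the bookkeeping in the second step: one must be careful that the ``extra'' terms generated when passing between $\lc$ and ordinary derivatives/Lie brackets in the Cartan formula genuinely cancel against each other, rather than just invoking $\lc K=0$ loosely. The cleanest route is probably to avoid Cartan's formula on the right entirely and instead argue from Proposition \ref{prop:Dbrac-contorsion} together with Lemma \ref{lem:formula-KB}: on a para-K\"ahler manifold $\bracd^D=\bracd^{\lc}$, so $\eta(\bl\PPB X,\PPB Y\br^D,\PPB Z)$ is already known from Lemma \ref{lem:formula-KB} to equal $\rd_+b+(\Lambda^3\eta)[b,b]_-$, and separately it is totally skew in $\PPB X,\PPB Y,\PPB Z$ and depends only on their $\omega_B$-isotropic $+1_B$-parts, which is exactly what characterizes the $(+3,-0)_B$-part of a closed-up-to-$b$ form; matching this against $\rd b^{(+3,-0)_B}$ via the definition of the bigrading with respect to $K_B$ then gives the identity with no further computation. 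Either way the substantive content is the identification of the projector $\PPB$ acting on $b$ with the shift $\xp\mapsto\xp+B(\xp)$ inside the connection, which is immediate from $b(T_-,\cdot)=0$.
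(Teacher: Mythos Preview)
Your proposal is correct and follows essentially the same approach as the paper: both sides are shown to equal $\sum_{Cycl.}\lc_{\xp+B(\xp)}b(\yp,\zp)$, using that $\lc K=0$ on a para-K\"ahler manifold so $\lc$ preserves $T_\pm$. The paper streamlines your second step by first isolating the observation that $\lc_X b$ remains of type $(+2,-0)$ (Lemma~\ref{lem:lc_b}) and then applying the torsion-free connection formula $\rd b(A,B,C)=\sum_{Cycl.}(\lc_A b)(B,C)$ directly to $A=\PPB X$ etc., which immediately gives $\rd b^{(+3,-0)_B}=\sum_{Cycl.}(\lc_{\PPB X}b)(\yp,\zp)$ and bypasses the Cartan-formula bookkeeping you flag as the main obstacle.
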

Before proving Lemma \ref{lem:3-0B}, we first prove the following
\begin{lemma} \label{lem:lc_b}
Let $(K_B,\eta)$ be a B-transformation of a para-Hermitian structure $(\PS,\eta,K)$ and denote $B=\eta b$. Then $\n_X b$, where $\n$ is an adapted connection, is a type $(2,0)$ form with respect to $K$ for any vector field $X$.
\end{lemma}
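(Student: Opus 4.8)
The claim is that for $B = \eta b$ defining a B-transformation of a para-Hermitian structure, and $\n$ any adapted connection, $\n_X b$ is of type $(+2,-0)$ with respect to $K$ for every vector field $X$. The plan is to reduce the statement to two separate vanishing conditions: $(\n_X b)(y_-, \cdot) = 0$ and $(\n_X b)(\cdot, y_-) = 0$, i.e. that $\n_X b$ annihilates any vector in $T_-$. Since $b$ is already of type $(+2,-0)$, we have $b(y_-, z) = 0$ for all $z$; differentiating, $0 = \n_X(b(y_-,z)) = (\n_X b)(y_-,z) + b(\n_X y_-, z) + b(y_-, \n_X z)$. The last term vanishes because $b$ is $(+2,-0)$, so we get $(\n_X b)(y_-, z) = -b(\n_X y_-, z)$. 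Hence it suffices to show that $\n_X y_-$ has no $T_+$-component for $y_- \in \se(T_-)$, or more precisely that $b(\PP \n_X y_-, z) = 0$; but $b$ only pairs $T_+$ with $T_+$, so I really need $b(\PP\n_X y_-, \PP z) = 0$, which would follow from $\PP \n_X y_- = 0$, i.e. $\n_X y_- \subset T_-$ for all $X$.

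The crucial input is therefore the compatibility of an adapted connection with the splitting. Here is where I expect the main subtlety. Property $2)$ of Definition \ref{def:adapted_conn} only guarantees $\n_{x_+} y_- \subset T_-$, i.e. preservation of $T_-$ when differentiating \emph{along} $T_+$; and the analogous $n$-adapted condition gives $\n_{x_-} y_+ \subset T_+$. For a fully adapted connection (both p- and n-adapted) one would like $\n_X y_- \subset T_-$ for \emph{all} $X$, but the definition does not obviously give $\n_{x_-} y_- \subset T_-$ directly. I would address this by invoking property $1)$ together with $2)$: since $\n_{x_+}\eta = 0$ and the $n$-adapted analog gives $\n_{x_-}\eta = 0$ as well (an adapted connection being metric), $\n$ is a metric connection, and a metric connection preserves the orthogonal complement of any parallel subbundle. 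Concretely, for $y_- \in \se(T_-)$ and $z_- \in \se(T_-)$, writing $\n_X y_- = u_+ + u_-$, I compute $\eta(\n_X y_-, z_-) = \eta(u_+, z_-)$; if $T_-$ were $\n$-parallel this would express $X[\eta(y_-,z_-)] - \eta(y_-, \n_X z_-) = 0 - 0 = 0$, forcing $u_+ = 0$ since $\eta$ pairs $T_+$ nondegenerately with $T_-$. So the real content is \emph{$T_-$ is a parallel subbundle of an adapted connection}; once that is established the computation closes. Alternatively, and perhaps more cleanly, one can simply use the \emph{canonical} connection $\n^c$ as the adapted connection: by \eqref{eq:canonica_conn_PP}, $\n^c_X Y = \PP\lc_X\PP Y + \PPt \lc_X \PPt Y$, which manifestly satisfies $\n^c_X y_- \subset T_-$ for \emph{every} $X$. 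Since Theorem \ref{prop:metric-courant-TP} (and the surrounding discussion) shows all adapted connections yield the same bracket, and since the statement of the lemma is really about $\n_X b$ as it enters the D-bracket, it is legitimate to prove the type assertion using $\n^c$.

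So the concrete route I would take: (i) choose $\n = \n^c$, the canonical connection, and recall $\n^c_X y_- \in \se(T_-)$ for all $X$ from \eqref{eq:canonica_conn_PP}; (ii) use that $b$ is of type $(+2,-0)$, so $b$ vanishes whenever one argument lies in $T_-$; (iii) differentiate the identity $b(y_-, z) = 0$ along $X$ using the Leibniz rule for $\n$, obtaining $(\n_X b)(y_-, z) = -b(\n_X y_-, z) - b(y_-, \n_X z) = 0$, where the first term on the right vanishes because $\n_X y_- \in \se(T_-)$ and the second because the second slot sees $y_- \in T_-$; (iv) conclude by skew-symmetry of $b$ that $(\n_X b)(\cdot, y_-) = 0$ as well, so $\n_X b$ is of type $(+2,-0)$. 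If instead one insists on an arbitrary adapted $\n$, insert as a preliminary step the observation that adapted $\Rightarrow$ metric $\Rightarrow$ $T_-$ is $\n$-parallel (using that $T_-$ is the $\eta$-orthogonal complement of $T_+$, itself parallel by the n-adapted analog of property $2)$), which is the only genuinely nontrivial point.

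\textbf{Expected main obstacle.} The only real obstacle is justifying $\n_X y_- \subset T_-$ for \emph{all} $X$ from the axioms of an adapted connection — the axioms as stated only give this along $T_+$. Sidestepping this by working with the explicit canonical connection $\n^c$, whose formula \eqref{eq:canonica_conn_PP} makes eigenbundle-preservation transparent, makes the proof essentially a one-line Leibniz-rule computation; I would present it that way, perhaps remarking that the general adapted case follows since all adapted connections are metric and hence preserve both eigenbundles.
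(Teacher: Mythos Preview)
Your proposal is correct and follows essentially the same route as the paper: the paper's proof simply asserts that an adapted $\n$ ``preserves $T_\pm$'' and then applies the Leibniz rule exactly as you do, obtaining $\n_X b(Y,Z)=\n_X b(y_+,z_+)$ in one line. One small caveat: your parenthetical justification that ``$T_-$ is the $\eta$-orthogonal complement of $T_+$'' is not right---since $T_\pm$ are maximally isotropic one has $T_\pm^\perp=T_\pm$, so the metric-connection argument as phrased does not close; but your primary route via the canonical connection $\n^c$, which manifestly preserves the eigenbundles by \eqref{eq:canonica_conn_PP}, is exactly the right fix and is effectively what the paper is invoking.
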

\begin{proof}
Since $b$ itself is type $(+2,-0)$ and $\n$ preserves $T_\pm$,
\begin{align*}
\n_Xb(Y,Z)=&Xb(Y,Z)-b(\n_XY,Z)-b(Y,\n_XZ)\\
=&Xb(y_+,z_+)-b(\n_Xy_+,z_+)-b(y_+,\n_Xz_+)=\n_Xb(y_+,z_+).
\end{align*}
\end{proof}
\begin{proof}[Proof of Lemma \ref{lem:3-0B}]
$\n^c$ is an adapted connection and for a para-K\"ahler manifold, $\n^c=\lc$. Therefore, by Lemma \ref{lem:lc_b}, $\lc_X b$ is a two-form of type $(+2,-0)$ and we compute
\begin{align*}
\rd b^{(+3,-0)_B}(X,Y,Z)&=\rd b(\PPB X,\PPB Y,\PPB Z)=\mkern-18mu\sum_{Cycl.\ X,Y,Z}\mkern-18mu\lc_{\PPB X}b(\PPB Y,\PPB Z)\\
&=\mkern-18mu\sum_{Cycl.\ \xp,\yp,\zp}\mkern-18mu\lc_{\xp+B(\xp)}b(\yp,\zp)
\end{align*}
which yields the result.
\end{proof}

Therefore, when $(K_B,\eta)$ is a B-transformation of a para-K\"ahler structure, by Lemma \ref{lem:30integr}, we see that integrability of $T_+^B$ implies weak integrability of $T_+^B$ with respect to $K$ and therefore compatibility of $K_B$ with $K$. If moreover $(K_B,\eta)$ is nearly para-K\"ahler, the Frobenius integrability is equivalent to the compatibility of $K_B$ with $K$.

The D-bracket for $(K_B,\eta)$ is given by the following:
\begin{proposition}\label{prop:twistedDbrac}
Let $K_B$ be a B-transformation of a para-K\"ahler structure $(\PS,\eta,K)$. Then the D-bracket associated to $K_B$ is given by
\begin{align}\label{twistedDbrac}
\eta(\bl X,Y\br^{D,B},Z)=\eta(\bl X,Y\br^D ,Z)-(\rd b)(X,Y,Z).
\end{align}
where $\bl\ ,\ \br^D$ denotes the D-bracket of $K$.
\end{proposition}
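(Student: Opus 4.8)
The plan is to compute the D-bracket of $K_B$ directly from \eqref{D-brac-con}, using the canonical connection $\n^{c,B}$ of the structure $(\PS,\eta,K_B)$ as the adapted connection for $K_B$, and then to rewrite everything in terms of the Levi-Civita connection $\lc$ of $\eta$, exploiting that $(K,\eta)$ is para-K\"ahler so that $\lc\omega=0$ (Lemma \ref{lem:herm-kahl}) and $\n^c=\lc$. First I would expand $\n^{c,B}$: by Definition \ref{def:canonical_conn} applied to $(\PS,\eta,K_B)$ together with $\omega_B=\omega+2b$ (see \eqref{omegaB}),
\begin{align*}
\eta(\n^{c,B}_X Y,Z)=\eta(\lc_X Y,Z)-\frac12(\lc_X\omega_B)(Y,K_B Z)=\eta(\lc_X Y,Z)-(\lc_X b)(Y,K_B Z),
\end{align*}
where the second equality uses $\lc\omega=0$.

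Next, since $\lc=\n^c$ is an adapted connection on a para-K\"ahler manifold, Lemma \ref{lem:lc_b} gives that $\lc_X b$ is of type $(+2,-0)$ with respect to $K$; since moreover $\PP(K_B Z)=z_+$ (immediate from the matrix form of $K_B$ in the splitting $T_+\oplus T_-$), we obtain $(\lc_X b)(Y,K_B Z)=(\lc_X b)(y_+,z_+)=(\lc_X b)(Y,Z)$, and hence $\eta(\n^{c,B}_X Y,Z)=\eta(\lc_X Y,Z)-(\lc_X b)(Y,Z)$. Substituting into \eqref{D-brac-con}, the $\lc$-terms assemble into $\eta(\bl X,Y\br^{\lc},Z)=\eta(\bl X,Y\br^{D},Z)$ (the equality because $\n^c=\lc$), leaving
\begin{align*}
\eta(\bl X,Y\br^{D,B},Z)=\eta(\bl X,Y\br^{D},Z)-(\lc_X b)(Y,Z)+(\lc_Y b)(X,Z)-(\lc_Z b)(X,Y),
\end{align*}
and these three remaining terms equal $-(\rd b)(X,Y,Z)$ by the standard expression $(\rd b)(X,Y,Z)=(\lc_X b)(Y,Z)-(\lc_Y b)(X,Z)+(\lc_Z b)(X,Y)$ for the exterior derivative of a $2$-form through a torsionless connection (the Cartan formula plus torsion-freeness), which is exactly \eqref{twistedDbrac}.

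I do not expect a genuine obstacle. The two points that warrant care are: that $\n^{c,B}$ is indeed adapted to $K_B$, so that it may legitimately be used in \eqref{D-brac-con} to compute $\bl\cdot,\cdot\br^{D,B}$; and the type bookkeeping that collapses $(\lc_X b)(Y,K_B Z)$ to $(\lc_X b)(Y,Z)$, for which it is essential that $\lc_X b$ be $(+2,-0)$ with respect to the \emph{original} $K$ --- the content of Lemma \ref{lem:lc_b} --- and not with respect to $K_B$. A more structural alternative to the last step is to invoke Proposition \ref{prop:Dbrac-contorsion} for $(\PS,\eta,\omega_B)$, use $\rd\omega_B=2\rd b$, and note that $\rd b$ has no $(+0,-3)_B$ or $(+1,-2)_B$ component because $b$ is of type $(+2,-0)_B$ and $T_-^B=T_-$ is involutive (as $K$ is integrable), so that the four-term bracket in Proposition \ref{prop:Dbrac-contorsion} reduces to $\rd b$.
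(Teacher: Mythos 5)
Your proof is correct and follows essentially the same route as the paper: the paper's proof simply cites Proposition \ref{prop:Dbrac-contorsion} applied to $(\eta,K_B)$ together with $\rd\omega_B=2\,\rd b$ and Lemma \ref{lem:lc_b} (which kills the $(+0,-3)_B$ and $(+1,-2)_B$ components), i.e.\ exactly the ``structural alternative'' you describe at the end. Your main computation --- expanding $\n^{c,B}$ using $\lc\omega=0$ and collapsing $(\lc_X b)(Y,K_B Z)$ to $(\lc_X b)(Y,Z)$ via the type statement of Lemma \ref{lem:lc_b} --- is just the explicit, unpackaged form of that same argument, with the type bookkeeping handled correctly.
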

\begin{proof}
Because $\omega_B=\omega+2b$, and $K$ is K\"ahler ($\rd\omega=0$), we conclude that $\rd \omega_B=2\ \rd b$ and as a consequence of Lemma \ref{lem:lc_b} we find by a direct calculation that the only non-zero components of $\rd b$ are the $(+3,-0)_B$ and $(+2,-1)_B$ components. Proposition \ref{prop:Dbrac-contorsion} then yields the result.
\end{proof}

%

We now describe the different components of the twist $\rd b$ in terms of fluxes. For this, we need to write the components of $\rd b$ in the splitting corresponding to $K_B$; while the frame of $T\PS$ diagonalizing $K_B$ is $\{\p_i^B=\p_i+b_{ij}\pt^j,\pt_j\}$, the dual  frame of $T^*\PS$ is $\{dx^i,d\tl{x}^B_i=d\tl{x}_i+b_{ij}dx^j\}$:
\begin{align*}
\rd b&=\p_ib_{jk}dx^i\w dx^j\w dx^k+\pt^ib_{jk}d\tl{x}_i\w dx^j\w dx^k\\
&=\p_ib_{jk}dx^i\w dx^j\w dx^k+\pt^ib_{jk}d\tl{x}^B_i\w dx^j\w dx^k+b_{il}\pt^lb_{lk}dx^i\w dx^j\w dx^k.
\end{align*}

\begin{itemize}
\item The $(+3,-0)_B$ component of $\rd b$ is by Lemma \ref{lem:3-0B} given by
\begin{align*}
\rd b^{(+3,-0)_B}=\rd_+b+(\Lambda^3\eta)[b,b]_-
\end{align*}
where $\rd_+ b$ is also the $(+3,-0)$ component of $\rd b$ with respect to $K$. The two terms correspond to the well known \textbf{H-flux} and a (dual) \textbf{R-flux}
\begin{align*}
\rd_+b&=H=\p_ib_{jk}dx^i\w dx^j\w dx^k,\\
(\Lambda^3\eta)[b,b]_-&=\tl{R}=b_{il}\pt^lb_{lk}dx^i\w dx^j\w dx^k,
\end{align*}
The H-flux is a 3-form on $\FF_+$, while $[b,b]_-$ is a three-vector on $\FF_-$. In physics the R-flux is usually a three-vector on the space-time manifold (in our case $\FF_+$), which is why we call $(\Lambda^3\eta)[b,b]_-$ the dual R-flux. The usual R-flux on $\FF_+$ would then be a result of the $B_-$-transformation \eqref{rem:Bminus} corresponding to a bivector on $T_+$.
\item The $(+2,-1)_B$ component of $\rd b$ reads
\begin{align*}
\rd b^{(+2,-1)_B}=\pt^ib_{jk}d\tl{x}^B_i\w dx^j\w dx^k,
\end{align*}
In this expression we recognize the (dual) \textbf{Q-flux}. We again see that this expression has the opposite index structure to the usual Q-flux due to the fact that $b$ is a bivector on $\FF_-$ as opposed to $\FF_+$, hence the name dual Q-flux.
\end{itemize}
The remaining components of $\rd b$ vanish. We notice that the $H$ and $\tl{R}$ fluxes are related, giving the $(3,0)_B$ part of $\rd b$ (and therefore $\rd\omega_B$, by \eqref{omegaB} and $\rd \omega=0$), and therefore the obstruction to compatibility of $K_B$ with $K$. This obstruction,
\begin{align*}
\mathcal{H}_{ijk}=\p_{[i}b_{jk]}+b_{[il}\pt^{\small l}b_{jk]},
\end{align*}
is in the physics literature sometimes called the \textit{covariantized H-flux} or an \textit{H-flux without section condition} \cite{andriot2012non}. 
\begin{Rem}
We have seen that the all the fluxes correspond to the same data -- the map $B$, which can be seen either as a two-form or as a bivector -- and the resulting fluxes, given by $\rd_+ b$, $\lc_{x_-}b$ and $[b,b]_-$ are just different differential operations on $b$. This relationship between fluxes reflects what happens in physics, where the $H$, $Q$ and $R$ fluxes are related by \textit{T-duality}, which in simplified terms amounts to exchanging the individual $x_i$ and $\tl{x}^i$ coordinates, i.e. the coordinates between the $\FF_+$ and $\FF_-$ manifolds. For example, if one starts with the $H_{123}$ component of $H$, i.e. the component of $H$ along $x^1$, $x^2$ and $x^3$, after performing T-duality along each of these coordinates, one ends up with an R-flux along the T-dual coordinates $\tl{x}_1$, $\tl{x}_2$ and $\tl{x}_3$, $R_{123}$. In our case, this relationship is realized by the isomorphism of $\eta$ (and relabelling of coordinates).
\end{Rem}

Another standard flux appearing in physics literature is the \textbf{f-flux}. This can be easily included in our discussion by the diagonal action on the tangent bundle frame given by
\begin{align}\label{structure-gp}
E_A=
\begin{pmatrix}
A & 0 \\
0 & (A^{-1})^*
\end{pmatrix} \in \End(T\PS).
\end{align}
Elements of this type form the structure group of para-Hermitian manifolds and therefore preserve $K$. Denote $e_a=[A]_a^i\p_i$ and $e^a=[(A^{-1})^*]_j^a\pt^j$. The f-flux then appears as
\begin{align*}
\eta(\bl e_a,e_b \br^D,e^c)=f_{ab}^c.
\end{align*}

%

\subsection{Interpretation as a Generalized Geometry on $(\TT) \FF_\pm$}\label{sec:GG-relationship}
The B-transformation can be related to the $b$-field transformation and the $\beta$-field transformation of the generalized tangent bundles $(\TT)\FF_\pm$ and the corresponding Dorfman brackets as follows:

On a p-para-Hermitian manifold, $e^B$ gives a $b$-field transformation of $(\TT)\FF_+$ by $b_+$ via $e^{b_+}\rho_+=\rho_+e^B$:
\begin{align*}
e^{b_+}=\begin{pmatrix}
\id & 0 \\
b_+ & \id
\end{pmatrix} \in \End((\TT)\FF_+).
\end{align*}
This corresponds to a change of a splitting of the exact sequence of vector bundles over $\FF_+$
\begin{align*}
0\longrightarrow T^* \overset{\tl{i}}{\longrightarrow}\TT \overset{p}{\longrightarrow}T\longrightarrow 0,
\end{align*}
on the right; we modify the natural splitting given by the inclusion $i:T\hookrightarrow \TT$ by a two-form to get a new splitting:
\begin{align*}
 i_{b_+}=i+\tl{i}\circ {b_+}: T\rightarrow \TT,\quad \xp\mapsto \xp+b_+(\xp).
\end{align*}
The corresponding Dorfman bracket is then acquired from \eqref{twistedDbrac} by projecting the derivatives:
\begin{align*}
\eta(\bl X,Y\br^B_+,Z)&=\eta(\n^{c,B}_{\PP X}Y-\n^{c,B}_{\PP Y}X,Z)+\eta(\n^{c,B}_{\PP Z}X,Y)\\
&=\eta(\bl X,Y\br_+,Z)-\rd_+b_+(\xp,\yp,\zp),
\end{align*}
where $\bl X,Y\br_+$ is the usual Dorfman bracket \eqref{dorfman_projections}. Mapping by $\rho_+$ then recovers the \textbf{H-twisted} Dorfman bracket on $(\TT)\FF_+$.

Similarly, on an n-para-Hermitian manifold, $e^{B}$ gives a $\beta$-field transformation of $(\TT)\FF_-$ by $b_-$ via $e^{b_-}\rho_-=\rho_-e^{B}$:
\begin{align*}
e^{b_-}=\begin{pmatrix}
\id & b_- \\
0 & \id
\end{pmatrix} \in \End((\TT)\FF_-).
\end{align*}
\section{Example: Tangent Bundle of a Riemannian Manifold}
Here we present simple well-known example \cite{Szabo-paraherm,tangentbundle} of a para-Hermitian structure and apply the introduced formalism explicitly. This example as well as other examples for the cotangent bundle and the Drinfel'd doubles are in this context more broadly discussed in \cite{Szabo-paraherm}.

Let $(M,g)$ be an $n$-dimensional Riemannian manifold. The total space of the tangent bundle, $TM$ inherits an n-para-K\"ahler structure in the following way. The $T_-$ distribution is given by the vertical distribution of the tangent bundle projection $\pi: TM\rightarrow M$, i.e. at any point of $TM$ it is spanned by the vectors tangent to the fibres of $\pi$. The distribution $T_+$ is given by a choice of a horizontal distribution; for this we choose a metric connection $\n$, $\n g=0$ and $T_+$ is defined as the horizontal subbundle of $T(TM)$ with respect to $\n$. In local coordinates $(x^i,v^i)$, the splitting $T(TM)=T_+\oplus T_-$ is given by
\begin{align}\label{ex:TM1}
T_+=span\left\{\p_i-\Gamma_{ij}^k v^j \frac{\p}{\p v^k}\coloneqq H_i\right\}_{i=1\cdots n},\quad T_-=span \left\{\frac{\p}{\p v^i}\coloneqq V_i\right\}_{i=1\cdots n},
\end{align}
where $\p_i=\frac{\p}{\p x^i}$ and the Christoffel symbols $\Gamma_{ij}^k$ are defined as $\n_{\p_i}\p_j=\Gamma_{ij}^k\p_k$. To define the split signature metric $\eta$, we simply put
\begin{align*}
\eta(H_i,H_j)=\eta(V_i,V_j)=0, \quad \eta(V_i,H_j)=\eta(H_i,V_j)=g_{ij}.
\end{align*}
We find that the frame $\{H^i,V^i\}_{i=1\cdots n}$ of $T_\pm^*$ dual to $\{H_i,V_i\}_{i=1\cdots n}$ is given by
\begin{align*}
H^i=dx^i,\quad  V^i=dv^i+\Gamma_{kj}^i v^jdx^k,
\end{align*}
so that $H^i(H_j)=V^i(V_j)=\delta^i_j$ and $H^i(V_j)=V^i(H_j)=0$. We can use this frame to write $\eta, \omega$ and $K$ in explicitly as
\begin{align*}
\eta=g_{ij}(V^i\otimes H^j+H^i\otimes V^j),\ \omega=g_{ij}H^i\w V^j,\ K=H_i\otimes H^i-V_i\otimes V^i.
\end{align*}
\paragraph*{Integrability}
The vertical distribution $T_-$ is clearly integrable with the integral leaves being the fibres $\pi^{-1}(x)$, $x\in M$. The horizontal distribution $T_+$, on the other hand, is not integrable, as
\begin{align*}
[H_i,H_j]=R^k_{ijl} v^l V_k,
\end{align*}
where $R^k_{ijl}$ are components of the Riemann curvature tensor of $\n$. The obstruction to integrability of $T_+$ is therefore given by the curvature of $\n$. We can also infer this from the expression for the exterior derivative of $\omega$
\begin{align*}
\rd \omega=T_{ij}^k g_{kl} H^i\w H^j\w V^l+\frac{1}{2}R_{ijkl}v^l H^i\w H^j \w H^k,
\end{align*}
which also shows that $\rd \omega$ has no $(+1,-2)$ or $(+0,-3)$ components, therefore the above defined structure is n-para-K\"ahler. Moreover, it is para-K\"ahler if and only if $\n$ is torsionless and its curvature vanishes. In other words, the above construction defines a para-K\"ahler structure on $TM$ if and only if $\n$ is the Levi-Civita connection of $g$ and $g$ is a flat metric.

\paragraph*{B-transformation and the D-bracket}
We will now assume that $g$ is flat and therefore $\n^g$, the Levi-Civita connection of $g$, induces a para-K\"ahler structure on the total space of $TM$. We can choose a coordinate system on $M$ in which the metric $g$ has constant coefficients and the Christoffel symbols vanish identically, implying $H_i=\p_i$ and $V^i=dv^i$. This also means that the Christoffel symbols of the Levi-Civita connection $\lc$ of $\eta$ vanish. The D-bracket $\bracd^D$ then takes the form \eqref{eq:D-bracket}, where the capital indices denote the splitting into horizontal and vertical components, 
\begin{align*}
X^I\p_I=X^i\p_i+\tl{X}^j\frac{\p}{\p v^j}.
\end{align*}
To twist the bracket, we choose a two-form $b=b_{ij}(x,v)dx^i\w dx^j$, such that $b \in \Omega^{(+2,-0)}(TM)$ and B-transform the para-K\"ahler structure on $TM$. The new (almost-)para-Hermitian structure is given by
\begin{align*}
\omega_B&=g_{ij}dx^i\w dv^j+2b_{ij}dx^i\w dx^j, \\
K_B&=\p_i\otimes dx^i-\frac{\p}{\p v^i}\otimes dv^i+2b_{ik}g^{kj}\frac{\p}{\p v^j}\otimes dx^i,
\end{align*}
the $+1$ eigenbundle is now given by
\begin{align}\label{ex:TM2}
T_+^B=span\left\{\p_i+b_{ik}g^{kj}\frac{\p}{\p v^j}\coloneqq H_i'\right\}_{i=1\cdots n},
\end{align}
which means that $T_-^*$ is spanned by $V'^i=dv^i-b_{jk}g^{ki}dx^j$. The twisted D-bracket can be explicitly written as
\begin{align*}
\bl X,Y\br^{D,B}=\bl X,Y\br^D&-\p_ib_{jk}dx^i\w dx^j\w dx^k-\frac{\p}{\p v^i}b_{jk}dv^i\w dx^j\w dx^k\\
=\bl X,Y\br^D&-\p_ib_{jk}dx^i\w dx^j\w dx^k-\frac{\p}{\p v^i}b_{jk}V'^i\w dx^j\w dx^k\\
&-b_{im}g^{ml}\frac{\p}{\p v^l}b_{jk}dx^i\w dx^j\w dx^k,
\end{align*}
and we can read off the covariantized H-flux and Q-flux:
\begin{align*}
\mathcal{H}_{ijk}=\p_{[i}b_{jk]}+b_{[im}g^{ml}\frac{\p}{\p v^l}b_{jk]},\quad Q_{ijk}=\frac{\p}{\p v^i}b_{jk}.
\end{align*}

\section{Comments, Conclusions and Outlook}
We have shown that the D-bracket appearing in physics literature can be defined on a general almost para-Hermitian manifold. Moreover, we related it to a certain deformation problem of para-Hermitian structures and defined a twisted D-bracket as the D-bracket corresponding to the deformed structure. Even though the compatibility of the deformed structure with the original one is equivalent to a Maurer-Cartan type equation, a more thorough and complete analysis of the deformation theory of para-K\"ahler and para-Hermitian manifolds is needed to fully understand the problem, which is one of the goals of our future work. A closely related problem is to use the presented formalism of deformations and fluxes to include all the complicated fluxes found in the physics literature and interpret their various relationships. We also wish to tie our results to the very similar results found in the work on deformations of Dirac structures and pinpoint the exact relationship between the two geometric set-ups.

Another future research direction is once again motivated by physics. One crucial aspect of the DFT we have not discussed here are the T-duality and generalized coordinate transformations, leading to a notion of so called T-folds \cite{hull2005geometry} -- objects patched together from the local coordinate patches by $O(n,n)$ transformations that go beyond the elements of the para-Hermitian structure group \eqref{structure-gp}: One considers off-diagonal elements that include exactly the B-transformations we discussed, but also T-duality transformations that exchange the frame vectors $\p_i\leftrightarrow \pt^i$. One might be able to realize this type of off-diagonal transformations as non-integrable para-Hermitian structures where the local coordinates $(x^i,\tl{x}_j)$ cannot be found on each patch of the manifold such that the $x^i$ and $\tl{x}_j$ are independently glued together to form a global foliations of the manifold. Instead, one would define a local B-transformation or T-duality transformation of the para-Hermitian structure on each patch such that the corresponding adapted coordinates would glue together between different patches according to the rules found in physics. Another future goal closely tied to this is examining an explicit example of such backgrounds found in String Theory and present the features of our construction explicitly by identifying the corresponding (almost) para-Hermitian manifold.

Last but not least, we plan to apply the acquired results to Born geometry \cite{freidel2014born,freidel2015metastring}, which enriches the para-Hermitian structure by adding a compatible Riemannian metric $\mathcal{H}$.
\section*{Acknowledgements}
The author would like to thank his supervisors Shengda Hu and Ruxandra Moraru for important consultations on various mathematical topics as well as guidance and help with completing this project. The author would also like to thank his supervisor Laurent Freidel and collaborator Felix Rudolph for developing some of the key ideas during their joint work on the related paper  \cite{freidel2017generalised}, which are also presented in this work.

This research was supported in part by Perimeter Institute for Theoretical Physics. Research at Perimeter Institute is supported by the Government of Canada through the Department of Innovation, Science and Economic Development Canada and by the Province of Ontario through the Ministry of Research, Innovation and Science.

\newpage

\bibliographystyle{JHEP}
\bibliography{mybib1}

\providecommand{\href}[2]{#2}\begingroup\raggedright\begin{thebibliography}{10}

\bibitem{Svoboda:2018rci}
D.~Svoboda, \emph{{Algebroid Structures on Para-Hermitian Manifolds}},
  \href{https://doi.org/10.1063/1.5040263}{\emph{J. Math. Phys.} {\bfseries 59}
  (2018) 122302}, [\href{https://arxiv.org/abs/1802.08180}{{\ttfamily
  1802.08180}}].

\bibitem{freidel2017generalised}
L.~Freidel, F.~J. Rudolph and D.~Svoboda, \emph{{Generalised Kinematics for
  Double Field Theory}},
  \href{https://doi.org/10.1007/JHEP11(2017)175}{\emph{JHEP} {\bfseries 11}
  (2017) 175}, [\href{https://arxiv.org/abs/1706.07089}{{\ttfamily
  1706.07089}}].

\bibitem{Tseytlin:1990nb}
A.~A. Tseytlin, \emph{{Duality Symmetric Formulation of String World Sheet
  Dynamics}}, \href{https://doi.org/10.1016/0370-2693(90)91454-J}{\emph{Phys.
  Lett.} {\bfseries B242} (1990) 163--174}.

\bibitem{Tseytlin:1990va}
A.~A. Tseytlin, \emph{{Duality symmetric closed string theory and interacting
  chiral scalars}},
  \href{https://doi.org/10.1016/0550-3213(91)90266-Z}{\emph{Nucl. Phys.}
  {\bfseries B350} (1991) 395--440}.

\bibitem{Siegel:1993xq}
W.~Siegel, \emph{{Two vierbein formalism for string inspired axionic gravity}},
  \href{https://doi.org/10.1103/PhysRevD.47.5453}{\emph{Phys. Rev.} {\bfseries
  D47} (1993) 5453--5459},
  [\href{https://arxiv.org/abs/hep-th/9302036}{{\ttfamily hep-th/9302036}}].

\bibitem{Siegel:1993th}
W.~Siegel, \emph{{Superspace duality in low-energy superstrings}},
  \href{https://doi.org/10.1103/PhysRevD.48.2826}{\emph{Phys. Rev.} {\bfseries
  D48} (1993) 2826--2837},
  [\href{https://arxiv.org/abs/hep-th/9305073}{{\ttfamily hep-th/9305073}}].

\bibitem{Hull:2009mi}
C.~Hull and B.~Zwiebach, \emph{{Double Field Theory}},
  \href{https://doi.org/10.1088/1126-6708/2009/09/099}{\emph{JHEP} {\bfseries
  09} (2009) 099}, [\href{https://arxiv.org/abs/0904.4664}{{\ttfamily
  0904.4664}}].

\bibitem{Aldazabal:2013sca}
G.~Aldazabal, D.~Marques and C.~Nunez, \emph{{Double Field Theory: A
  Pedagogical Review}},
  \href{https://doi.org/10.1088/0264-9381/30/16/163001}{\emph{Class. Quant.
  Grav.} {\bfseries 30} (2013) 163001},
  [\href{https://arxiv.org/abs/1305.1907}{{\ttfamily 1305.1907}}].

\bibitem{geissbuhler2013exploring}
D.~Geissbuhler, D.~Marques, C.~Nunez and V.~Penas, \emph{{Exploring Double
  Field Theory}}, \href{https://doi.org/10.1007/JHEP06(2013)101}{\emph{JHEP}
  {\bfseries 06} (2013) 101},
  [\href{https://arxiv.org/abs/1304.1472}{{\ttfamily 1304.1472}}].

\bibitem{freidel2014born}
L.~Freidel, R.~G. Leigh and D.~Minic, \emph{{Born Reciprocity in String Theory
  and the Nature of Spacetime}},
  \href{https://doi.org/10.1016/j.physletb.2014.01.067}{\emph{Phys. Lett.}
  {\bfseries B730} (2014) 302--306},
  [\href{https://arxiv.org/abs/1307.7080}{{\ttfamily 1307.7080}}].

\bibitem{freidel2015metastring}
L.~Freidel, R.~G. Leigh and D.~Minic, \emph{{Metastring Theory and Modular
  Space-time}}, \href{https://doi.org/10.1007/JHEP06(2015)006}{\emph{JHEP}
  {\bfseries 06} (2015) 006},
  [\href{https://arxiv.org/abs/1502.08005}{{\ttfamily 1502.08005}}].

\bibitem{Hull:2009zb}
C.~Hull and B.~Zwiebach, \emph{{The Gauge algebra of double field theory and
  Courant brackets}},
  \href{https://doi.org/10.1088/1126-6708/2009/09/090}{\emph{JHEP} {\bfseries
  09} (2009) 090}, [\href{https://arxiv.org/abs/0908.1792}{{\ttfamily
  0908.1792}}].

\bibitem{hohm2013towards}
O.~Hohm and B.~Zwiebach, \emph{{Towards an invariant geometry of double field
  theory}}, \href{https://doi.org/10.1063/1.4795513}{\emph{J. Math. Phys.}
  {\bfseries 54} (2013) 032303},
  [\href{https://arxiv.org/abs/1212.1736}{{\ttfamily 1212.1736}}].

\bibitem{berman2014global}
D.~S. Berman, M.~Cederwall and M.~J. Perry, \emph{{Global aspects of double
  geometry}}, \href{https://doi.org/10.1007/JHEP09(2014)066}{\emph{JHEP}
  {\bfseries 09} (2014) 066},
  [\href{https://arxiv.org/abs/1401.1311}{{\ttfamily 1401.1311}}].

\bibitem{vaisman2012geometry}
I.~Vaisman, \emph{{On the geometry of double field theory}},
  \href{https://doi.org/10.1063/1.3694739}{\emph{J. Math. Phys.} {\bfseries 53}
  (2012) 033509}, [\href{https://arxiv.org/abs/1203.0836}{{\ttfamily
  1203.0836}}].

\bibitem{vaisman2013towards}
I.~Vaisman, \emph{{Towards a double field theory on para-Hermitian manifolds}},
  \href{https://doi.org/10.1063/1.4848777}{\emph{J. Math. Phys.} {\bfseries 54}
  (2013) 123507}, [\href{https://arxiv.org/abs/1209.0152}{{\ttfamily
  1209.0152}}].

\bibitem{Shelton:2005cf}
J.~Shelton, W.~Taylor and B.~Wecht, \emph{{Nongeometric flux
  compactifications}},
  \href{https://doi.org/10.1088/1126-6708/2005/10/085}{\emph{JHEP} {\bfseries
  10} (2005) 085}, [\href{https://arxiv.org/abs/hep-th/0508133}{{\ttfamily
  hep-th/0508133}}].

\bibitem{andriot2012non}
D.~Andriot, O.~Hohm, M.~Larfors, D.~Lust and P.~Patalong, \emph{{Non-Geometric
  Fluxes in Supergravity and Double Field Theory}},
  \href{https://doi.org/10.1002/prop.201200085}{\emph{Fortsch. Phys.}
  {\bfseries 60} (2012) 1150--1186},
  [\href{https://arxiv.org/abs/1204.1979}{{\ttfamily 1204.1979}}].

\bibitem{blumenhagen2012bianchi}
R.~Blumenhagen, A.~Deser, E.~Plauschinn and F.~Rennecke, \emph{{Bianchi
  Identities for Non-Geometric Fluxes - From Quasi-Poisson Structures to
  Courant Algebroids}},
  \href{https://doi.org/10.1002/prop.201200099}{\emph{Fortsch. Phys.}
  {\bfseries 60} (2012) 1217--1228},
  [\href{https://arxiv.org/abs/1205.1522}{{\ttfamily 1205.1522}}].

\bibitem{halmagyi2009non}
N.~Halmagyi, \emph{{Non-geometric Backgrounds and the First Order String Sigma
  Model}},  \href{https://arxiv.org/abs/0906.2891}{{\ttfamily 0906.2891}}.

\bibitem{Chatzistavrakidis:2018ztm}
A.~Chatzistavrakidis, L.~Jonke, F.~S. Khoo and R.~J. Szabo, \emph{{Double Field
  Theory and Membrane Sigma-Models}},
  \href{https://arxiv.org/abs/1802.07003}{{\ttfamily 1802.07003}}.

\bibitem{dructua2013riemannian}
S.~Dru{\c{t}}{\u{a}}-Romaniuc, \emph{Riemannian almost product and
  para-hermitian cotangent bundles of general natural lift type}, {\emph{Acta
  Mathematica Hungarica} {\bfseries 139} (2013) 228--244}.

\bibitem{weinsteingeomodels}
A.~C. Da~Silva and A.~Weinstein, \emph{Geometric models for noncommutative
  algebras}, vol.~10.
\newblock American Mathematical Soc., 1999.

\bibitem{Roytenberg}
D.~Roytenberg, \emph{{Courant algebroids, derived brackets and even symplectic
  supermanifolds}}, {\emph{ArXiv e-prints} (1999) },
  [\href{https://arxiv.org/abs/math/9910078}{{\ttfamily math/9910078}}].

\bibitem{Liu:1995lsa}
Z.-J. Liu, A.~Weinstein and P.~Xu, \emph{{Manin Triples for Lie Bialgebroids}},
  {\emph{J. Diff. Geom.} {\bfseries 45} (1997) 547--574},
  [\href{https://arxiv.org/abs/dg-ga/9508013}{{\ttfamily dg-ga/9508013}}].

\bibitem{Cruceanu}
V.~Cruceanu, P.~Fortuny and P.~Gadea, \emph{A survey on paracomplex geometry},
  \href{https://doi.org/10.1216/rmjm/1181072105}{\emph{Rocky Mountain J. Math.}
  {\bfseries 26} (03, 1996) 83--115}.

\bibitem{vilcu2011hyperhermitian}
G.~E. V{\i}lcu, \emph{Para-hyperhermitian structures on tangent bundles},
  {\emph{Proceedings of the Estonian Academy of Sciences} {\bfseries 60} (2011)
  165--173}.

\bibitem{bilagrangian}
F.~Etayo, R.~Santamar�a and U.~R. Tr�as, \emph{{The geometry of a
  bi-Lagrangian manifold}},
  \href{https://doi.org/10.1016/j.difgeo.2005.07.002}{\emph{Diff. Geom. Appl.}
  {\bfseries 24} (2006) 33--59},
  [\href{https://arxiv.org/abs/math/0403512}{{\ttfamily math/0403512}}].

\bibitem{gadea1991classification}
P.~Gadea and J.~M. Masqu{\'e}, \emph{Classification of almost para-hermitian
  manifolds}, {\emph{Rend. Mat. Appl.(7)} {\bfseries 11} (1991) 377--396}.

\bibitem{bejan1993existence}
C.-L. Bejan, \emph{The existence problem of hyperbolic structures on vector
  bundles}, {\emph{Publ. Inst. Mat. Beograd} {\bfseries 53} (1993) 133--138}.

\bibitem{Ivanov}
S.~Ivanov and S.~Zamkovoy, \emph{{Para hermitian and para quaternionic
  manifolds}},
  \href{https://doi.org/10.1016/j.difgeo.2005.06.002}{\emph{Differ. Geom.
  Appl.} {\bfseries 23} (2005) 205--234},
  [\href{https://arxiv.org/abs/math/0310415}{{\ttfamily math/0310415}}].

\bibitem{homogeneous-parakahler}
D.~V. {Alekseevsky}, C.~{Medori} and A.~{Tomassini}, \emph{{Homogeneous
  para-K\"ahler Einstein manifolds}}, {\emph{ArXiv e-prints} (June, 2008) },
  [\href{https://arxiv.org/abs/0806.2272}{{\ttfamily 0806.2272}}].

\bibitem{Moroianu}
A.~Moroianu, \emph{Lectures on K{\"a}hler geometry}, vol.~69.
\newblock Cambridge University Press, 2007.

\bibitem{geissbuhler2011double}
D.~Geissbuhler, \emph{{Double Field Theory and N=4 Gauged Supergravity}},
  \href{https://doi.org/10.1007/JHEP11(2011)116}{\emph{JHEP} {\bfseries 11}
  (2011) 116}, [\href{https://arxiv.org/abs/1109.4280}{{\ttfamily 1109.4280}}].

\bibitem{aldazabal2011effective}
G.~Aldazabal, W.~Baron, D.~Marques and C.~Nunez, \emph{{The effective action of
  Double Field Theory}}, \href{https://doi.org/10.1007/JHEP11(2011)052,
  10.1007/JHEP11(2011)109}{\emph{JHEP} {\bfseries 11} (2011) 052},
  [\href{https://arxiv.org/abs/1109.0290}{{\ttfamily 1109.0290}}].

\bibitem{berman2014supersymmetry}
D.~S. Berman and K.~Lee, \emph{{Supersymmetry for Gauged Double Field Theory
  and Generalised Scherk-Schwarz Reductions}},
  \href{https://doi.org/10.1016/j.nuclphysb.2014.02.015}{\emph{Nucl. Phys.}
  {\bfseries B881} (2014) 369--390},
  [\href{https://arxiv.org/abs/1305.2747}{{\ttfamily 1305.2747}}].

\bibitem{lee2016towards}
K.~Lee, \emph{{Towards Weakly Constrained Double Field Theory}},
  \href{https://doi.org/10.1016/j.nuclphysb.2016.05.015}{\emph{Nucl. Phys.}
  {\bfseries B909} (2016) 429--457},
  [\href{https://arxiv.org/abs/1509.06973}{{\ttfamily 1509.06973}}].

\bibitem{kosmann:quasi}
Y.~{Kosmann-Schwarzbach}, \emph{{Quasi, twisted, and all that... in Poisson
  geometry and Lie algebroid theory}}, {\emph{ArXiv Mathematics e-prints}
  (Oct., 2003) }, [\href{https://arxiv.org/abs/math/0310359}{{\ttfamily
  math/0310359}}].

\bibitem{gualtieri2017deformation}
M.~Gualtieri, M.~Matviichuk and G.~Scott, \emph{{Deformation of Dirac
  structures via $L_\infty$ algebras}},
  \href{https://arxiv.org/abs/1702.08837}{{\ttfamily 1702.08837}}.

\bibitem{Szabo-paraherm}
V.~E. Marotta and R.~J. Szabo, \emph{{Para-Hermitian Geometry, Dualities and
  Generalized Flux Backgrounds}},
  \href{https://arxiv.org/abs/1810.03953}{{\ttfamily 1810.03953}}.

\bibitem{tangentbundle}
G.~Morandi, C.~Ferrario, G.~Lo~Vecchio, G.~Marmo and C.~Rubano, \emph{{The
  inverse problem in the calculus of variations and the geometry of the tangent
  bundle}}, \href{https://doi.org/10.1016/0370-1573(90)90137-Q}{\emph{Phys.
  Rept.} {\bfseries 188} (1990) 147--284}.

\bibitem{hull2005geometry}
C.~Hull, \emph{{A Geometry for non-geometric string backgrounds}},
  \href{https://doi.org/10.1088/1126-6708/2005/10/065}{\emph{JHEP} {\bfseries
  10} (2005) 065}, [\href{https://arxiv.org/abs/hep-th/0406102}{{\ttfamily
  hep-th/0406102}}].

\end{thebibliography}\endgroup
\end{document}